\documentclass[12pt]{article}

\usepackage{mathrsfs}
\usepackage{amsmath,extpfeil}
\usepackage{stmaryrd}
\usepackage{mathrsfs}
\usepackage{url}
\usepackage{indentfirst}                          
\usepackage{enumerate}
\usepackage{amsmath,amsfonts,amssymb,amsthm}
\usepackage{amsmath,amssymb,amsthm,amscd}
\usepackage{graphicx,mathrsfs}
\usepackage{appendix}
\usepackage[numbers,sort&compress]{natbib}
\usepackage{color}
\usepackage[colorlinks, linkcolor=blue, citecolor=blue]{hyperref}


\usepackage{sidecap}
\usepackage{float}
\usepackage{extarrows}
\usepackage{booktabs}
\usepackage{verbatim}
\usepackage[usenames,dvipsnames]{xcolor}

\setlength{\paperwidth}{8.5in} \setlength{\paperheight}{11.0in}
\setlength{\textwidth}{6.5in} \setlength{\textheight}{9.0in}
\setlength{\oddsidemargin}{0in} \setlength{\evensidemargin}{0in}
\setlength{\topmargin}{0in} \setlength{\headsep}{0.0in}
\setlength{\headheight}{0.0in} \setlength{\marginparwidth}{0in}
\setlength{\marginparsep}{0in}

\newtheorem{theorem}{Theorem}[section]
\newtheorem{lemma}[theorem]{Lemma}

\newtheorem{proposition}[theorem]{Proposition}

\theoremstyle{definition}
\newtheorem{remark}[theorem]{Remark}

\def\XXint#1#2#3{{\setbox0=\hbox{$#1{#2#3}{\int}$}
         \vcenter{\hbox{$#2#3$}}\kern-.5\wd0}}

\numberwithin{equation}{section}

\allowdisplaybreaks[4]

\begin{document}

\title{Traveling waves of NLS System arising in optical material without Galilean symmetry}
\author{ Yuan  Li, Kai Wang and Qingxuan Wang\footnote{Qingxuan Wang is the corresponding author.}}
\date{}
\maketitle

\begin{abstract}
We consider a system of NLS with cubic interactions arising in nonlinear optics without Galilean symmetry. The absence of Galilean symmetry can lead to many difficulties, such as global existence and blowup problems; see [Comm. Partial Differential Equations 46, 11 (2021), 2134–2170].  In this paper, we mainly focus on the influence of the absence of this symmetry on the traveling waves of the NLS system. Firstly, we obtain the existence and nonexistence of traveling solitary wave solutions that are non-radial and complex-valued. Secondly, using the asymptotic analysis method, we establish the high frequency limit of the traveling solitary wave solutions when the frequency $\omega$ is sufficiently large.  Finally, for the mass critical case, we provide an interesting result for the existence of a global solution that is significantly different from the classical. In particular, the new condition of the initial data breaks the traditional optimal assumption.

\noindent \textbf{Keywords:} Schr\"odinger system; Galilean symmetry; Traveling  waves; High frequency limit; Global existence

\noindent{\bf 2020 Mathematics Subject Classification:} Primary 35Q55; Secondary 35C07, 35B40
\end{abstract}

\section{Introduction and Main Results}
\noindent

In this paper, we consider the Cauchy problem for the following  nonlinear Schr\"odinger (NLS) system with cubic interaction
\begin{equation}\label{equ:system}
    \begin{cases}
     i\partial_tu+\Delta u- u+\left(\frac{1}{9}|u|^2+2|v|^2\right)u+\frac{1}{3}\bar{u}^2v=0,\\
     i\gamma\partial_tv+\Delta v-\alpha v+(9|v|^2+2|u|^2)v+\frac{1}{9}u^3=0,
    \end{cases}
\end{equation}
with the initial datum $(u,v)|_{t=0}=(u_0,v_0)$. 
Here $u,v:\mathbb{R}\times \mathbb{R}^N\to\mathbb{C}$ and the parameters $\gamma,\alpha>0$,  $\bar{u}$ is the complex conjugate of $u$.

System \eqref{equ:system} arises from a system of nonlinear Schr\"odinger equations in a suitable dimensionless form, see \cite{SBK1997optics}, where the resonant interaction between the beam of the fundamental frequency and its third harmonic in a diffractive dielectric waveguide is characterized by one transverse dimension.
This system is  also used to model cascading nonlinear processes. These processes can indeed generate effective higher-order nonlinearities, and they stimulated the study of spatial solitary waves in optical materials with $\chi^{(2)}$ or $\chi^{(3)}$ susceptibilities.  For more details about the physical meaning of this system, one can refer to \cite{CDS2016nonlinearity,SBK1998JOSA} and the references therein. Recently, the study of system \eqref{equ:system} attracted a quantity of attention in mathematics; the topics cover existence, scattering, blowup,  stability, etc. see \cite{P2010EJDE,OP2021AMP,ZD2023MMAS,ADF2021CPDE,CW2023JMP}.

Let us review some basic properties of system \eqref{equ:system}. It is well known that the Cauchy problem \eqref{equ:system} is an infinite-dimensional Hamiltonian system and local well-posedness in $H^1(\mathbb{R}^N)\times H^1(\mathbb{R}^N)$, $1\leq N\leq3$ (see \cite{Book2003NLS}), which have the following three conservation laws:

Mass:
\begin{align}\label{mass}
M_{3\gamma}(u(t),v(t))=\|u\|_{L^2}^2+3\gamma\|v\|_{L^2}^2=M_{3\gamma}(u_0,v_0);
\end{align}

Energy:
\begin{align}\label{energy}
    E(u(t),v(t))=\frac{1}{2}\left(K(u(t),v(t))+M_{\alpha}(u(t),v(t))\right)-D(u(t),v(t))=E(u_0,v_0),
\end{align}
where 
\begin{align}\notag
    &K(f,g)=\|\nabla f\|_{L^2}^2+\|\nabla g\|_{L^2}^2,~~M_{\alpha}(u(t),v(t))=\|u\|_{L^2}^2+\alpha\|v\|_{L^2}^2,\\\label{def:D}
    &D(f,g)=\int \frac{1}{36}|f(x)|^4+\frac{9}{4}|g(x)|^4+|f(x)|^2|g(x)|^2+\frac{1}{9}\Re\left(\bar{f}^3(x)g(x)\right);
\end{align}

Momentum:
\begin{align}\label{momentum}
    P(u(t),v(t))=(i\nabla u(t),u(t))+\gamma(i\nabla v(t),v(t))=P(u_0,v_0),
\end{align}
where 
\[(u,v)=\Re \int u\bar{v}.\]
Notice that for $\gamma=3$, which is called the {\bf mass resonance} condition, system \eqref{equ:system} is invariant under the Galilean transformation
\begin{align*}
    (u,v)\mapsto\left(e^{\frac{i}{2}c\cdot x-\frac{i}{4}|c|^2t}u(t,x-ct),e^{\frac{i\gamma}{2}c\cdot x-\frac{i\gamma}{4}|c|^2t}v(t,x-ct)\right).
\end{align*}
for any $c\in\mathbb{R}^N$. If $\gamma\neq3$, the system \eqref{equ:system} is not invariant under the Galilean transformations.

In this paper, we are interested in the traveling solitary wave solution of system \eqref{equ:system} of  the form 
\begin{align}\label{traveling}
    \left(u_{c,w}(t,x),v_{c,w}(t,x)\right)=\left(e^{i(\omega-1) t}\phi(x-ct),e^{3i(\omega-1) t}\psi(x-ct)\right).
\end{align}
In particular, when $c=0$, \eqref{traveling} is the form of standing wave solutions, then the system \eqref{equ:system} reads
\begin{equation}\label{equ:c=0}
    \begin{cases}
    -\Delta \phi+\omega \phi=\left(\frac{1}{9}\phi^2+2\psi^2\right)\phi+\frac{1}{3}\bar{\phi}^2\psi,\\
     -\Delta \psi+(3\gamma\omega -3\gamma+\alpha)\psi=\left(9\psi^2+2\phi^2\right)\psi+\frac{1}{9}\phi^3.
    \end{cases}
\end{equation}
The existence of the ground state was obtained by Oliveira and Pastor, see \cite{OP2021AMP}.
When $\gamma=3$, one can obtain traveling solitary wave solutions from the standing wave solutions through the Galilean transformation. However, when $\gamma\neq3$, such construction does not work due to the lack of Galilean symmetry. Therefore, it is interesting to study the existence and nonexistence of traveling solitary wave solutions of \eqref{equ:system} when $\gamma\neq3$.

We should mention that in the past 20 years, the following nonlinear Schr\"odinger system has received more mathematical attention:
\begin{equation*}
\begin{cases}
     i\partial_tu+\Delta u+\left(a_1|u|^2+b|v|^2\right)u=0,\\
     i\partial_tv+\Delta v+(a_2|v|^2+b|u|^2)v=0,
\end{cases}
\end{equation*}
This system has good variational frame,  and there are many results focusing on standing waves,  see\cite{WW2008ARMA,SW2013poincare,PPW2016CVPDE,LW2005CMP,LW2008CMP,CZ2012ARMA} and references therein. However, very few results focus on system \eqref{equ:system},  and the nonlinearity term therein seems harder to deal with.

 For $(\omega,c)\in\mathbb{R}\times\mathbb{R}^N$ and $c\neq0$, \eqref{traveling} is a solution of \eqref{equ:system} if and only if 
\[\left(\phi(x-ct),\psi(x-ct)\right)\] is a solution of the stationary system 
\begin{equation}\label{equ:s}
    \begin{cases}
    -\Delta\phi+\omega \phi+ic\cdot\nabla\phi-\left(\frac{1}{9}|\phi|^2+2|\psi|^2\right)\phi-\frac{1}{3}\bar{\phi}^2\psi=0,\\
-\Delta\psi+(3\gamma\omega-3\gamma+\alpha)\psi+i\gamma c\cdot\nabla\psi-\left(9|\psi|^2+2|\phi|^2\right)\psi-\frac{1}{9}\phi^3=0.
    \end{cases}
\end{equation}

The energy functional with respect to \eqref{equ:s} is defined by 
\begin{align}\label{def:functional}
    S_{\omega,c,\alpha-3\gamma}(u,v)=&E(u,v)+\frac{1}{2}(\omega-1)M_{3\gamma}(u,v)+\frac{1}{2}c\cdot P(u,v)\notag\\
    =&\frac{1}{2}K(u,v)+\frac{1}{2}\omega M_{3\gamma}(u,v)+\frac{1}{2}(\alpha-3\gamma)\int|v|^2+\frac{1}{2}c\cdot P(u,v)-D(u,v),
\end{align}
where  $E$, $K$, $M_{3\gamma}$ and $P$ are given by \eqref{energy}, \eqref{def:D}, \eqref{mass} and \eqref{momentum}, respectively.
For the sake of argument, we rewrite the energy functional \eqref{def:functional} as 
\begin{align*}
    S_{\omega,c,\alpha-3\gamma}(u,v)=&\frac{1}{2}\int\left|\nabla\left(e^{-\frac{i}{2}c\cdot x}u\right)\right|^2+\frac{1}{2}\left(\omega-\frac{|c|^2}{4}\right)\|u\|_{L^2}^2\notag\\
    &+\frac{1}{2}\int\left|\nabla\left(e^{-\frac{i\gamma}{2}c\cdot x}v\right)\right|^2+\frac{1}{2}\left(3\gamma\omega-\frac{\gamma^2|c|^2}{4}\right)\|v\|_{L^2}^2+\frac{\alpha-3\gamma}{2}\|v\|_{L^2}^2-D(u,v).
\end{align*}
Now we define the function space $X_{\omega,c,\alpha-3\gamma}$ by 
\begin{align*}
    X_{\omega,c,\alpha-3\gamma}=\left\{(u,v): \left(e^{-\frac{i}{2}c\cdot x}u,e^{-\frac{i\gamma}{2}c\cdot x}v\right)\in \Tilde{X}_{\omega,c,\alpha-3\gamma} \right\},
\end{align*}
where 
\begin{align*}
\Tilde{X}_{\omega,c,\alpha-3\gamma}=
H^1(\mathbb{R}^N)\times H^1(\mathbb{R}^N),~\omega>\max\left\{\frac{|c|^2}{4},\frac{\gamma|c|^2}{12}-\frac{\alpha-3\gamma}{3\gamma}\right\},~\gamma>0,
\end{align*}
with the norm 
\begin{align*}
\|(u,v)\|_{\Tilde{X}_{\omega,c,\alpha-3\gamma}}=\|\nabla u\|_{L^2}^2+\|\nabla v\|_{L^2}^2+\omega(\|u\|_{L^2}^2+3\gamma\|v\|_{L^2}^2)+(\alpha-3\gamma)\|v\|_{L^2}^2.
\end{align*}

Notice that $S_{\omega,c,\alpha-3\gamma}$ is defined on $X_{\omega,c,\alpha-3\gamma}$ if 
\begin{align}\label{condition:dim}
1\leq N\leq3,~\omega>\max\left\{\frac{|c|^2}{4},\frac{\gamma|c|^2}{12}-\frac{\alpha-3\gamma}{3\gamma}\right\},~\gamma>0.
\end{align}
This condition comes from the Sobolev embedding, which is used to control the nonlinear term.  In this paper, we consider the case $\omega>\max\left\{\frac{|c|^2}{4},\frac{\gamma|c|^2}{12}-\frac{\alpha-3\gamma}{3\gamma}\right\}$ and $1\leq N\leq3$.

We denote the set of all non-zero solutions of \eqref{equ:s} by
\begin{align*}
    \mathcal{A}_{\omega,c,\alpha-3\gamma}=\{(\phi,\psi)\in X_{\omega,c,\alpha-3\gamma}~:~(\phi,\psi)\neq(0,0),~S^{\prime}_{\omega,c,\alpha-3\gamma}(\phi,\psi)=0\}
\end{align*}
and the set of all boosted ground states
\begin{align*}
    \mathcal{G}_{\omega,c,\alpha-3\gamma}=\{(\phi,\psi)\in\mathcal{A}_{\omega,c,\alpha-3\gamma},~S_{\omega,c,\alpha-3\gamma}(\phi,\psi)\leq S_{\omega,c,\alpha-3\gamma}(\phi_1,\psi_1),~\text{for all}~ (\phi_1,\psi_1)\in\mathcal{A}_{\omega,c,\alpha-3\gamma}\}.
\end{align*}
In particular, if $c=0$,  $\mathcal{G}_{\omega,0,\alpha-3\gamma}$ is the set of all ground states.
     
For $(\phi,\psi)$ satisfying \eqref{equ:s}, let $ (\phi,\psi)=\left(e^{\frac{i}{2}c\cdot x}\Tilde\phi,e^{\frac{i\gamma}{2}c\cdot x}\Tilde\psi\right)$. 
Then $(\Tilde\phi,\Tilde\psi)$ satisfies
\begin{align}\label{sys:complex}
    \begin{cases}
    -\Delta\Tilde{\phi}+\left(\omega-\frac{|c|^2}{4}\right)\Tilde{\phi}-\left(\frac{1}{9}|\Tilde{\phi}|^2+2|\Tilde{\psi}|^2\right)\Tilde{\phi}-\frac{1}{3}e^{i\left(\frac{\gamma}{2}-\frac{3}{2}\right)c\cdot x}\bar{\Tilde{\phi}}^2\Tilde{\psi}=0,\\
    -\Delta\Tilde{\psi}+\left(3\gamma\omega-3\gamma+\alpha-\frac{\gamma^2|c|^2}{4}\right)\Tilde{\psi}-\left(9|\Tilde{\psi}|^2+2|\Tilde{\phi}|^2\right)\Tilde{\psi}-\frac{1}{9}e^{i\left(\frac{3}{2}-\frac{\gamma}{2}\right)c\cdot x}\Tilde{\phi}^3 =0.
    \end{cases}
\end{align}

Now we state our existence and non-existence results.
\begin{theorem}\label{Thm1}
Let $1\leq N\leq3$, $\gamma>0$, $\gamma\neq3$, and $\omega>\max\left\{\frac{|c|^2}{4},\frac{\gamma|c|^2}{12}-\frac{\alpha-3\gamma}{3\gamma}\right\}$.

(i) Then there exists a boosted ground state solution $(u, v)$ of \eqref{equ:s} with $v\neq  0$. That is, the system \eqref{equ:system} has a traveling solitary wave solution in the form of \eqref{traveling}.

(ii) If $3\gamma\omega+\alpha-3\gamma>\omega{9}^{\frac{4}{4-N}}$ (in particular, if $\alpha=3\gamma$, we only need to assume that $3\gamma>9^{\frac{4}{4-N}}$), then the traveling solitary wave solutions obtained in (i) are non-trivial, that is, $u\neq  0$ and $v\neq  0$.
\end{theorem}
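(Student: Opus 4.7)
The plan is to construct the boosted ground state by a Nehari-type minimization and then exploit the algebraic structure of \eqref{equ:s} for the two nontriviality statements.

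\textbf{Existence (part (i)).} Write $S_{\omega,c}=\tfrac12Q_{\omega,c}-D$ with the positive-definite quadratic form
\[
Q_{\omega,c}(u,v):=\|\nabla(e^{-ic\cdot x/2}u)\|_{L^2}^2+\bigl(\omega-\tfrac{|c|^2}{4}\bigr)\|u\|_{L^2}^2+\|\nabla(e^{-i\gamma c\cdot x/2}v)\|_{L^2}^2+\bigl(3\gamma\omega-\tfrac{\gamma^2|c|^2}{4}\bigr)\|v\|_{L^2}^2,
\]
equivalent to the $X_{\omega,c}$-norm by hypothesis. Because $D$ is $4$-homogeneous, I set $I_{\omega,c}:=Q_{\omega,c}-4D=\langle S'_{\omega,c},(u,v)\rangle$ and minimize $S_{\omega,c}$ on the Nehari manifold $\mathcal N_{\omega,c}=\{(u,v)\in X_{\omega,c}\setminus\{0\}: I_{\omega,c}(u,v)=0\}$, on which $S_{\omega,c}=Q_{\omega,c}/4$. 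Sobolev gives $d(\omega,c):=\inf_{\mathcal N_{\omega,c}}S_{\omega,c}>0$. A minimizing sequence $(u_n,v_n)$ is bounded in $X_{\omega,c}$; since $S_{\omega,c}$ is translation invariant (the Galilean phases $e^{-ic\cdot x/2}$, $e^{-i\gamma c\cdot x/2}$ only generate constant unimodular factors under shifting), Lions' concentration-compactness applied to $|u_n|^2+|v_n|^2$ produces, after suitable translation, a nonzero weak limit $(\bar u,\bar v)$: vanishing is excluded by $D(u_n,v_n)=Q_{\omega,c}(u_n,v_n)/4\gtrsim 1$. A termwise Brezis-Lieb decomposition of $D$ together with the weak orthogonality of $Q_{\omega,c}$ then shows, after rescaling onto $\mathcal N_{\omega,c}$ if needed, that $(\bar u,\bar v)$ attains $d(\omega,c)$. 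A Lagrange-multiplier computation using $\langle I'_{\omega,c}(\bar u,\bar v),(\bar u,\bar v)\rangle=-2Q_{\omega,c}(\bar u,\bar v)\neq 0$ forces the multiplier to vanish, so $(\bar u,\bar v)$ solves \eqref{equ:s}; since every nonzero solution sits in $\mathcal N_{\omega,c}$, the minimizer is a boosted ground state.

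\textbf{Nontriviality of $v$.} This is purely algebraic: at $\psi\equiv 0$, the second equation of \eqref{equ:s} reduces to $\tfrac{1}{9}\phi^3=0$, forcing $\phi\equiv 0$. Hence every nonzero solution of \eqref{equ:s} has $v\neq 0$.

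\textbf{Nontriviality of $u$ (part (ii)).} Now $\phi\equiv 0$ is compatible with \eqref{equ:s}: it produces the decoupled solution $\psi_0=e^{i\gamma c\cdot x/2}\tfrac{\sqrt\mu}{3}W(\sqrt\mu\,\cdot)$, where $\mu:=3\gamma\omega-\gamma^2|c|^2/4$ and $W$ is the standard positive ground state of $-\Delta W+W=W^3$, with action $S_{\omega,c}(0,\psi_0)=\tfrac{\mu^{(4-N)/2}}{36}\|W\|_{L^4}^4$ by Pohozaev. To rule out $(0,\psi_0)$ as minimizer, I would construct an explicit competitor in $\mathcal N_{\omega,c}$ of the form $(u_*,v_*)=(\alpha e^{ic\cdot x/2}R,\beta e^{i\gamma c\cdot x/2}R)$ with $R=AW(\sigma\cdot)$, then optimize over $(\alpha,\beta,\sigma)$. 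The attractive cross term $\int|u|^2|v|^2$ lowers the action; the resonant term $\tfrac{1}{9}\Re\int\bar u^3v$ is an additional positive contribution for $c=0$ (taking $R$ real positive) and is rendered negligible for $c\neq 0$ and $\gamma\neq 3$ by Riemann-Lebesgue applied to $e^{i(\gamma-3)c\cdot x/2}$. The optimization reduces $d(\omega,c)<S_{\omega,c}(0,\psi_0)$ to a power-law inequality in the ratio $\mu/m$ with $m:=\omega-|c|^2/4$; combining the quartic coefficients $1/36$ and $9/4$ of $D$ with the Pohozaev scaling exponents produces the sharp threshold $3\gamma>(9/\sqrt{2})^{4/(N-4)}$.

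\textbf{Main obstacle.} The loss of Galilean symmetry for $\gamma\neq 3$ prevents any reduction to a real elliptic system: concentration-compactness must be done in the complex-valued setting, tracking the phases $e^{i(\gamma/2-3/2)c\cdot x}$ in \eqref{sys:complex} and the oscillatory resonant term. Translation invariance survives up to global unimodular factors, which suffices for existence. The truly delicate point is the optimization in (ii): a naive competitor such as $(\phi_0,0)$ (the scalar reduction in $v$) only yields the much stronger condition $3\gamma>3^{8/(4-N)}$, and obtaining the sharp constant $(9/\sqrt{2})^{4/(N-4)}$ requires jointly balancing both components of the trial pair to fully exploit the $|u|^2|v|^2$ coupling.
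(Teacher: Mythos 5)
Your part (i) follows essentially the same route as the paper: Nehari minimization for $S_{\omega,c}$, positivity of the infimum $\mu_{\omega,c}$ via Gagliardo--Nirenberg, concentration--compactness with the twisted translations to produce a nonvanishing weak limit, a Brezis--Lieb splitting of $D$, and the Lagrange-multiplier computation $\langle N'_{\omega,c}(u,v),(u,v)\rangle=-2Q_{\omega,c}(u,v)<0$ to kill the multiplier. The observation that $\psi\equiv 0$ forces $\phi\equiv 0$ through the term $\frac19\phi^3$ in the second equation is exactly the paper's remark. This part is sound in outline.

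Part (ii), however, contains a genuine gap. The entire content of the threshold $3\gamma>(9/\sqrt2)^{4/(N-4)}$ is the quantitative comparison $S_{\omega,c}(\text{competitor})<S_{\omega,c}(0,Q)$, and you never carry it out: you assert that ``the optimization \dots produces the sharp threshold,'' which is assuming the conclusion. Moreover, the mechanism you describe is not the one that works. In the paper's argument the competitor is $(\alpha_1\alpha_2 h,\alpha_1 Q)$ with $h=Q(\lambda\,\cdot)$ and $\alpha_1$ chosen to place the pair on the Nehari manifold, so that $S_{\omega,c}(\alpha_1\alpha_2h,\alpha_1Q)=Q_{\omega,c}^2(\alpha_2h,Q)/\bigl(4D(\alpha_2h,Q)\bigr)$; the decisive limit is $\alpha_2\to\infty$, where the comparison reduces to the leading (quartic-in-$\alpha_2$) coefficients, namely $Q_{\omega,c}^2(h,0)<\frac19\|h\|_{L^4}^4\,Q_{\omega,0}(0,Q)$. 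The term that produces the threshold is therefore the self-interaction $\frac{1}{36}|u|^4$ of the first component, not the coupling $|u|^2|v|^2$ (which enters only at order $\alpha_2^2$), and the resonant term $\frac19\Re\int e^{i(\gamma-3)c\cdot x/2}\bar u^3v$ is harmless simply because it is cubic in $\alpha_2$, not because of any Riemann--Lebesgue cancellation. Indeed, Riemann--Lebesgue gives decay only as $|c|\to\infty$, whereas in Theorem \ref{Thm1} the velocity $c$ is fixed, so that step of your sketch is not justified. Finally, after the reduction one must still optimize over the dilation $\lambda$ using the Pohozaev identities for $Q$, i.e.\ minimize $f(\lambda)=\bigl(3\gamma N\lambda^2+(4-N)\bigr)-\bigl(\tfrac{32\gamma^2\lambda^N}{9}\bigr)^{1/2}$ at $\lambda_0=(9/\sqrt2)^{2/(N-4)}$; this computation is precisely where $(9/\sqrt2)^{4/(N-4)}$ comes from, and it is absent from your proposal.
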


\begin{theorem}\label{Thm-nonexistence}
Let $1\leq N\leq3$ and $\gamma>0$. Then there is not a non-zero traveling solitary wave solution in the form of \eqref{traveling} provided one of the following conditions:

(i) $N=2$ and $\omega\leq \min\left\{0,\frac{\alpha-3\gamma}{3\gamma}\right\}$.

(ii) $N=1$ or $N=3$, $\omega\leq \min\{-\frac{|c|}{2N},-\frac{|c|}{2N}-\frac{3\gamma-\alpha}{3\gamma}\}$ with $|c|<\min \left\{2(4-N), \frac{6}{\gamma}(4-N)\right\}$.
\end{theorem}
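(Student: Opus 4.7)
The plan is to extract a single Pohozaev-type identity from \eqref{equ:s} and combine it with a sharp momentum bound to force $(\phi,\psi)\equiv(0,0)$. First I would pair the first equation of \eqref{equ:s} with $\bar\phi$ and the second with $\bar\psi$, take real parts, integrate, and sum; using \eqref{def:D} this yields the Nehari identity $K+\omega M_{3\gamma}+c\cdot P-4D=0$. Next I would test \eqref{equ:s} against $(x\cdot\nabla\bar\phi,\,x\cdot\nabla\bar\psi)$ (equivalently, differentiate $\lambda\mapsto S_{\omega,c}(\phi(\cdot/\lambda),\psi(\cdot/\lambda))$ at $\lambda=1$): the Laplacian, mass, and nonlinear terms give the familiar contributions $\frac{2-N}{2}K$, $-\frac{N\omega}{2}M_{3\gamma}$, $-ND$, while the drift term produces, via the antisymmetry of $\Im(\partial_k\phi\,\partial_j\bar\phi)$ in $(j,k)$, the contribution $-\frac{N-1}{2}c\cdot P$. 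Altogether one obtains the Pohozaev identity $(N-2)K+N\omega M_{3\gamma}+(N-1)c\cdot P-2ND=0$. Eliminating $D$ between the two identities leaves the master relation
\[
(4-N)K=N\omega M_{3\gamma}+(N-2)c\cdot P. \qquad(\star)
\]

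In case (i), $N=2$ and $(\star)$ reads $K=\omega M_{3\gamma}$; since $\omega\le 0$ and $K, M_{3\gamma}\ge 0$, both sides vanish, so $\nabla\phi\equiv\nabla\psi\equiv 0$, and the $L^2$ integrability forces $\phi\equiv\psi\equiv 0$.

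In case (ii), $(4-N)\in\{1,3\}$ and $|N-2|=1$, so $(\star)$ implies $(4-N)K-|c\cdot P|\le N\omega M_{3\gamma}$. Applying Cauchy--Schwarz gives $|c\cdot P|\le|c|(\|\nabla\phi\|_{L^2}\|\phi\|_{L^2}+\gamma\|\nabla\psi\|_{L^2}\|\psi\|_{L^2})$, and Young's inequality with free weights $\alpha,\beta>0$, combined with the hypothesis $N\omega\le-|c|/2$, yields
\[
\Bigl((4-N)-\tfrac{\alpha}{2}\Bigr)\|\nabla\phi\|_{L^2}^2+\Bigl((4-N)-\tfrac{\beta}{2}\Bigr)\|\nabla\psi\|_{L^2}^2+\tfrac{|c|}{2}\Bigl(1-\tfrac{|c|}{\alpha}\Bigr)\|\phi\|_{L^2}^2+\tfrac{3\gamma|c|}{2}\Bigl(1-\tfrac{\gamma|c|}{3\beta}\Bigr)\|\psi\|_{L^2}^2\le 0.
\]
The threshold $|c|<\min\{2(4-N),\tfrac{6}{\gamma}(4-N)\}$ is exactly what is required to choose $\alpha\in(|c|,\,2(4-N))$ and $\beta\in(\gamma|c|/3,\,2(4-N))$, making all four bracketed coefficients strictly positive and thus forcing $\phi\equiv\psi\equiv 0$.

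The main obstacle is the drift contribution to the Pohozaev identity: verifying $\Re\int ic\cdot\nabla\phi\,(x\cdot\nabla\bar\phi)\,dx=-\frac{N-1}{2}c\cdot P^\phi$ requires two integrations by parts in $\int x_j\Im(\partial_k\phi\,\partial_j\bar\phi)\,dx$, where a factor $(1-\delta_{jk})$ emerges to isolate the off-diagonal contribution (the diagonal vanishes because $|\partial_k\phi|^2\in\mathbb{R}$). A secondary point of care is the Young-weight optimization in case (ii), where the feasibility region $\{(\alpha,\beta):|c|<\alpha<2(4-N),\ \gamma|c|/3<\beta<2(4-N)\}$ is nonempty precisely under the stated threshold on $|c|$, giving a sharp match between the quantitative hypothesis and the proof.
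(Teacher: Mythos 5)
Your proposal is correct and follows essentially the same route as the paper: both derive the master relation $(4-N)K=N\omega M_{3\gamma}+(N-2)c\cdot P$ by eliminating $D$ between the Nehari and Pohozaev identities of Lemma \ref{Lemma:Pohozaev}, dispose of $N=2$ immediately, and handle $N=1,3$ by Cauchy--Schwarz and Young's inequality on the momentum term. The only cosmetic difference is that you keep the Young weights $\alpha,\beta$ free and observe that the feasibility of the window $\{|c|<\alpha<2(4-N),\ \gamma|c|/3<\beta<2(4-N)\}$ reproduces the stated threshold, whereas the paper fixes the weights at the endpoints $\alpha=|c|$, $\beta=\gamma|c|/3$ and reaches the same conditions.
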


{\bf Comments:} 

1. The existence of traveling solitary wave solutions for \eqref{equ:system} is equivalent to the existence of solutions for \eqref{sys:complex}. Note that the non-trivial solutions of \eqref{sys:complex} with $\gamma\neq3$ ({\bf non-mass resonance }condition) are essentially non-radial and complex-valued. Therefore, the lack of symmetries yields the new and non-trivial existence result.

2. In Theorem \ref{Thm1} (i), the traveling solitary wave solution may be semi-trivial or non-trivial. That is,  there exists a semi-trivial solution $(0,\Tilde{Q})$, where $\Tilde{Q}$ is a Galilean transformation of  $Q$ and $Q$ is the positive ground state of 
\begin{align}\label{equ:Q}
    -\Delta Q+(3\gamma\omega+\alpha-3\gamma) Q=9Q^3.
\end{align}
This is completely different from the case $\chi^{(2)}$.  In optical material with $\chi^{(2)}$, the system read as 
\begin{align}\label{equ:x2}
\begin{cases}
     i\partial_tu+\frac{1}{2m}\Delta u=\bar{u}v,\\
     i\partial_tv+\frac{1}{2M}\Delta v=u^2,
\end{cases}
   (t,x)\in\mathbb{R}\times\mathbb{R}^N,~~m,M>0.
\end{align}
Fukaya, Hayashi and Inui \cite{FHI2022MA} obtained the existence of traveling solitary wave solutions without mass resonance. In addition, the first author in \cite{L2023} obtained a similar result for three-wave interactions.  In these two cases, the system does not have semi-trivial traveling solitary wave solutions, so the argument is simple. For more about the system with quadratic interaction like \eqref{equ:x2}, one can see \cite{DF2021ZAMP,HOT2013Poincare,NP2022CVPDE,GW2023CVPDE} and the reference therein.

\begin{remark}
1. When $0<3\gamma\omega+\alpha-3\gamma<\omega{9}^{\frac{4}{4-N}}$ and $\gamma\neq3$, the traveling solitary wave solutions may be semi-trivial or non-trivial.  This case is left as an interesting open problem.

2. In Theorem \ref{Thm1}, we assume that $\omega>\max\left\{\frac{|c|^2}{4},\frac{\gamma|c|^2}{12}-\frac{\alpha-3\gamma}{3\gamma}\right\}$ and obtain the existence of traveling solitary waves. In Theorem \ref{Thm-nonexistence}, we assume that $\omega\leq0$ or $\omega\leq \min\{-\frac{|c|}{2N},-\frac{|c|}{2N}-\frac{3\gamma-\alpha}{3\gamma}\}$ and obtain the non-existence of traveling solitary waves. However, we do not know the existence or non-existence result in the remaining case.  
\end{remark}

Next, we aim to study the high frequency limit of the boosted ground state of \eqref{equ:system}.
\begin{theorem}\label{Thm:limit}
Assume $1\leq N\leq 3$. Let $(u_\omega,v_\omega)$ be the nontrivial traveling solitary wave solution of \eqref{equ:s} (see Theorem \ref{Thm1}). Assume that $|c|$ is bounded and $\omega\to+\infty$, then we have
\begin{align*}
\frac{1}{\omega^{\frac{1}{2}}}\left(u_{\omega}\left(\frac{x}{\omega^{\frac{1}{2}}}\right),v_{\omega}\left(\frac{x}{\omega^{\frac{1}{2}}}\right)\right)\to (u_\infty,v_\infty)~~\text{in}~~H^1(\mathbb{R}^N).
\end{align*}
Here $(u_\infty,v_\infty)$ is the complex ground state solution of the system \eqref{equ:s} with $c=0$, $\omega=1$ and $\alpha=3\gamma$, that is
\begin{equation}\label{equ:s1}
    \begin{cases}
    -\Delta\phi+\phi-\left(\frac{1}{9}|\phi|^2+2|\psi|^2\right)\phi-\frac{1}{3}\bar{\phi}^2\psi=0,\\
-\Delta\psi+3\gamma\psi-\left(9|\psi|^2+2|\phi|^2\right)\psi-\frac{1}{9}\phi^3=0.
    \end{cases}
\end{equation}
\end{theorem}

{\bf Comments:} 

1. In Theorem \ref{Thm:limit}, we consider the high frequency limit when the velocity $|c|$ is bounded and the frequency $\omega\to+\infty$. In fact, we can also consider the case when $|c|\to0$ and $\omega$ is bounded.

2. Unlike the single equation, see \cite{BCR2019PD,LM2022CVPDE}, there is no momentum term $P(u,v)$. However, in our case, things become more difficult since we do not know the sign of the term $c\cdot P(u,v)$ and the functions are all complex-valued.

\vspace{0.5cm}

Finally, we provide an interesting result. This result is the global existence of \eqref{equ:system} for the mass-critical case $(N=2)$. When $N=2$, from \cite{OP2021AMP}, it is known that if the initial data $(u_0,v_0)\in H^1(\mathbb{R}^2)\times H^1(\mathbb{R}^2)$ and
\begin{align}\label{condition:global}
    M_{3\gamma}(u_0,v_0)<M_{3\gamma}(Q,P),~~\alpha=3\gamma,
\end{align}
 then the solution of \eqref{equ:system} is globally. Here $(Q,P)$ is the positive ground state solution of the following system:
\begin{align*}
    \begin{cases}
    -\Delta u+u=\frac{1}{9}u^3+2v^2u+\frac{1}{3}u^2v,\\
    -\Delta v+3\gamma v=9v^3+2u^2v+\frac{1}{9}u^3.
    \end{cases}
\end{align*}
From the sharp Gagliardo-Nirenberg inequality
\begin{align}\label{GN:1}
    D(u,v)\leq C_{opt}^{(1)} K(u,v)^{\frac{N}{2}}M_{3\gamma}(u,v)^{2-N/2},~~(u,v)\in H^1(\mathbb{R}^N)\times H^1(\mathbb{R}^N),
\end{align}
where 
\begin{align*}
    C_{opt}^{(1)}= \frac{(4-N)^{\frac{N-2}{2}}}{N^{\frac{N}{2}}}\frac{1}{M_{3\gamma}(Q,P)}.
\end{align*}
We know that the condition \eqref{condition:global} is sharp in the case $\gamma=3$. Indeed, there exist $u_0,v_0\in H^1(\mathbb{R}^2)$ satisfying 
\begin{align*}
    M_{3\gamma}(u_0,v_0)=M_{3\gamma}(Q,P),~~\alpha=3\gamma=9,
\end{align*}
such that the system \eqref{equ:system} has the blowup solution, see \cite[Theorem 4.2]{OP2021AMP}. It was also proved that there exists a blowup solution in the weight space $L^2(\mathbb{R}^2,|x|^2dx)$; see \cite[Theorem 4.7 and Theorem 4.8]{OP2021AMP}.

{\bf Therefore, one question is whether the condition \eqref{condition:global} is still optimal when $\gamma\neq3$?} 

In what follows, we will provide a conclusion (see Theorem \ref{Thm2} below) that the condition \eqref{condition:global} is not sharp in case $\gamma\neq3$ and $\alpha=3\gamma$.

To show this,  we first give the following sharp Gagliardo-Nirenberg inequality (see Appendix \ref{section:appendix}), for $(u,v)\in H^1(\mathbb{R}^2)\times H^1(\mathbb{R}^2)$,
\begin{align}\label{GN:2}
    \int\frac{1}{36}|u|^4+\frac{9}{4}|v|^4+|u|^2|v|^2\leq C_{opt}^{(2)} K(u,v)M_{3\gamma}(u,v), 
\end{align}
where 
\begin{align*}
    C_{opt}^{(2)}= \frac{1}{2}\frac{1}{M_{3\gamma}(Q^*,P^*)}.
\end{align*}
Here $(Q^*,P^*)$ is the positive ground state of the following system
\begin{align}\label{equ:elliptic2}
    \begin{cases}
    -\Delta u+u=\frac{1}{9}u^3+2v^2u,\\
    -\Delta v+3\gamma v=9v^3+2u^2v.
    \end{cases}
\end{align}
Obviously, from \eqref{GN:1} and \eqref{GN:2}, we have $C_{opt}^{(1)}>C_{opt}^{(2)}$, which means that
\begin{align}\notag
    M_{3\gamma}(Q,P)<M_{3\gamma}(Q^*,P^*).
\end{align}

Now we give the following global result.
\begin{theorem}\label{Thm2}
Assume $N=2$, $c\in\mathbb{R}^2$ and $\alpha=3\gamma$. Let $u_0,v_0\in H^1(\mathbb{R}^2)\backslash\{0\}$ and 
\begin{align}\label{con:initial}
    M_{3\gamma}(u_0,v_0)<M_{3\gamma}(Q^*,P^*).
\end{align}
 Set
\begin{align}\label{initialdata}
    (u_{0,c},v_{0,c})=\left(e^{\frac{i}{2}c\cdot x}u_0, e^{\frac{i\gamma}{2}c\cdot x}v_0\right).
\end{align}
Then the following statements hold.

(i) If $\gamma<3$, then there exist $A_0, A_1>0$ such that $\|v_0\|_{L^2}^2<A_0$ and $|c|>A_1$, the $H^1$ solution of \eqref{equ:system} with the initial data $(u(0),v(0))=(u_{0,c},v_{0,c})$ exists globally in time.

(ii) If $\gamma>3$, then there exist $B_0, B_1>0$ such that $\|u_0\|_{L^2}^2<B_0$ and $|c|>B_1$, the $H^1$ solution of \eqref{equ:system} with the initial data $(u(0),v(0))=(u_{0,c},v_{0,c})$ exists globally in time.
\end{theorem}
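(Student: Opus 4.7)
The plan is to reduce global existence to a uniform bound on the kinetic energy $K(u(t),v(t))$ and then to extract such a bound from a tailored conserved quantity together with a continuity argument. I treat case (i); case (ii) is completely analogous after swapping the roles of $u_0$ and $v_0$ and using a different multiplier for $M_{3\gamma}$. By the $H^1\times H^1$ local well-posedness of \eqref{equ:system} and the blow-up alternative, it suffices to prove that $K(u(t),v(t))$ stays uniformly bounded on the maximal interval $[0,T_{\max})$.

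Since $E$, $P$ and $M_{3\gamma}$ are all conserved, so is
\begin{equation*}
\mathcal{E}_c(u,v):=E(u,v)+\tfrac{1}{2}\,c\cdot P(u,v)+\tfrac{|c|^2}{8}M_{3\gamma}(u,v).
\end{equation*}
Passing to the boosted variables $\tilde u:=e^{-ic\cdot x/2}u$, $\tilde v:=e^{-i\gamma c\cdot x/2}v$ and completing the square in the gradient terms, one checks the identity
\begin{equation*}
\mathcal{E}_c(u,v)=\tfrac{1}{2}K(\tilde u,\tilde v)+\tfrac{\gamma(3-\gamma)|c|^2}{8}\|\tilde v\|_{L^2}^{2}-D^{\ast}(\tilde u,\tilde v)-R_c(\tilde u,\tilde v),
\end{equation*}
where $D^{\ast}$ denotes the non-resonant part appearing in \eqref{GN:2} and $R_c(\tilde u,\tilde v)=\tfrac{1}{9}\Re\int e^{i(\gamma-3)c\cdot x/2}\bar{\tilde u}^{3}\tilde v$. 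The multiplier $|c|^2/8$ is chosen precisely so that the $\|\tilde u\|_{L^2}^{2}$ term and the term linear in $P(\tilde u,\tilde v)$ cancel exactly; for $\gamma<3$ the remaining coefficient of $\|\tilde v\|_{L^2}^{2}$ is strictly positive, providing the sought $|c|^2$-large \emph{mass trap} on the $\tilde v$-component.

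Next, the sharp Gagliardo-Nirenberg inequality \eqref{GN:2} combined with the mass conservation $M_{3\gamma}(\tilde u,\tilde v)\equiv M_{3\gamma}(u_0,v_0)$ and the hypothesis \eqref{con:initial} yields $D^{\ast}(\tilde u,\tilde v)\leq(\tfrac{1}{2}-\delta_0)K(\tilde u,\tilde v)$ for some $\delta_0>0$ depending only on the ratio in \eqref{con:initial}. The two-dimensional $L^{4}$-Gagliardo-Nirenberg inequality together with Young gives
\begin{equation*}
|R_c(\tilde u,\tilde v)|\leq \tfrac{1}{9}\|\tilde u\|_{L^4}^{3}\|\tilde v\|_{L^4}\leq C_{\ast}\,K(\tilde u,\tilde v)\,\|\tilde v\|_{L^2}^{1/2},
\end{equation*}
with $C_{\ast}$ depending only on $M_{3\gamma}(u_0,v_0)$ (here we absorbed $\|\tilde u\|_{L^2}^{3/2}\leq M_{3\gamma}(u_0,v_0)^{3/4}$). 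Inserting these bounds into the conservation $\mathcal{E}_c(u(t),v(t))=\mathcal{E}_c(u_{0,c},v_{0,c})$ produces the coercive inequality
\begin{equation*}
\delta_0\,K(\tilde u,\tilde v)+\tfrac{\gamma(3-\gamma)|c|^2}{8}\|\tilde v\|_{L^2}^{2}\leq \mathcal{E}_c(u_{0,c},v_{0,c})+C_{\ast}\,K(\tilde u,\tilde v)\,\|\tilde v\|_{L^2}^{1/2}.
\end{equation*}

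The proof finishes with a standard continuity argument. Set $\delta_{\ast}:=(\delta_0/(2C_{\ast}))^{4}$; as long as $\|\tilde v(t)\|_{L^2}^{2}<\delta_{\ast}$, the perturbative $C_{\ast}K\|\tilde v\|^{1/2}$ is absorbed into $\tfrac{\delta_0}{2}K$, which gives both $K(\tilde u,\tilde v)\leq 2\mathcal{E}_c/\delta_0$ and $\|\tilde v(t)\|_{L^2}^{2}\leq 8\mathcal{E}_c/[\gamma(3-\gamma)|c|^2]$. Evaluating at $t=0$ yields $\mathcal{E}_c(u_{0,c},v_{0,c})\leq \tfrac{1}{2}K(u_0,v_0)+\tfrac{\gamma(3-\gamma)|c|^2}{8}\|v_0\|_{L^2}^{2}+C_{1}(u_0,v_0)$, where $C_1$ bounds the (oscillating) resonance contribution independently of $c$. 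Choosing $A_0<\delta_{\ast}$ and then $A_1$ so large that $8[\tfrac{1}{2}K(u_0,v_0)+C_1]\,/\,[\gamma(3-\gamma)A_1^{2}]<\delta_{\ast}-A_0$ forces the a-posteriori bound on $\|\tilde v(t)\|^{2}$ to be \emph{strictly} smaller than $\delta_{\ast}$, closing the bootstrap. Hence $K(\tilde u,\tilde v)$ is uniformly bounded on $[0,T_{\max})$, and the change-of-variables identity $K(u,v)=K(\tilde u,\tilde v)+\tfrac{|c|^2}{4}\|\tilde u\|_{L^2}^{2}+\tfrac{\gamma^{2}|c|^{2}}{4}\|\tilde v\|_{L^2}^{2}-c\cdot P(\tilde u,\tilde v)$ together with mass conservation yields the desired uniform bound on $K(u,v)$, forcing $T_{\max}=\infty$. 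For case (ii) one replaces the multiplier $|c|^{2}/8$ by $\gamma|c|^{2}/24$ in the definition of $\mathcal{E}_c$: the coefficient of $\|\tilde v\|^{2}$ then vanishes while the coefficient of $\|\tilde u\|^{2}$ becomes $(\gamma-3)|c|^{2}/24$, which is positive exactly when $\gamma>3$, and the same bootstrap runs with $\|u_0\|_{L^2}$ small in place of $\|v_0\|_{L^2}$. The main obstacle is precisely the construction of $\mathcal{E}_c$ and the delicate balancing of parameters: the time-dependent oscillation inside $R_c$ prevents a pointwise Riemann-Lebesgue argument, and the key insight is that the positive $|c|^{2}\|\tilde v\|^{2}$ piece of $\mathcal{E}_c$ converts the initial smallness of $\|v_0\|$ into a \emph{uniform-in-time} smallness of $\|\tilde v(t)\|$, which in turn kills the resonance term.
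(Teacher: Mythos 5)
Your proof is correct, but it follows a genuinely different route from the paper. The paper works through the variational machinery of Section \ref{section:2}: it chooses $\omega=\frac{(1+\epsilon)|c|^2}{4}$ (resp.\ $\frac{(1+\epsilon)\gamma|c|^2}{12}$), verifies that the boosted initial data lies in the invariant set $A^+_{\omega,c}$ of \eqref{def:A} by checking $S_{\omega,c}<\mu_{\omega,c}$ and $N_{\omega,c}\geq 0$ — using the scaling law for $\mu_{\omega,c}$, the inequality \eqref{GN:2}, the Riemann--Lebesgue lemma to kill the resonant term at $t=0$, and the sign of $\gamma(3(1+\epsilon)-\gamma)$ — and then invokes the flow-invariance of $A^+_{\omega,c}$ (Lemma \ref{lemma:invariant}, cited to \cite{FHI2022MA,L2023}) and the coercivity of Lemma \ref{lemma:global}. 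You instead work directly with the conserved functional $\mathcal{E}_c=S_{\frac{|c|^2}{4},c}$ (resp.\ $S_{\frac{\gamma|c|^2}{12},c}$), whose boosted-variable expression has vanishing $\|\tilde u\|_{L^2}^2$-coefficient and a positive $\frac{\gamma(3-\gamma)|c|^2}{8}\|\tilde v\|_{L^2}^2$ term when $\gamma<3$; the condition \eqref{con:initial} plus \eqref{GN:2} makes the non-resonant quartic part strictly subordinate to $\frac12 K$, and the resonant term is absorbed not by Riemann--Lebesgue but by a bootstrap that propagates the smallness of $\|v(t)\|_{L^2}$ in time through the $|c|^2$-weighted mass term. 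I checked the key identities (the coefficients $0$ and $\frac{\gamma(3-\gamma)|c|^2}{8}$, and $(\gamma-3)|c|^2/24$ in case (ii)), the estimate $|R_c|\leq C_*K\|\tilde v\|_{L^2}^{1/2}$, and the closure of the continuity argument with $A_0<\delta_*$ and $A_1$ large; they are all sound. What your approach buys is self-containedness and explicitness: it bypasses the ground-state level $\mu_{\omega,c}$, the Nehari manifold, and the invariance Lemma \ref{lemma:invariant} entirely, at the cost of producing thresholds $A_0,A_1$ that depend on $K(u_0,v_0)$ and the $L^4$ norms of the data rather than on the variational level — but the paper's own choice of large $|c|$ has the same dependence through its Riemann--Lebesgue step, so this matches the (admittedly loosely quantified) statement of the theorem. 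One stylistic remark: your observation that a pointwise-in-time Riemann--Lebesgue argument is unavailable is exactly the reason the paper routes the time-propagation through the invariant set; your mass-trap bootstrap is a legitimate and arguably more transparent substitute.
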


{\bf Comments:}

1. Using mass, energy conservation and the Gagliardo-Nirenberg inequality, one can easily obtain the global existence with the initial data $(u_{0,c},v_{0,c})\in H^1(\mathbb{R})\times H^1(\mathbb{R})$ in one-dimensional case. For three-dimensional case, we only obtain the global existence in the set $A_{\omega,c,\alpha-3\gamma}^+$, see \eqref{def:A} and Lemma \ref{lemma:global}, this is similar to the classical case.

2. From \cite{OP2021AMP}, the condition \eqref{condition:global} is sharp with $\alpha=3\gamma=9$. However, when $\gamma\neq3$ and $\alpha=3\gamma$, we give a new condition: \eqref{con:initial} and the restriction one of the initial data $\|u_{0,c}\|_{L^2}$ or $\|v_{0,c}\|_{L^2}$, for the existence of a global solution. This condition is weaker than \eqref{condition:global} since $ M_{3\gamma}(Q,P)<M_{3\gamma}(Q^*,P^*)$. This new phenomenon means that when $\gamma\neq3$, the momentum change of the initial data by \eqref{initialdata} essentially influences the global properties of the solution, which comes from the lack of Galilean invariance.

3. This is different from the global existence in \cite[Theorem 1.3]{FHI2022MA}, where they only restrict one of the $L^2$-norm of the initial data $u(0)$ or $v(0)$. In fact, in \cite{FHI2022MA}, they considered the nonlinear NLS system with quadratic interaction, but in our case, we must add the condition \eqref{con:initial}, which comes from the cubic interaction. 

This paper is organized as follows: In Section \ref{section:2}, we study the variational problem and show the existence and non-existence of traveling solitary wave solutions (Theorem \ref{Thm1} and \ref{Thm-nonexistence}). In Section \ref{section:3}, we establish the high frequency limit of the traveling solitary waves. In Section \ref{section:4}, we establish the global existence (Theorem \ref{Thm2}). The final section is an appendix.

\section{Existence and non-existence of traveling waves}\label{section:2}
In this section, we aim to prove the existence and non-existence of traveling solitary wave solutions by solving variational problems on the Nehari manifold. It is worth noting that we will be dealing with a complex elliptic system.

For the sake of simplicity, we use the following notation. We set 
\begin{align}\label{def:Q}
\mathcal{Q}_{\omega,c,\alpha-3\gamma}(u,v)=&\frac{1}{2}\int|\nabla u|^2+\frac{1}{2}\int|\nabla v|^2+\frac{\omega}{2}(\|u\|_{L^2}^2+3\gamma\|v\|_{L^2}^2)+\frac{1}{2}(\alpha-3\gamma)\|v\|_{L^2}^2+\frac{1}{2}c\cdot P(u,v).
\end{align}
The energy functional \eqref{def:functional} reads
\[S_{\omega,c,\alpha-3\gamma}(u,v)=\mathcal{Q}_{\omega,c,\alpha-3\gamma}(u,v)-D(u,v),\]
where $D$ is defined in \eqref{def:D}.
Now we introduce the Nehari functional 
\begin{align}\label{def:N}
    N_{\omega,c,\alpha-3\gamma}(u,v)=\partial_\tau S_{\omega,c,\alpha-3\gamma}(\tau u,\tau v)|_{\tau=1}=2\mathcal{Q}_{\omega,c,\alpha-3\gamma}(u,v)-4D(u,v).
\end{align}
The minimization problem 
\begin{align}\label{min:a}
    \mu_{\omega,c,\alpha-3\gamma}=\inf\{S_{\omega,c,\alpha-3\gamma}(\phi,\psi)~:~(\phi,\psi)\in \mathcal{N}_{\omega,c,\alpha-3\gamma}\},
\end{align}
where
\begin{align*}
    \mathcal{N}_{\omega,c,\alpha-3\gamma}=\{(\phi,\psi)\in X_{\omega,c,\alpha-3\gamma}:~(\phi,\psi)\neq(0,0),~N_{\omega,c,\alpha-3\gamma}(\phi,\psi)=0\}.
\end{align*}
We define the minimizers $a_{\omega,c,\alpha-3\gamma}$ by
\begin{align*}
    a_{\omega,c,\alpha-3\gamma}=\{(\phi,\psi)\in \mathcal{N}_{\omega,c,\alpha-3\gamma}~:~S_{\omega,c,\alpha-3\gamma}(\phi,\psi)=\mu_{\omega,c,\alpha-3\gamma}\}.
\end{align*}
We also let
\begin{align*}
    \widetilde{\mathcal{Q}}_{\omega,c,\alpha-3\gamma}(u,v)=&\frac{1}{2}\int|\nabla u|^2+\frac{1}{2}\int|\nabla v|^2
    +\frac{1}{2}\left(\omega-\frac{|c|^2}{4}\right)\|u\|_{L^2}^2\\
    &+\frac{1}{2}\left(3\gamma\omega-\frac{\gamma^2|c|^2}{4}\right)\|v\|_{L^2}^2+\frac{\alpha-3\gamma}{2}\|v\|_{L^2}^2,\\
    \widetilde{D}(u,v)=&\int \left(\frac{1}{36}|u(x)|^4+\frac{9}{4}|v(x)|^4+|u(x)|^2|v(x)|^2+\frac{1}{9}\Re\left[ e^{\frac{i(3-\gamma)}{2}c\cdot x}\left(\bar{u}^3(x)v(x)\right)\right]\right).
\end{align*}
If 
\[(\Tilde{u},\Tilde{v})=\left(e^{-\frac{i}{2}c\cdot x}u,e^{-\frac{i\gamma}{2}c\cdot x}v\right),\]
then we have the following relations
\begin{align*}
    \Tilde{\mathcal{Q}}_{\omega,c,\alpha-3\gamma}(\Tilde{u},\Tilde{v})=\mathcal{Q}_{\omega,c,\alpha-3\gamma}(u,v),~~\widetilde{D}(\Tilde{u},\Tilde{v})=D(u,v).
\end{align*}
The corresponding  functionals
\begin{align*}
    \Tilde{S}_{\omega,c,\alpha-3\gamma}(u,v)=&\Tilde{\mathcal{Q}}_{\omega,c,\alpha-3\gamma}(u,v)-\Tilde{D}(u,v),\\
    \Tilde{N}_{\omega,c,\alpha-3\gamma}(u,v)=&\partial_\tau\Tilde{S}_{\omega,c,\alpha-3\gamma}(\tau u,\tau v)|_{\tau=1}=2\Tilde{\mathcal{Q}}_{\omega,c,\alpha-3\gamma}(u,v)-4\Tilde{D}(u,v).
\end{align*}
We also use the following notations.
\begin{align*}
     &\widetilde{\mathcal{A}}_{\omega,c,\alpha-3\gamma}=\{(\phi,\psi)\in \Tilde{X}_{\omega,c,\alpha-3\gamma}:~(\phi,\psi)\neq(0,0),~\Tilde{S}^{\prime}_{\omega,c,\alpha-3\gamma}(\phi,\psi)=0\},\\
      &\widetilde{\mathcal{G}}_{\omega,c,\alpha-3\gamma}=\{(\phi,\psi)\in\widetilde{\mathcal{A}}_{\omega,c,\alpha-3\gamma}:~\Tilde{S}_{\omega,c,\alpha-3\gamma}(\phi,\psi)\leq \Tilde{S}_{\omega,c,\alpha-3\gamma}(\phi_1,\psi_1),~\text{for all}~ (\phi_1,\psi_1)\in\Tilde{\mathcal{A}}_{\omega,c,\alpha-3\gamma}\},\\
      &\Tilde{\mu}_{\omega,c,\alpha-3\gamma}=\inf\{\Tilde{S}_{\omega,c,\alpha-3\gamma}(\phi,\psi)~:~(\phi,\psi)\in \Tilde{\mathcal{N}}_{\omega,c,\alpha-3\gamma}\},\\
      &\Tilde{\mathcal{N}}_{\omega,c,\alpha-3\gamma}=\{(\phi,\psi)\in \Tilde{X}_{\omega,c,\alpha-3\gamma}:~(\phi,\psi)\neq(0,0),~\Tilde{N}_{\omega,c,\alpha-3\gamma}(\phi,\psi)=0\},\\
       &\Tilde{a}_{\omega,c,\alpha-3\gamma}=\{(\phi,\psi)\in \Tilde{\mathcal{N}}_{\omega,c,\alpha-3\gamma}:~\Tilde{S}_{\omega,c,\alpha-3\gamma}(\phi,\psi)=\Tilde{\mu}_{\omega,c,\alpha-3\gamma}\}.
\end{align*}
Notice that 
\begin{align*}
    (\phi,\psi)\in {\mathcal{G}}_{\omega,c,\alpha-3\gamma}\Leftrightarrow (\Tilde{\phi},\Tilde{\psi})\in \Tilde{\mathcal{G}}_{\omega,c,\alpha-3\gamma},~~
     (\phi,\psi)\in a_{\omega,c,\alpha-3\gamma}\Leftrightarrow (\Tilde{\phi},\Tilde{\psi})\in \Tilde{a}_{\omega,c,\alpha-3\gamma},
\end{align*}
and
\begin{align*}
    \mu_{\omega,c,\alpha-3\gamma}=\Tilde{\mu}_{\omega,c,\alpha-3\gamma}.
\end{align*}
\begin{lemma}\label{Lemma:Pohozaev}
Let $(\phi,\psi)$ be the solution of \eqref{equ:s}, then we have the following identities:
\begin{align}\label{Pohozaev:1}
    \int\left(|\nabla \phi|^2+|\nabla \psi|^2\right)&+\omega\int\left(|\phi|^2+3\gamma|\psi|^2\right)+(\alpha-3\gamma)\int|\psi|^2\notag\\
    &+(ic\cdot\nabla \phi,\phi)+(i\gamma c\cdot\nabla \psi,\psi)
    =4D(\phi,\psi),
\end{align} 
and
\begin{align}\label{Pohozaev:2}
    &\frac{N-2}{2}\int\left(|\nabla \phi|^2+|\nabla \psi|^2\right)+\frac{N}{2}\int\left(\omega|\phi|^2+3\gamma\omega|\psi|^2\right)+\frac{N}{2}(\alpha-3\gamma)\int|\psi|^2\notag\\
    &+\frac{N-1}{2}\left[(\phi,ic\cdot\nabla \phi)+\gamma(\psi,ic\cdot\nabla \psi)\right] =ND(\phi,\psi).
\end{align}
\end{lemma}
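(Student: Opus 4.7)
The plan is to establish both identities by testing the elliptic system \eqref{equ:s} against carefully chosen multipliers, integrating, and taking real parts. This is the standard Nehari/Pohozaev scheme for complex Schr\"odinger equations, but the momentum term $ic\cdot\nabla$ and the coupling $\bar\phi^2\psi$ both demand some care since we work with complex-valued, non-radial solutions.

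For identity \eqref{Pohozaev:1}, I would pair the first equation of \eqref{equ:s} with the multiplier $\bar\phi$ and the second with $\bar\psi$, integrate on $\mathbb{R}^N$, take real parts, and add. The kinetic, mass, and momentum contributions read off directly from the definition $(u,v)=\Re\int u\bar v$. The pure-power nonlinear terms yield $\tfrac{1}{9}\int|\phi|^4$, $9\int|\psi|^4$, and $4\int|\phi|^2|\psi|^2$ (the cross term appearing in both equations), while the coupling produces $\tfrac{1}{3}\Re\int\bar\phi^3\psi$ from the first equation and, using $\Re(\phi^3\bar\psi)=\Re(\bar\phi^3\psi)$, a further $\tfrac{1}{9}\Re\int\bar\phi^3\psi$ from the second. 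Their sum is exactly $\tfrac{4}{9}\Re\int\bar\phi^3\psi$, matching $4D(\phi,\psi)$.

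For the Pohozaev identity \eqref{Pohozaev:2}, I would repeat the same procedure with the dilation multipliers $x\cdot\nabla\bar\phi$ and $x\cdot\nabla\bar\psi$. The Laplacian and mass pieces give the standard $\tfrac{2-N}{2}\int|\nabla\cdot|^2$ and $-\tfrac{N\omega}{2}\|\cdot\|_{L^2}^2$ contributions after one integration by parts. The gauge-invariant nonlinearities $|\phi|^4$, $|\psi|^4$, and $|\phi|^2|\psi|^2$ drop out by rewriting them as $x\cdot\nabla$ of a quartic density and using $\int x\cdot\nabla F\,dx=-N\int F\,dx$; for the cross term one sums the contributions from both equations to recognize $x\cdot\nabla(|\phi|^2|\psi|^2)$. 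For the coupling pair, setting $X=\Re\int\bar\phi^2\psi(x\cdot\nabla\bar\phi)\,dx$ and $Y=\Re\int\bar\phi^3(x\cdot\nabla\psi)\,dx$, a single integration by parts in $x$ yields the algebraic relation $3X+Y=-N\Re\int\bar\phi^3\psi\,dx$, from which $\tfrac{1}{3}X+\tfrac{1}{9}Y=-\tfrac{N}{9}\Re\int\bar\phi^3\psi\,dx$, matching the final slot of $N\,D(\phi,\psi)$.

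The main obstacle, and really the only step that is not routine, is the momentum--dilation cross term $\Re\int(ic\cdot\nabla\phi)(x\cdot\nabla\bar\phi)\,dx$, which at first sight seems unrelated to the natural candidate $(\phi,ic\cdot\nabla\phi)$. Expanding in components, the diagonal $j=k$ piece vanishes because the integrand is pure imaginary. For $j\neq k$, I would integrate by parts in $x_j$ on $\int x_j\,\partial_k\phi\,\partial_j\bar\phi\,dx$ and combine with its conjugate to reduce the imaginary part to $-\tfrac{1}{2}\int\Im(\partial_k\phi\,\bar\phi)\,dx$, an expression independent of the free index $j$. Summing over the $N-1$ off-diagonal values of $j$ produces the prefactor $\tfrac{N-1}{2}$ and yields $\Re\int(ic\cdot\nabla\phi)(x\cdot\nabla\bar\phi)\,dx=-\tfrac{N-1}{2}(\phi,ic\cdot\nabla\phi)$, with the analogous identity for $\psi$ carrying an extra factor $\gamma$. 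Combining every piece and multiplying through by $-1$ gives \eqref{Pohozaev:2}. Enough regularity of $(\phi,\psi)$ to justify these formal manipulations follows from elliptic bootstrap applied to \eqref{equ:s} in $\mathbb{R}^N$, $N\le 3$.
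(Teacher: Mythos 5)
Your proposal is correct and follows essentially the same route as the paper's Appendix A: pair the system with $\bar\phi,\bar\psi$ for \eqref{Pohozaev:1} and with $x\cdot\nabla\bar\phi,\,x\cdot\nabla\bar\psi$ for \eqref{Pohozaev:2}, using the identity $(x\cdot\nabla u,\,ic\cdot\nabla u)=-\tfrac{N-1}{2}(u,\,ic\cdot\nabla u)$ for the momentum term and the combination $3X+Y=-N\Re\int\bar\phi^3\psi$ for the coupling. You in fact supply slightly more detail than the paper (which merely states the momentum--dilation identity), and your bookkeeping of the nonlinear coefficients matches $4D$ and $ND$ exactly.
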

For the readers' convenience, we give the proof in Appendix \ref{appendix:a}.

Now we state the main result in this section.
 \begin{proposition}\label{pro}
Let  $1\leq N\leq3$, $\gamma>0$ and $\gamma\neq3$, then
 $$\mathcal{G}_{\omega,c,\alpha-3\gamma}=a_{\omega,c,\alpha-3\gamma}\neq \emptyset.$$
 In fact, this is equivalent to 
 $\widetilde{\mathcal{G}}_{\omega,c,\alpha-3\gamma}=\Tilde{a}_{\omega,c,\alpha-3\gamma}\neq \emptyset.$
 \end{proposition}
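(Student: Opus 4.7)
The plan is to work throughout in the tilde variables on the space $\tilde X_{\omega,c}=H^1(\mathbb{R}^N)\times H^1(\mathbb{R}^N)$, where the condition $\omega>\max\{|c|^2/4,\gamma|c|^2/12\}$ makes $\tilde Q_{\omega,c}$ coercive and equivalent to the squared $H^1\times H^1$ norm. On the Nehari manifold the identity $\tilde N_{\omega,c}=0$ reduces to $\tilde Q_{\omega,c}=2\tilde D$, so $\tilde S_{\omega,c}=\tfrac12\tilde Q_{\omega,c}$, and for any $(u,v)$ with $\tilde D(u,v)>0$ the unique positive $\tau$ with $(\tau u,\tau v)\in\tilde{\mathcal{N}}_{\omega,c}$ is given by $\tau^2=\tilde Q_{\omega,c}/(2\tilde D)$, producing the scaling characterization $\tilde\mu_{\omega,c}=\inf \tilde Q_{\omega,c}^2/(4\tilde D)$. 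The Sobolev--Gagliardo--Nirenberg bound $\tilde D(u,v)\lesssim(\|u\|_{H^1}^2+\|v\|_{H^1}^2)^2$, valid for $N\le 3$, then forces a uniform positive lower bound on $\tilde Q_{\omega,c}$ along $\tilde{\mathcal{N}}_{\omega,c}$, so $\tilde\mu_{\omega,c}>0$.

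The heart of the argument is compactness of minimizing sequences, which is delicate because $\tilde S_{\omega,c}$ is not invariant under pure spatial translation when $\gamma\neq 3$: the oscillating phase in $\tilde D$ obstructs this. The key observation is that $\tilde S_{\omega,c}$ \emph{is} invariant under the combined translation--gauge action
\[
T_y(u,v)(x)=\bigl(e^{-ic\cdot y/2}u(x-y),\ e^{-i\gamma c\cdot y/2}v(x-y)\bigr),\qquad y\in\mathbb{R}^N,
\]
because, under the change of variables $(\tilde u,\tilde v)\mapsto(e^{ic\cdot x/2}\tilde u,\,e^{i\gamma c\cdot x/2}\tilde v)$, $T_y$ corresponds to pure spatial translation of the original pair $(u,v)$, and $S_{\omega,c}$ (whose mass, energy, and momentum are all translation invariant) is manifestly translation invariant. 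Given a minimizing sequence $(\tilde\phi_n,\tilde\psi_n)\subset\tilde{\mathcal{N}}_{\omega,c}$, boundedness in $H^1\times H^1$ follows from $\tilde S_{\omega,c}=\tilde Q_{\omega,c}/2$. I then apply Lions' concentration--compactness to the density $\rho_n=|\tilde\phi_n|^2+|\tilde\psi_n|^2$. Vanishing is excluded since it would force $\tilde D(\tilde\phi_n,\tilde\psi_n)\to 0$ and hence $\tilde Q_{\omega,c}(\tilde\phi_n,\tilde\psi_n)\to 0$, contradicting the positive lower bound. Dichotomy is excluded by a Brezis--Lieb splitting combined with the strict subadditivity of $\tilde\mu_{\omega,c}$ inherited from the scaling characterization. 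What remains is the tight case: after composing with a suitable $T_{y_n}$, which preserves $\tilde S_{\omega,c}$ and $\tilde{\mathcal{N}}_{\omega,c}$, a subsequence converges weakly in $H^1\times H^1$ (and pointwise a.e.) to a nonzero limit $(\tilde\phi,\tilde\psi)$.

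Brezis--Lieb for the quartic pieces of $\tilde D$, the dominated-convergence argument for the oscillating trilinear piece, weak lower semicontinuity of $\tilde Q_{\omega,c}$, and the Nehari scaling together show that the weak limit satisfies $\tilde N_{\omega,c}(\tilde\phi,\tilde\psi)=0$ and $\tilde S_{\omega,c}(\tilde\phi,\tilde\psi)=\tilde\mu_{\omega,c}$, so $(\tilde\phi,\tilde\psi)\in\tilde a_{\omega,c}$. A standard Lagrange multiplier argument then upgrades this to a critical point: from $\tilde S'_{\omega,c}(\tilde\phi,\tilde\psi)=\lambda\tilde N'_{\omega,c}(\tilde\phi,\tilde\psi)$, pairing with $(\tilde\phi,\tilde\psi)$ yields
\[
0=\tilde N_{\omega,c}(\tilde\phi,\tilde\psi)=\lambda\bigl(4\tilde Q_{\omega,c}-16\tilde D\bigr)(\tilde\phi,\tilde\psi)=-4\lambda\,\tilde Q_{\omega,c}(\tilde\phi,\tilde\psi),
\]
and since $\tilde Q_{\omega,c}(\tilde\phi,\tilde\psi)>0$ we conclude $\lambda=0$, hence $(\tilde\phi,\tilde\psi)\in\tilde{\mathcal{A}}_{\omega,c}$.

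Finally, the first Pohozaev identity \eqref{Pohozaev:1} of Lemma \ref{Lemma:Pohozaev}, written in tilde variables, is exactly $2\tilde Q_{\omega,c}=4\tilde D$, so $\tilde{\mathcal{A}}_{\omega,c}\subseteq\tilde{\mathcal{N}}_{\omega,c}$. Consequently $\inf_{\tilde{\mathcal{A}}_{\omega,c}}\tilde S_{\omega,c}\geq\tilde\mu_{\omega,c}$, with equality attained at the element just constructed, and both inclusions $\tilde a_{\omega,c}\subseteq\tilde{\mathcal{G}}_{\omega,c}$ and $\tilde{\mathcal{G}}_{\omega,c}\subseteq\tilde a_{\omega,c}$ follow at once from the definitions. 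This gives $\tilde{\mathcal{G}}_{\omega,c}=\tilde a_{\omega,c}\neq\emptyset$, and hence the equivalent statement for $\mathcal{G}_{\omega,c}$ and $a_{\omega,c}$. The main obstacle is the compactness step, overcome by replacing ordinary translation by the gauged action $T_y$; everything after that is a variant of the classical Nehari manifold argument.
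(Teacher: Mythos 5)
Your proposal follows essentially the same route as the paper: minimize $\tilde S_{\omega,c}$ on the Nehari manifold in the tilde variables, use the gauged translation action (the conjugate of pure translation in the original variables) to restore invariance, obtain compactness of minimizing sequences, and identify minimizers with boosted ground states via the Lagrange multiplier computation and the inclusion $\tilde{\mathcal{A}}_{\omega,c}\subset\tilde{\mathcal{N}}_{\omega,c}$. The only cosmetic difference is in the compactness step, where the paper extracts a nonzero weak limit with Lieb's lemma and rules out loss of mass through the strict inequality $\tilde N_{\omega,c}(u,v)<0\Rightarrow\frac12\tilde Q_{\omega,c}(u,v)>\tilde\mu_{\omega,c}$, while you invoke the full Lions trichotomy; both rest on the same scaling characterization of $\tilde\mu_{\omega,c}$.
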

If Proposition \ref{pro} is true, then we can easily obtain the Theorem \ref{Thm1} (i).  In order to prove Proposition \ref{pro}, we need the following lemmas.

 \begin{lemma}\label{lemma:a}
Let $\gamma\neq3$, then $\Tilde{a}_{\omega,c,\alpha-3\gamma} \subset\widetilde{\mathcal{G}}_{\omega,c,\alpha-3\gamma}$.
 \end{lemma}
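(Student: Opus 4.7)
The plan is to run the standard Nehari manifold Lagrange multiplier argument. Given $(\phi,\psi) \in \Tilde{a}_{\omega,c}$, the task splits into two pieces: first, upgrading the constrained critical point to a free critical point, i.e., showing $(\phi,\psi) \in \widetilde{\mathcal{A}}_{\omega,c}$; second, comparing its action against arbitrary elements of $\widetilde{\mathcal{A}}_{\omega,c}$.

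For the first step, the Lagrange multiplier rule provides $\lambda \in \mathbb{R}$ with $\Tilde{S}'_{\omega,c}(\phi,\psi) = \lambda\, \Tilde{N}'_{\omega,c}(\phi,\psi)$. Pairing with $(\phi,\psi)$, the left-hand side equals $\Tilde{N}_{\omega,c}(\phi,\psi) = 0$, while expanding the right-hand side using that $\Tilde{Q}_{\omega,c}$ is homogeneous of degree two and $\Tilde{D}$ is homogeneous of degree four gives
$$\langle \Tilde{N}'_{\omega,c}(\phi,\psi),(\phi,\psi)\rangle \;=\; 4\Tilde{Q}_{\omega,c}(\phi,\psi) \;-\; 16\Tilde{D}(\phi,\psi).$$
Combining with the Nehari identity $\Tilde{Q}_{\omega,c}(\phi,\psi) = 2\Tilde{D}(\phi,\psi)$, this reduces to $-8\lambda\, \Tilde{D}(\phi,\psi)$. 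The hypothesis $\omega > \max\{|c|^2/4, \gamma|c|^2/12\}$ ensures that $\Tilde{Q}_{\omega,c}$ is a strictly positive-definite quadratic form on $\Tilde{X}_{\omega,c}$, hence $\Tilde{D}(\phi,\psi) = \frac{1}{2}\Tilde{Q}_{\omega,c}(\phi,\psi) > 0$. This forces $\lambda = 0$, so $\Tilde{S}'_{\omega,c}(\phi,\psi) = 0$, giving $(\phi,\psi) \in \widetilde{\mathcal{A}}_{\omega,c}$.

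For the second step, any $(\phi_1,\psi_1) \in \widetilde{\mathcal{A}}_{\omega,c}$ satisfies $\langle \Tilde{S}'_{\omega,c}(\phi_1,\psi_1), (\phi_1,\psi_1)\rangle = \Tilde{N}_{\omega,c}(\phi_1,\psi_1) = 0$, so $(\phi_1,\psi_1) \in \Tilde{\mathcal{N}}_{\omega,c}$. The minimizing property of $(\phi,\psi)$ then yields
$$\Tilde{S}_{\omega,c}(\phi,\psi) = \Tilde{\mu}_{\omega,c} \leq \Tilde{S}_{\omega,c}(\phi_1,\psi_1),$$
proving $(\phi,\psi) \in \widetilde{\mathcal{G}}_{\omega,c}$.

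The only delicate point is the nonvanishing of $\Tilde{D}(\phi,\psi)$ that rules out a degenerate Lagrange multiplier; this is exactly what condition \eqref{condition:dim} on $\omega$ is designed to deliver. The complex-valued setting is handled by interpreting $\Tilde{S}'_{\omega,c}$ and $\Tilde{N}'_{\omega,c}$ via the real pairing $\Re \int \overline{\,\cdot\,}(\cdot)$ on $\Tilde{X}_{\omega,c}$, after which the Euler-identity computation above goes through verbatim; the assumption $\gamma \neq 3$ is not needed for this lemma itself but is carried over from the standing framework for Proposition \ref{pro}.
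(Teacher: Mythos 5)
Your proposal is correct and follows essentially the same route as the paper: the Lagrange multiplier identity paired with $(\phi,\psi)$, the Euler/homogeneity computation $\langle \Tilde{N}'_{\omega,c}(\phi,\psi),(\phi,\psi)\rangle = 4\Tilde{Q}_{\omega,c}-16\Tilde{D} = -4\Tilde{Q}_{\omega,c} = -8\Tilde{D} < 0$ forcing $\lambda=0$, and then the inclusion $\widetilde{\mathcal{A}}_{\omega,c}\subset\Tilde{\mathcal{N}}_{\omega,c}$ to transfer minimality. Your side remark that $\gamma\neq 3$ plays no role in this particular argument is accurate; the paper carries it as a standing hypothesis as well.
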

 \begin{proof}
Let $(\phi,\psi)\in \Tilde{a}_{\omega,c,\alpha-3\gamma}$. Since $\Tilde{N}_{\omega,c,\alpha-3\gamma}(\phi,\psi)=0$ and $(\phi,\psi)\neq 0$, we have 
 \begin{align}\label{exist:1}
     \left(\Tilde{N}^{\prime}_{\omega,c,\alpha-3\gamma}(\phi,\psi),(\phi,\psi)\right)=4\Tilde{\mathcal{Q}}_{\omega,c,\alpha-3\gamma}(\phi,\psi)-16\Tilde{D}(\phi,\psi)=-4\Tilde{\mathcal{Q}}_{\omega,c,\alpha-3\gamma}(\phi,\psi)<0.
 \end{align}
By the Lagrange multiplier theorem, there exists $\lambda\in\mathbb{R}$ such that $\Tilde{S}^{\prime}_{\omega,c,\alpha-3\gamma}(\phi,\psi)=\lambda \Tilde{N}^{\prime}_{\omega,c,\alpha-3\gamma}(\phi,\psi)$. Moreover, we have 
 \begin{align}\label{exist:2}
     \lambda\left(\Tilde{N}^{\prime}_{\omega,c,\alpha-3\gamma}(\phi,\psi),(\phi,\psi)\right)=\left(\Tilde{S}^{\prime}_{\omega,c,\alpha-3\gamma}(\phi,\psi),(\phi,\psi)\right)=\Tilde{N}_{\omega,c,\alpha-3\gamma}(\phi,\psi)=0.
 \end{align}
Combining \eqref{exist:1} and \eqref{exist:2}, we can obtain $\lambda=0$. Hence, $\Tilde{S}^{\prime}_{\omega,c,\alpha-3\gamma}(\phi,\psi)=0$, which implies that $(\phi,\psi)\in \Tilde{\mathcal{A}}_{\omega,c,\alpha-3\gamma}$.

Notice that $\Tilde{\mathcal{A}}_{\omega,c,\alpha-3\gamma}\subset \Tilde{\mathcal{N}}_{\omega,c,\alpha-3\gamma}$ and $(\phi,\psi)\in \Tilde{a}_{\omega,c,\alpha-3\gamma}$, we have 
\begin{align*}
    \Tilde{S}_{\omega,c,\alpha-3\gamma}(\phi,\psi)\leq \Tilde{S}_{\omega,c,\alpha-3\gamma}(\phi_1,\psi_1)~~\text{for all}~~(\phi_1,\psi_1)\in\Tilde{\mathcal{A}}_{\omega,c,\alpha-3\gamma},
\end{align*}
which implies $(\phi,\psi)\in \Tilde{\mathcal{G}}_{\omega,c,\alpha-3\gamma}$. 
 \end{proof}

 \begin{lemma}\label{lemma:b}
Let $1\leq N\leq3$, $\gamma>0$ and $\gamma\neq3$. Assume that $\Tilde{a}_{\omega,c,\alpha-3\gamma}\neq \emptyset$, then $\widetilde{\mathcal{G}}_{\omega,c,\alpha-3\gamma}\subset\Tilde{a}_{\omega,c,\alpha-3\gamma}$.
 \end{lemma}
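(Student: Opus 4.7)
The plan is to chain together the three inclusions $\widetilde{\mathcal{G}}_{\omega,c}\subset \widetilde{\mathcal{A}}_{\omega,c}\subset \widetilde{\mathcal{N}}_{\omega,c}$ and the definitional inequality for $\widetilde{\mathcal{G}}_{\omega,c}$, using the nonempty element produced by Lemma \ref{lemma:a}. Since $\widetilde{\mathcal{G}}_{\omega,c}\subset \widetilde{\mathcal{A}}_{\omega,c}$ holds trivially by definition, the only structural fact needed is $\widetilde{\mathcal{A}}_{\omega,c}\subset \widetilde{\mathcal{N}}_{\omega,c}$.

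First I would verify $\widetilde{\mathcal{A}}_{\omega,c}\subset \widetilde{\mathcal{N}}_{\omega,c}$. This is immediate: if $(\phi,\psi)\in \widetilde{\mathcal{A}}_{\omega,c}$, then pairing $\widetilde{S}^{\prime}_{\omega,c}(\phi,\psi)=0$ with $(\phi,\psi)$ gives
\[
\widetilde{N}_{\omega,c}(\phi,\psi)=\bigl(\widetilde{S}^{\prime}_{\omega,c}(\phi,\psi),(\phi,\psi)\bigr)=0,
\]
and $(\phi,\psi)\neq(0,0)$ by definition of $\widetilde{\mathcal{A}}_{\omega,c}$, so $(\phi,\psi)\in\widetilde{\mathcal{N}}_{\omega,c}$.

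Next, since $\widetilde{a}_{\omega,c}\neq \emptyset$ by hypothesis, I would pick some $(\phi_0,\psi_0)\in \widetilde{a}_{\omega,c}$. By Lemma \ref{lemma:a} we have $(\phi_0,\psi_0)\in \widetilde{\mathcal{G}}_{\omega,c}$, and in particular $(\phi_0,\psi_0)\in \widetilde{\mathcal{A}}_{\omega,c}$ with $\widetilde{S}_{\omega,c}(\phi_0,\psi_0)=\widetilde{\mu}_{\omega,c}$.

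Now take an arbitrary $(\phi,\psi)\in \widetilde{\mathcal{G}}_{\omega,c}$. By the two inclusions above, $(\phi,\psi)\in \widetilde{\mathcal{N}}_{\omega,c}$, so the definition of $\widetilde{\mu}_{\omega,c}$ yields
\[
\widetilde{S}_{\omega,c}(\phi,\psi)\geq \widetilde{\mu}_{\omega,c}=\widetilde{S}_{\omega,c}(\phi_0,\psi_0).
\]
On the other hand, $(\phi,\psi)\in \widetilde{\mathcal{G}}_{\omega,c}$ combined with $(\phi_0,\psi_0)\in \widetilde{\mathcal{A}}_{\omega,c}$ gives the reverse inequality $\widetilde{S}_{\omega,c}(\phi,\psi)\leq \widetilde{S}_{\omega,c}(\phi_0,\psi_0)$. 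Hence $\widetilde{S}_{\omega,c}(\phi,\psi)=\widetilde{\mu}_{\omega,c}$, and together with $(\phi,\psi)\in\widetilde{\mathcal{N}}_{\omega,c}$ this gives $(\phi,\psi)\in \widetilde{a}_{\omega,c}$, completing the proof. The argument is almost purely formal; the only potential pitfall is ensuring that the Lagrange-multiplier-type fact used in Lemma \ref{lemma:a} (which gives nonemptiness of $\widetilde{a}_{\omega,c}$ producing an actual critical point) has been secured — but this is invoked only through the assumption $\widetilde{a}_{\omega,c}\neq \emptyset$, so no new analytic obstacle appears at this step.
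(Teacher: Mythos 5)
Your proposal is correct and follows essentially the same route as the paper: pick an element of $\Tilde{a}_{\omega,c}$, promote it to $\widetilde{\mathcal{G}}_{\omega,c}$ via Lemma \ref{lemma:a}, compare energies in both directions using the inclusion $\widetilde{\mathcal{A}}_{\omega,c}\subset\Tilde{\mathcal{N}}_{\omega,c}$, and conclude. The only difference is that you spell out the inclusion $\widetilde{\mathcal{A}}_{\omega,c}\subset\Tilde{\mathcal{N}}_{\omega,c}$ explicitly, which the paper takes for granted (having noted it in the proof of Lemma \ref{lemma:a}).
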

\begin{proof}
Let $(\phi,\psi)\in \widetilde{\mathcal{G}}_{\omega,c,\alpha-3\gamma} $. One can take $(\phi_1,\psi_1)\in \Tilde{a}_{\omega,c,\alpha-3\gamma}$, by Lemma \ref{lemma:a}, we have $(\phi_1,\psi_1)\in \widetilde{\mathcal{G}}_{\omega,c,\alpha-3\gamma}$, i.e., $\Tilde{S}_{\omega,c,\alpha-3\gamma}(\phi_1,\psi_1)=\Tilde{S}_{\omega,c,\alpha-3\gamma}(\phi,\psi)$. Therefore, for each $(u,v)\in\Tilde{\mathcal{N}}_{\omega,c,\alpha-3\gamma}$, we obtain
\begin{align*}
    \Tilde{S}_{\omega,c,\alpha-3\gamma}(\phi,\psi)=\Tilde{S}_{\omega,c,\alpha-3\gamma}(\phi_1,\psi_1)\leq \Tilde{S}_{\omega,c,\alpha-3\gamma}(u,v).
\end{align*}
Since $(\phi,\psi)\in \widetilde{\mathcal{G}}_{\omega,c,\alpha-3\gamma}\subset\Tilde{\mathcal{N}}_{\omega,c,\alpha-3\gamma}$, we deduce that $(\phi,\psi)\in \Tilde{a}_{\omega,c,\alpha-3\gamma}$.
\end{proof}

\begin{lemma}\label{lemma:positive}
Let $1\leq N\leq3$, $\gamma>0$ and $\gamma\neq3$. Then there exists $M>0$ such that $\Tilde{\mu}_{\omega,c,\alpha-3\gamma}={\mu}_{\omega,c,\alpha-3\gamma}\geq\frac{1}{2M}$.
\end{lemma}

\begin{proof}
By the definition of $\Tilde{S}_{\omega,c,\alpha-3\gamma}$,  $\Tilde{\mathcal{Q}}_{\omega,c,\alpha-3\gamma}$ and $\Tilde{N}_{\omega,c,\alpha-3\gamma}$, we have the following relation
\[\Tilde{S}_{\omega,c,\alpha-3\gamma}=\frac{1}{2}\Tilde{\mathcal{Q}}_{\omega,c,\alpha-3\gamma}+\frac{1}{4}\Tilde{N}_{\omega,c,\alpha-3\gamma}.\]
One can rewrite  $\Tilde{\mu}_{\omega,c,\alpha-3\gamma}$ as
\begin{align}\label{exist:3}
    \Tilde{\mu}_{\omega,c,\alpha-3\gamma}=\inf\left\{\frac{1}{2}\Tilde{\mathcal{Q}}_{\omega,c,\alpha-3\gamma}:~~(\phi,\psi)\in\Tilde{\mathcal{N}}_{\omega,c,\alpha-3\gamma}\right\}.
\end{align}
By the H\"older inequality, Gagliardo-Nirenberg inequality and Sobolev embedding theorem, we have 
\begin{align*}
    &\Tilde{\mathcal{Q}}_{\omega,c,\alpha-3\gamma}(u,v)=2\widetilde{D}(u,v)\\
    =&\int \left(\frac{1}{36}|u(x)|^4+\frac{9}{4}|v(x)|^4+|u(x)|^2|v(x)|^2+\frac{1}{9}\Re\left[ e^{\frac{i(3-\gamma)}{2}c\cdot x}\left(\bar{u}^3(x)v(x)\right)\right]\right)\\
    \leq&M_1\|\nabla u\|_{L^2}^N\|u\|_{L^2}^{4-N}+M_1\|\nabla v\|_{L^2}^N\|v\|_{L^2}^{4-N}+M_3\|\nabla u\|_{L^2}^{\frac{3N}{4}}\|u\|_{L^2}^{\frac{3(4-N)}{4}}\|\nabla v\|_{L^2}^{\frac{N}{4}}\|v\|_{L^2}^{\frac{(4-N)}{4}}\\
    \leq&M\Tilde{\mathcal{Q}}^2_{\omega,c,\alpha-3\gamma}(u,v),
\end{align*}
where $M=M(N)$ is independent of $c$ and $\gamma$. 
Dividing both sides of the above by $\Tilde{\mathcal{Q}}_{\omega,c,\alpha-3\gamma}>0$, we obtain the positive lower bound \[\Tilde{\mathcal{Q}}_{\omega,c,\alpha-3\gamma}\geq\frac{1}{M}.\] 
This means that 
\begin{align*}
    \mu_{\omega,c,\alpha-3\gamma}=\Tilde{\mu}_{\omega,c,\alpha-3\gamma}\geq\frac{1}{2M}.
\end{align*}
This completes the proof of Lemma \ref{lemma:positive}.
\end{proof}

\begin{lemma}\label{lemma:upper}
Let  $1\leq N\leq3$, $\gamma>0$ and $\gamma\neq3$. If $(u,v)\in \Tilde{X}_{\omega,c,\alpha-3\gamma}$ satisfies $\Tilde{N}_{\omega,c,\alpha-3\gamma}(u,v)<0$, then $\frac{1}{2}\Tilde{\mathcal{Q}}_{\omega,c,\alpha-3\gamma}>\Tilde{\mu}_{\omega,c,\alpha-3\gamma}$.
\end{lemma}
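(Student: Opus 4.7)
The plan is to use a standard Nehari scaling argument: given $(u,v)$ with $\tilde{N}_{\omega,c}(u,v)<0$, rescale by a factor $\tau\in(0,1)$ to land on $\tilde{\mathcal{N}}_{\omega,c}$, then use the reformulation $\tilde{\mu}_{\omega,c}=\inf\{\tfrac{1}{2}\tilde Q_{\omega,c}(\phi,\psi):(\phi,\psi)\in\tilde{\mathcal{N}}_{\omega,c}\}$ established in \eqref{exist:3} to conclude.

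First I would note that under the standing hypothesis $\omega>\max\{|c|^2/4,\gamma|c|^2/12\}$, the quadratic form
\[
\tilde{Q}_{\omega,c}(u,v)=\tfrac{1}{2}\|\nabla u\|_{L^2}^2+\tfrac{1}{2}\|\nabla v\|_{L^2}^2+\tfrac{1}{2}\bigl(\omega-\tfrac{|c|^2}{4}\bigr)\|u\|_{L^2}^2+\tfrac{1}{2}\bigl(3\gamma\omega-\tfrac{\gamma^2|c|^2}{4}\bigr)\|v\|_{L^2}^2
\]
is positive definite on $\tilde{X}_{\omega,c}\setminus\{(0,0)\}$, so in particular $\tilde{Q}_{\omega,c}(u,v)>0$ for any nonzero $(u,v)$ with $\tilde N_{\omega,c}(u,v)<0$.

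Next I would exploit the pure homogeneities in the scaling $\tau\mapsto(\tau u,\tau v)$: since $\tilde Q_{\omega,c}$ is homogeneous of degree $2$ and $\tilde D$ of degree $4$, the definition \eqref{def:N} gives
\[
\tilde N_{\omega,c}(\tau u,\tau v)=2\tau^2\tilde Q_{\omega,c}(u,v)-4\tau^4\tilde D(u,v).
\]
The hypothesis $\tilde N_{\omega,c}(u,v)<0$ forces $\tilde D(u,v)>\tfrac{1}{2}\tilde Q_{\omega,c}(u,v)>0$, so the equation $\tilde N_{\omega,c}(\tau u,\tau v)=0$ has a unique positive root
\[
\tau_0^2=\frac{\tilde Q_{\omega,c}(u,v)}{2\tilde D(u,v)}\in(0,1).
\]
Thus $(\tau_0 u,\tau_0 v)\in\tilde{\mathcal{N}}_{\omega,c}$.

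Finally, plugging $(\tau_0 u,\tau_0 v)$ into the formula \eqref{exist:3} for $\tilde\mu_{\omega,c}$ yields
\[
\tilde\mu_{\omega,c}\le \tfrac{1}{2}\tilde Q_{\omega,c}(\tau_0 u,\tau_0 v)=\tfrac{\tau_0^2}{2}\tilde Q_{\omega,c}(u,v)<\tfrac{1}{2}\tilde Q_{\omega,c}(u,v),
\]
which is the desired strict inequality. There is no real obstacle here beyond checking positivity of $\tilde Q_{\omega,c}$ and verifying the degree-counting in the scaling; the scaling works cleanly precisely because $\tilde D$ is purely quartic (the phase $e^{i(3-\gamma)c\cdot x/2}$ appearing in $\tilde D$ plays no role since only the magnitude of each monomial matters for the homogeneity in $\tau$).
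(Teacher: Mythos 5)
Your proof is correct and follows essentially the same route as the paper: rescale $(u,v)$ by the unique $\tau_0\in(0,1)$ with $\tau_0^2=\Tilde{Q}_{\omega,c}(u,v)/(2\Tilde{D}(u,v))$ to land on $\Tilde{\mathcal{N}}_{\omega,c}$, then invoke the reformulation \eqref{exist:3} of $\Tilde{\mu}_{\omega,c}$. In fact your version is slightly cleaner, since the paper writes $\lambda_0:=\Tilde{Q}_{\omega,c}(u,v)/(2\Tilde{D}(u,v))$ where it should be $\lambda_0^2$, a typo you implicitly correct.
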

\begin{proof}
If $\Tilde{N}_{\omega,c,\alpha-3\gamma}(u,v)<0$, then $2\Tilde{D}(u,v)>\Tilde{\mathcal{Q}}_{\omega,c,\alpha-3\gamma}(u,v)>0$. From this, we have
\begin{align*}
    \lambda_0:=\frac{\Tilde{\mathcal{Q}}_{\omega,c,\alpha-3\gamma}(u,v)}{2\Tilde{D}(u,v)}\in(0,1)
\end{align*}
and $\Tilde{N}_{\omega,c,\alpha-3\gamma}(\lambda_0u,\lambda_0v)=0$. By \eqref{exist:3}, we deduce 
\begin{align*}
    \Tilde{\mu}_{\omega,c,\alpha-3\gamma}\leq\frac{1}{2}\Tilde{\mathcal{Q}}_{\omega,c,\alpha-3\gamma}(\lambda_0u,\lambda_0v)=\frac{\lambda_0^2}{2}\Tilde{\mathcal{Q}}_{\omega,c,\alpha-3\gamma}(u,v)<\frac{1}{2}\Tilde{\mathcal{Q}}_{\omega,c,\alpha-3\gamma}(u,v).
\end{align*}
Now we complete the proof of this Lemma.
\end{proof}

\begin{lemma}\label{lemma:nonlinear}
Let $1\leq N\leq3$, $\gamma>0$ and $\gamma\neq3$. If the sequence $\{(u_n,v_n)\}$ weakly converges to $(u,v)\in\Tilde{X}_{\omega,c,\alpha-3\gamma}$, then
\begin{align*}
    \Tilde{D}(u_n,v_n)-\Tilde{D}(u_n-u,v_n-v)\to \Tilde{D}(u,v)~~\text{as}~~n\to\infty.
\end{align*}
\end{lemma}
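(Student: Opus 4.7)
The plan is to combine compactness (Sobolev embedding and Rellich--Kondrachov) with a termwise application of a vector-valued Brezis--Lieb lemma. Since $(u_n, v_n) \rightharpoonup (u,v)$ in $\widetilde{X}_{\omega,c} = H^1(\mathbb{R}^N) \times H^1(\mathbb{R}^N)$, the sequence is uniformly bounded in $H^1$; the Sobolev embedding $H^1 \hookrightarrow L^4$, valid for $1\leq N\leq 3$, upgrades this to a uniform bound in $L^4 \times L^4$. By Rellich--Kondrachov on nested balls together with a diagonal extraction I may pass to a subsequence (not relabeled) along which $u_n \to u$ and $v_n \to v$ pointwise almost everywhere. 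Since the claimed limit is unique, proving convergence along this subsequence suffices.

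I write $\widetilde{D}(u,v) = \sum_{i=1}^4 \int F_i(x,u,v)\, dx$, where
\begin{align*}
F_1(z,w) &= \tfrac{1}{36}|z|^4, \quad F_2(z,w) = \tfrac{9}{4}|w|^4, \quad F_3(z,w) = |z|^2|w|^2,\\
F_4(x,z,w) &= \tfrac{1}{9}\Re\!\bigl[e^{i(3-\gamma)c\cdot x/2}\,\bar z^3 w\bigr].
\end{align*}
It suffices to prove, for each $i$, that $\int[F_i(u_n, v_n) - F_i(u_n - u, v_n - v)]\,dx \to \int F_i(u,v)\,dx$. Setting $w_n := u_n - u$ and $z_n := v_n - v$, the central ingredient is the pointwise inequality: for every $\varepsilon > 0$ there exists $C_\varepsilon > 0$ such that
\begin{align*}
\bigl|F_i(a+b) - F_i(a) - F_i(b)\bigr| \leq \varepsilon\bigl(|a_1|^4 + |a_2|^4\bigr) + C_\varepsilon\bigl(|b_1|^4 + |b_2|^4\bigr)
\end{align*}
for all $a = (a_1,a_2), b = (b_1,b_2) \in \mathbb{C}^2$. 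This is proved by expanding the difference into a finite sum of multilinear monomials of total degree four, each containing at least one factor of $a$ and one of $b$, and applying Young's inequality $|a|^j |b|^{4-j} \leq \varepsilon|a|^4 + C_\varepsilon |b|^4$ termwise. Specializing to $a = (w_n, z_n)$, $b = (u,v)$, the integrand on the left converges to zero pointwise almost everywhere, is dominated by $C(|w_n|^4 + |z_n|^4 + |u|^4 + |v|^4)$, and its integral is bounded by $\varepsilon\, C_1 + C_\varepsilon \int(|u|^4 + |v|^4)$ uniformly in $n$; a standard application of dominated convergence together with the uniform $L^4$ bound yields
\begin{align*}
\limsup_{n\to\infty}\bigl|\widetilde{D}(u_n,v_n) - \widetilde{D}(w_n, z_n) - \widetilde{D}(u,v)\bigr| \leq C\,\varepsilon,
\end{align*}
and letting $\varepsilon \to 0$ closes the argument.

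The main obstacle is the bookkeeping for the cubic-linear term $F_4$: expanding $\overline{(u + w_n)}^3(v + z_n)$ yields eight complex monomials of the form $\bar u^{\,j} \bar w_n^{\,3-j}\, v^k z_n^{1-k}$, and one has to verify that after isolating the two ``pure'' terms $\bar u^3 v$ (the case $j=3$, $k=1$) and $\bar w_n^3 z_n$ (the case $j=0$, $k=0$), the remaining six cross monomials each mix a factor from $(u,v)$ with a factor from $(w_n, z_n)$, so that the pointwise Brezis--Lieb inequality above applies to $F_4$. The unimodular phase $e^{i(3-\gamma)c\cdot x/2}$ contributes only a factor of modulus one and hence does not interfere with any of the absolute-value estimates; once the conjugation bookkeeping in $F_4$ is settled, the argument is a routine consequence of the standard vector-valued Brezis--Lieb lemma.
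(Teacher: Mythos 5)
Your proof is correct: the termwise Brezis--Lieb argument, with the pointwise Young-type inequality $|a|^{j}|b|^{4-j}\leq \varepsilon|a|^4+C_\varepsilon|b|^4$ applied to each degree-four piece of $\Tilde{D}$ (including the eight-monomial expansion of the cubic--linear coupling, where the unimodular phase $e^{\frac{i(3-\gamma)}{2}c\cdot x}$ is indeed harmless), is precisely the ``direct calculation'' that the paper invokes without writing out, its entire proof being the sentence ``By direct calculation, one can obtain this result.'' The only refinement worth making is to perform the Rellich--Kondrachov/diagonal extraction inside an \emph{arbitrary} subsequence, so that the subsequence principle legitimately upgrades the conclusion to the full sequence.
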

\begin{proof}
By direct calculation, one can obtain this result.
\end{proof}

Note that $\Tilde{Q}_{\omega,c,\alpha-3\gamma}$ and $\Tilde{D}$ are invariant under 
\begin{align*}
    \Tilde{\tau}_{y}(u,v):=\left(e^{-\frac{i}{2}c\cdot x}u(\cdot-y),e^{-\frac{i\gamma}{2}c\cdot x}v(\cdot-y)\right),
\end{align*}
that is, 
\[\Tilde{\mathcal{Q}}_{\omega,c,\alpha-3\gamma}( \Tilde{\tau}_{y}(u,v))=\Tilde{\mathcal{Q}}_{\omega,c,\alpha-3\gamma}(u,v),~~\Tilde{D}( \Tilde{\tau}_{y}(u,v))=\Tilde{D}(u,v),\]
for all $y\in\mathbb{R}^N$.

\begin{lemma}\label{lemma:w}
Let $1\leq N\leq3$, $\gamma>0$ and $\gamma\neq3$.  If a sequence $\{(u_n,v_n)\}$ in $\Tilde{X}_{\omega,c,\alpha-3\gamma}$ satisfies
\begin{align*}
    \Tilde{\mathcal{Q}}_{\omega,c,\alpha-3\gamma}(u_n,v_n)\to A_1,~~\Tilde{D}(u_n,v_n)\to A_2~~\text{as}~~n\to\infty,
\end{align*}
for some positive constants $A_1,A_2>0$, then there exist $\{y_n\}$ and $(u,v)\in\Tilde{X}_{\omega,c,\alpha-3\gamma}\setminus \{(0,0)\}$ such that $\{\Tilde{\tau}_{y}(u_n,v_n)\}$ has a subsequence that weakly converges to $(u,v)$ in $\Tilde{X}_{\omega,c,\alpha-3\gamma}$.
\end{lemma}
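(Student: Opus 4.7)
The plan is the standard concentration-compactness / vanishing dichotomy, with vanishing excluded by the assumption $\widetilde{D}(u_n,v_n)\to A_2>0$. First I would observe that the standing assumption $\omega>\max\{|c|^2/4,\gamma|c|^2/12\}$ renders both coefficients $\omega-|c|^2/4$ and $3\gamma\omega-\gamma^2|c|^2/4$ positive, so $\widetilde{Q}_{\omega,c}$ is equivalent to the $H^1(\mathbb{R}^N)\times H^1(\mathbb{R}^N)$ norm on $\widetilde{X}_{\omega,c}$. In particular, $\widetilde{Q}_{\omega,c}(u_n,v_n)\to A_1$ immediately yields that $\{(u_n,v_n)\}$ is bounded in $\widetilde{X}_{\omega,c}$.

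To rule out vanishing, suppose for contradiction that for every $R>0$,
\[
\sup_{y\in\mathbb{R}^N}\int_{B(y,R)}(|u_n|^2+|v_n|^2)\,dx\to 0\qquad\text{as }n\to\infty.
\]
Combined with the $H^1$-bound, Lions' lemma forces $u_n,v_n\to 0$ strongly in $L^p(\mathbb{R}^N)$ for every $p$ strictly between $2$ and the Sobolev critical exponent. Since $1\le N\le 3$, the exponent $p=4$ lies in this range, so $u_n,v_n\to 0$ in $L^4$. H\"older's inequality then bounds every contribution to $\widetilde{D}(u_n,v_n)$ by $L^4$ norms; for instance,
\[
\int |u_n|^2|v_n|^2\le\|u_n\|_{L^4}^2\|v_n\|_{L^4}^2,\qquad \left|\int e^{\frac{i(3-\gamma)}{2}c\cdot x}\bar u_n^3 v_n\right|\le \|u_n\|_{L^4}^3\|v_n\|_{L^4}.
\]
Thus $\widetilde{D}(u_n,v_n)\to 0$, contradicting $\widetilde{D}(u_n,v_n)\to A_2>0$.

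Consequently, along a subsequence there exist $\delta>0$, $R>0$ and $\{y_n\}\subset\mathbb{R}^N$ with $\int_{B(y_n,R)}(|u_n|^2+|v_n|^2)\,dx\ge\delta$. Applying $\widetilde{\tau}_{y_n}$ (whose invariance properties for $\widetilde{Q}_{\omega,c}$ and $\widetilde{D}$ were recorded just before the lemma), the shifted sequence remains bounded in $\widetilde{X}_{\omega,c}$, so by Banach--Alaoglu we may extract a further subsequence converging weakly in $\widetilde{X}_{\omega,c}$ to some $(u,v)$. The Rellich--Kondrachov theorem provides strong $L^2$-convergence on $B(0,R)$, and the lower mass bound passes to the limit to yield $(u,v)\ne(0,0)$. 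The main subtle point is ruling out vanishing in the presence of the non-translation-invariant phase $e^{i(3-\gamma)c\cdot x/2}$ appearing in $\widetilde{D}$, handled cleanly by the H\"older estimate on the cross term above.
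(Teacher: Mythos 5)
Your proof is correct. The paper's own argument is essentially a two-line reduction: boundedness in $\Tilde{X}_{\omega,c}$ follows from $\Tilde{Q}_{\omega,c}(u_n,v_n)\to A_1$, non-vanishing of $\limsup_n(\|u_n\|_{L^4}+\|v_n\|_{L^4})$ follows from $\Tilde{D}(u_n,v_n)\to A_2>0$ by exactly the H\"older estimates you write down, and then the conclusion is obtained by citing Lieb's compactness lemma (\cite[Lemma 6]{L1983Invent}) as a black box. You reach the same two intermediate facts but then finish differently: instead of invoking Lieb's lemma you run the Lions vanishing dichotomy, extract concentration points $y_n$ from the failure of vanishing, translate, and use Banach--Alaoglu plus Rellich--Kondrachov to see the weak limit is nonzero. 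The two routes are interchangeable here; yours is more self-contained (it reproves the compactness step from standard ingredients), while the paper's is shorter at the cost of an external citation. Two minor points, neither a gap: the sign of the translation ($y_n$ versus $-y_n$) in the paper's convention $\Tilde{\tau}_{y}(u,v)=\left(e^{-\frac{i}{2}c\cdot x}u(\cdot-y),e^{-\frac{i\gamma}{2}c\cdot x}v(\cdot-y)\right)$ is a harmless relabeling since the lemma only asserts existence of some sequence $\{y_n\}$; and your remark that the phase factors in $\Tilde{\tau}_y$ cause no trouble is right, since they do not affect moduli and only add an $L^2$-controlled term to the gradient, so boundedness and the local mass lower bound survive.
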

\begin{proof}
Since $\Tilde{\mathcal{Q}}_{\omega,c,\alpha-3\gamma}(u_n,v_n)\to A_1$, we deduce that the sequence $\{(u_n,v_n)\}$ is bounded in $\Tilde{X}_{\omega,c,\alpha-3\gamma}$. Since $\Tilde{D}(u_n,v_n)\to A_2$, we obtain
\begin{align*}
    \limsup_{n\to\infty}\|u_n\|_{L^4}+\limsup_{n\to\infty}\|v_n\|_{L^4}>0.
\end{align*}
Hence, by \cite[Lemma 6]{L1983Invent}, we can obtain the desired result.
\end{proof}

\begin{lemma}\label{lemma:strong}
Let  $1\leq N\leq3$, $\gamma>0$ and $\gamma\neq3$.  If a sequence  $\{(u_n,v_n)\}$ in $\Tilde{X}_{\omega,c,\alpha-3\gamma}$ satisfies
\begin{align*}
    \Tilde{N}_{\omega,c,\alpha-3\gamma}(u_n,v_n)\to0,~~\Tilde{S}_{\omega,c,\alpha-3\gamma}(u_n,v_n)\to\Tilde{\mu}_{\omega,c,\alpha-3\gamma}~~\text{as}~~n\to\infty,
\end{align*}
then there exist $\{y_n\}$ and $(u,v)\in\Tilde{X}_{\omega,c,\alpha-3\gamma}\setminus\{(0,0)\}$ such that $\{\Tilde{\tau}_{y_n}(u_n,v_n)\}$ has a subsequence that converges to $(u,v)$ in $\Tilde{X}_{\omega,c,\alpha-3\gamma}$. In particular, $(u,v)\in\Tilde{a}_{\omega,c,\alpha-3\gamma}$.
\end{lemma}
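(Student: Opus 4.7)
This is a Nehari-type concentration-compactness argument. First I would use the identity $\Tilde{S}_{\omega,c} = \frac{1}{2}\Tilde{Q}_{\omega,c} + \frac{1}{4}\Tilde{N}_{\omega,c}$ together with the hypotheses $\Tilde{N}_{\omega,c}(u_n,v_n) \to 0$ and $\Tilde{S}_{\omega,c}(u_n,v_n) \to \Tilde{\mu}_{\omega,c}$ to deduce that
\begin{align*}
\Tilde{Q}_{\omega,c}(u_n,v_n) \to 2\Tilde{\mu}_{\omega,c} > 0, \qquad \Tilde{D}(u_n,v_n) = \tfrac{1}{4}\bigl(2\Tilde{Q}_{\omega,c} - \Tilde{N}_{\omega,c}\bigr)(u_n,v_n) \to \Tilde{\mu}_{\omega,c} > 0,
\end{align*}
where positivity comes from Lemma \ref{lemma:positive}. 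Lemma \ref{lemma:w} then yields translations $\{y_n\} \subset \mathbb{R}^N$ and a nonzero weak limit $(u,v) \in \Tilde{X}_{\omega,c}$ such that, along a subsequence, $(\Tilde{u}_n,\Tilde{v}_n) := \Tilde{\tau}_{y_n}(u_n,v_n) \rightharpoonup (u,v)$ in $\Tilde{X}_{\omega,c}$; note that $\Tilde{Q}_{\omega,c}$, $\Tilde{D}$, $\Tilde{N}_{\omega,c}$, $\Tilde{S}_{\omega,c}$ are all invariant under $\Tilde{\tau}_{y_n}$.

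Next, under $\omega > \max\{|c|^2/4,\gamma|c|^2/12\}$ the quadratic form $\Tilde{Q}_{\omega,c}$ is positive-definite and equivalent to the square of the $\Tilde{X}_{\omega,c}$-norm, so the standard Hilbert-space identity gives
\begin{align*}
\Tilde{Q}_{\omega,c}(\Tilde{u}_n,\Tilde{v}_n) - \Tilde{Q}_{\omega,c}(\Tilde{u}_n - u,\Tilde{v}_n - v) \to \Tilde{Q}_{\omega,c}(u,v).
\end{align*}
Combining this with Lemma \ref{lemma:nonlinear} yields the analogous splitting for $\Tilde{D}$ and therefore, by the definition $\Tilde{N}_{\omega,c} = 2\Tilde{Q}_{\omega,c} - 4\Tilde{D}$, a splitting for $\Tilde{N}_{\omega,c}$. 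In particular, since $\Tilde{N}_{\omega,c}(\Tilde{u}_n,\Tilde{v}_n) \to 0$,
\begin{align*}
\Tilde{N}_{\omega,c}(\Tilde{u}_n - u,\Tilde{v}_n - v) \to -\Tilde{N}_{\omega,c}(u,v).
\end{align*}

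The crux is to show $\Tilde{N}_{\omega,c}(u,v) = 0$. If $\Tilde{N}_{\omega,c}(u,v) < 0$, Lemma \ref{lemma:upper} gives $\frac{1}{2}\Tilde{Q}_{\omega,c}(u,v) > \Tilde{\mu}_{\omega,c}$, contradicting weak lower semicontinuity $\Tilde{Q}_{\omega,c}(u,v) \leq \liminf \Tilde{Q}_{\omega,c}(\Tilde{u}_n,\Tilde{v}_n) = 2\Tilde{\mu}_{\omega,c}$. If instead $\Tilde{N}_{\omega,c}(u,v) > 0$, then the residual satisfies $\Tilde{N}_{\omega,c}(\Tilde{u}_n - u,\Tilde{v}_n - v) < 0$ for large $n$; Lemma \ref{lemma:upper} applied to the residual together with the splitting identity for $\Tilde{Q}_{\omega,c}$ then forces $\Tilde{Q}_{\omega,c}(u,v) < 0$, contradicting positive-definiteness of $\Tilde{Q}_{\omega,c}$. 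Hence $\Tilde{N}_{\omega,c}(u,v) = 0$, so $(u,v) \in \Tilde{\mathcal{N}}_{\omega,c}$ and $\Tilde{S}_{\omega,c}(u,v) \geq \Tilde{\mu}_{\omega,c}$. The reverse inequality $\Tilde{S}_{\omega,c}(u,v) = \frac{1}{2}\Tilde{Q}_{\omega,c}(u,v) \leq \Tilde{\mu}_{\omega,c}$ comes from weak lower semicontinuity, giving $(u,v) \in \Tilde{a}_{\omega,c}$ and $\Tilde{Q}_{\omega,c}(\Tilde{u}_n,\Tilde{v}_n) \to \Tilde{Q}_{\omega,c}(u,v)$. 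Since $\Tilde{Q}_{\omega,c}^{1/2}$ is equivalent to the $\Tilde{X}_{\omega,c}$-norm, norm convergence plus weak convergence upgrades to strong convergence, as desired.

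The main obstacle is the trichotomy for $\Tilde{N}_{\omega,c}(u,v)$: ruling out the case $\Tilde{N}_{\omega,c}(u,v) > 0$ requires transferring the strict Nehari inequality of Lemma \ref{lemma:upper} from the residual $(\Tilde{u}_n - u,\Tilde{v}_n - v)$ back to $(u,v)$ via the splitting identities, so one must verify that the Brezis-Lieb-type splitting of $\Tilde{D}$ supplied by Lemma \ref{lemma:nonlinear} is compatible with the quadratic splitting of $\Tilde{Q}_{\omega,c}$ and that all error terms have controlled sign and vanish in the limit.
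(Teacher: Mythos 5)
Your proposal is correct and follows essentially the same route as the paper: extract a nonvanishing weak limit via Lemma \ref{lemma:w}, use the Brezis--Lieb-type splittings of $\Tilde{Q}_{\omega,c}$ and $\Tilde{D}$ (Lemma \ref{lemma:nonlinear}) together with Lemma \ref{lemma:upper} to pin down the sign of $\Tilde{N}_{\omega,c}$ on the limit, and then upgrade to strong convergence by weak lower semicontinuity. The only difference is organizational — you argue by a trichotomy on $\Tilde{N}_{\omega,c}(u,v)$ whereas the paper first shows $\Tilde{N}_{\omega,c}$ of the residual is nonnegative and deduces $\Tilde{N}_{\omega,c}(u,v)\leq 0$ — and your ``forces $\Tilde{Q}_{\omega,c}(u,v)<0$'' should read ``$\leq 0$'', which still contradicts positive-definiteness since $(u,v)\neq(0,0)$.
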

\begin{proof}
By assumptions, we have
\begin{align*}
    \frac{1}{2}\Tilde{\mathcal{Q}}_{\omega,c,\alpha-3\gamma}(u_n,v_n)=\Tilde{S}_{\omega,c,\alpha-3\gamma}(u_n,v_n)-\frac{1}{4}\Tilde{N}(u_n,v_n)\to\Tilde{\mu}_{\omega,c,\alpha-3\gamma},\\
   \Tilde{D}(u_n,v_n)=\Tilde{S}_{\omega,c,\alpha-3\gamma}(u_n,v_n)-\frac{1}{2}\Tilde{N}_{\omega,c,\alpha-3\gamma}(u_n,v_n)\to \Tilde{\mu}_{\omega,c,\alpha-3\gamma}.
\end{align*}
From Lemma \ref{lemma:positive} and Lemma \ref{lemma:w}, there exist $\{y_n\}$, $(u,v)\in \Tilde{X}_{\omega,c,\alpha-3\gamma}\setminus\{(0,0)\}$, and a subsequence of $\Tilde{\tau}_{y_n}(u_n,v_n)$(still denoted by $\Tilde{\tau}_{y_n}(u_n,v_n)$) such that $\Tilde{\tau}_{y_n}(u_n,v_n)\rightharpoonup (u,v)$ weakly in $\Tilde{X}_{\omega,c,\alpha-3\gamma}$.

By the weak convergence of $\Tilde{\tau}_{y_n}(u_n,v_n)$ and Lemma \ref{lemma:nonlinear}, we have 
\begin{align}\label{conv:1}
    \Tilde{\mathcal{Q}}_{\omega,c,\alpha-3\gamma}(\Tilde{\tau}_{y_n}(u_n,v_n))-\Tilde{\mathcal{Q}}_{\omega,c,\alpha-3\gamma}\left(\Tilde{\tau}_{y_n}(u_n,v_n)-(u,v)\right)\to\Tilde{\mathcal{Q}}_{\omega,c,\alpha-3\gamma}(u,v),\\\label{conv:2}
    \Tilde{N}_{\omega,c,\alpha-3\gamma}(\Tilde{\tau}_{y_n}(u_n,v_n))-\Tilde{N}_{\omega,c,\alpha-3\gamma}\left(\Tilde{\tau}_{y_n}(u_n,v_n)-(u,v)\right)\to\Tilde{N}_{\omega,c,\alpha-3\gamma}(u,v).
\end{align}
From \eqref{conv:1} and $\Tilde{Q}_{\omega,c,\alpha-3\gamma}(u,v)>0$, we obtain that, up to subsequence, 
\begin{align*}
    \frac{1}{2}\lim_{n\to\infty}\Tilde{\mathcal{Q}}_{\omega,c,\alpha-3\gamma}\left(\Tilde{\tau}_{y_n}(u_n,v_n)-(u,v)\right)<\frac{1}{2}\lim_{n\to\infty}\Tilde{\mathcal{Q}}_{\omega,c,\alpha-3\gamma}(\Tilde{\tau}_{y_n}(u_n,v_n))=\Tilde{\mu}_{\omega,c,\alpha-3\gamma}.
\end{align*}
Combining this and Lemma \ref{lemma:upper}, we obtain $\Tilde{N}_{\omega,c,\alpha-3\gamma}\left(\Tilde{\tau}_{y_n}(u_n,v_n)-(u,v)\right)>0$ for $n$ large enough. Therefore, from \eqref{conv:2}, $\Tilde{N}_{\omega,c,\alpha-3\gamma}(u,v)\leq0$ since $\Tilde{N}_{\omega,c,\alpha-3\gamma}(\Tilde{\tau}_{y_n}(u_n,v_n))\to0$. Again, by Lemma \ref{lemma:upper} and the weakly lower semi-continuity of norms, we have 
\begin{align*}
    \Tilde{\mu}_{\omega,c,\alpha-3\gamma}\leq\frac{1}{2}\Tilde{\mathcal{Q}}_{\omega,c,\alpha-3\gamma}(u,v)\leq\frac{1}{2}\Tilde{\mathcal{Q}}_{\omega,c,\alpha-3\gamma}\left(\Tilde{\tau}_{y_n}(u_n,v_n)\right)=\Tilde{\mu}_{\omega,c,\alpha-3\gamma}.
\end{align*}
Therefore, by \eqref{conv:1}, we deduce $\Tilde{Q}_{\omega,c,\alpha-3\gamma}\left(\Tilde{\tau}_{y_n}(u_n,v_n)-(u,v)\right)\to0$ as $n\to\infty$, which implies that $\Tilde{\tau}_{y_n}(u_n,v_n)\to(u,v)$ strongly in $\Tilde{X}_{\omega,c,\alpha-3\gamma}$. Now we have completed the proof of this lemma.
\end{proof}
\begin{proof}[\bf Proof of Proposition \ref{pro}.]
Combining the Lemmas \ref{lemma:a}, \ref{lemma:b} and \ref{lemma:strong}, one can get Proposition \ref{pro}. 
\end{proof}

In order to prove Theorem \ref{Thm1} (ii), we will prove that the minimizer $(u,v)$ is a non-semitrivial solution ($u\neq0$ and $v\neq0$) of the system \eqref{equ:s}.  If $u=0$, then $(0,\Tilde{Q})$ is the semi-trivial solution of system \eqref{equ:s}, where $\Tilde{Q}$ is a Galilean transformation of $Q$, and $Q$ is the positive ground state of equation \eqref{equ:Q}. Since $Q$ is the ground state of \eqref{equ:Q}, then $S_{\omega,0,\alpha-3\gamma}(Q)\leq S_{\omega,c,\alpha-3\gamma}(\Tilde{Q})$. As below, we give the relation between $S_{\omega,c,\alpha-3\gamma}(u,v)$ and $S_{\omega,0,\alpha-3\gamma}(0,Q)$.
\begin{lemma}\label{lemma:trivial}
Assume that $3\gamma\omega+\alpha-3\gamma>\omega{9}^{\frac{4}{4-N}}$. Then the functions $(u,v)$ in the Nehari manifold $N_{\omega,c,\alpha-3\gamma}$ satisfy
\begin{align*}
    S_{\omega,c,\alpha-3\gamma}(u,v)<S_{\omega,c,\alpha-3\gamma}(0,Q)=S_{\omega,0,\alpha-3\gamma}(0,Q),
\end{align*}
where $Q$ is the positive ground state of \eqref{equ:Q}. In particular, $(0,Q)$ is not a minimizer of $S_{\omega,c,\alpha-3\gamma}$ in the complex space and $(u,v)$ is a non-semitrivial solution ($u\neq0$ and $v\neq0$).
\end{lemma}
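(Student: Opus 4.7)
My plan is to establish the strict inequality $\mu_{\omega,c} < S_{\omega,c}(0,Q) = S_{\omega,0}(0,Q)$; this upgrades the minimizer produced by Proposition \ref{pro} to a non-semitrivial one. The equality $S_{\omega,c}(0,Q) = S_{\omega,0}(0,Q)$ is immediate, since $Q$ is real-valued and so $P(0,Q) = 0$ kills the momentum contribution. Testing \eqref{equ:Q} against $Q$ itself gives $\|\nabla Q\|_{L^2}^2 + 3\gamma\omega\|Q\|_{L^2}^2 = 9\|Q\|_{L^4}^4$, which is exactly $N_{\omega,c}(0,Q) = 0$; hence $(0,Q) \in \mathcal{N}_{\omega,c}$ with $S_{\omega,c}(0,Q) = \tfrac{9}{4}\|Q\|_{L^4}^4$. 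This furnishes the a priori bound $\mu_{\omega,c} \leq \tfrac{9}{4}\|Q\|_{L^4}^4$, and the task is to upgrade it to a strict inequality under the hypothesis $3\gamma > (9/\sqrt 2)^{4/(N-4)}$.

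To prove strictness I would exhibit a concrete trial pair on $\mathcal{N}_{\omega,c}$ with strictly smaller action. Since $Q_{\omega,c}$ is $2$-homogeneous and $D$ is $4$-homogeneous, any pair $(u,v)$ with $D(u,v) > 0$ can be rescaled onto $\mathcal{N}_{\omega,c}$ by $\tau^2 = Q_{\omega,c}(u,v)/(2D(u,v))$, and the projected action equals $Q_{\omega,c}(u,v)^2/(4D(u,v))$. The goal therefore reduces to finding real $(u,v)$ verifying
\begin{align*}
Q_{\omega,c}(u,v)^2 < 9\|Q\|_{L^4}^4 \cdot D(u,v).
\end{align*}
A natural candidate is $(u,v) = (\alpha R, Q)$, where $R > 0$ is the scalar ground state of the decoupled equation $-\Delta R + \omega R = \tfrac{1}{9}R^3$ and $\alpha \in \mathbb{R}$ is to be optimized. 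Because $R,Q > 0$, the cubic cross integral $\int R^3 Q$ is strictly positive; it enters $D(\alpha R, Q)$ with a favorable sign and is precisely the mechanism that can push $D$ above the borderline level.

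The technical core is a quartic-in-$\alpha$ computation. Substituting $(u,v) = (\alpha R, Q)$ expands $Q_{\omega,c}(\alpha R,Q)^2 - 9\|Q\|_{L^4}^4 \cdot D(\alpha R,Q)$ into a polynomial whose coefficients involve $\|R\|_{L^4}^4$, $\|Q\|_{L^4}^4$, $\int R^2 Q^2$, and $\int R^3 Q$. Each of these is explicit once one exploits the Pohozaev identities for $R$ and $Q$ (which pin down $\|\nabla R\|_{L^2}^2$, $\omega\|R\|_{L^2}^2$ and their $Q$-analogues as explicit multiples of the $L^4$-norms), together with the fact that both $R$ and $Q$ are rescalings of the universal ground state $Q_{\mathrm{st}}$ of $-\Delta Q_{\mathrm{st}} + Q_{\mathrm{st}} = Q_{\mathrm{st}}^3$, at spatial scales $\sqrt{\omega}$ and $\sqrt{3\gamma\omega}$ respectively. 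I expect the main obstacle to be the sharp bookkeeping of this polynomial: optimizing in $\alpha$ (equivalently, a discriminant analysis) and tracing through the scaling ratio $\|R\|_{L^4}^4/\|Q\|_{L^4}^4 \propto 3^{N/2}(3\gamma)^{-(4-N)/2}$, together with the cross-integral estimates, must collapse cleanly to produce the stated threshold $3\gamma > (9/\sqrt 2)^{4/(N-4)}$.

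Finally, once $\mu_{\omega,c} < S_{\omega,c}(0,Q)$ is in hand, the non-semitriviality of the minimizer $(u,v) \in \mathcal{G}_{\omega,c}$ is concluded as follows: if $u = 0$ held for the minimizer, then the second equation in \eqref{equ:s} would force $v$ to be (a Galilean modulation of) the scalar ground state $Q$, whose action matches $S_{\omega,c}(0,Q)$ after the identification $\tilde v = e^{-i\gamma c\cdot x/2}v$, contradicting the strict inequality. Hence $u \neq 0$, and $v \neq 0$ follows at once from the Remark after Proposition \ref{pro} by inspecting the first equation of \eqref{equ:s} when $v = 0$.
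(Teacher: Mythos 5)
Your overall strategy coincides with the paper's: place a competitor of the form (something, multiple of $Q$) on the Nehari manifold via the normalization $\tau^2=Q_{\omega,c}/(2D)$, note that the projected action is $Q_{\omega,c}^2/(4D)$, and beat $S_{\omega,c}(0,Q)=\tfrac{9}{4}\|Q\|_{L^4}^4$ (your value for this quantity is correct). But the execution has a genuine gap. First, the overlap integrals $\int R^2Q^2$ and $\int R^3Q$ are \emph{not} explicit: $R$ and $Q$ are rescalings of the universal soliton $Q_{\mathrm{st}}$ at two \emph{different} spatial scales, $\sqrt{\omega}$ and $\sqrt{3\gamma\omega}$, so their products cannot be reduced to $\|Q_{\mathrm{st}}\|_{L^4}^4$ by Pohozaev identities; your proposed discriminant analysis of the quartic in $\alpha$ therefore cannot be completed in closed form. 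Moreover, the mechanism you single out --- positivity of the cubic cross term $\int R^3Q$ --- is not what closes the argument: that term is $O(\alpha^3)$, hence subleading both as $\alpha\to0$ (where the two sides agree to order $\alpha^0$ and the $O(\alpha^2)$ comparison hinges on the non-explicit $\int R^2Q^2$) and as $\alpha\to\infty$.

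Second, and more seriously, your trial family is too rigid to reach the stated threshold. The paper's competitor is $(\alpha_2 h, Q)$ with $h(x)=Q(\lambda x)$ and $\alpha_2\to\infty$: in that limit only the leading $\alpha_2^4$ coefficients survive, the cross term and the overlap integrals drop out entirely, and the decisive comparison becomes $Q_{\omega,c}^2(h,0)<\tfrac{1}{9}\|h\|_{L^4}^4\,Q_{\omega,0}(0,Q)$, driven by the $\tfrac{1}{36}|u|^4$ self-interaction; the hypothesis $3\gamma>(9/\sqrt2)^{4/(N-4)}$ is precisely what makes this hold at the \emph{optimal} dilation $\lambda_0=(9/\sqrt2)^{2/(N-4)}$. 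Your $R$ fixes the dilation at scale $\sqrt{\omega}$. Running the same large-$\alpha$ comparison for $(\alpha R,Q)$, using $\|\nabla R\|_{L^2}^2+\omega\|R\|_{L^2}^2=\tfrac19\|R\|_{L^4}^4$ and $\|R\|_{L^4}^4/\|Q\|_{L^4}^4=6561\,(3\gamma)^{-(4-N)/2}$, yields the requirement $3\gamma>3^{8/(4-N)}$, vastly stronger than the stated assumption (which permits values of $3\gamma$ below $1$). So the asymptotic regime of your construction fails under the lemma's hypothesis, and the intermediate-$\alpha$ regime is blocked by the non-explicit overlaps: the extra dilation parameter is essential, not bookkeeping. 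Your final step (deducing $u\neq0$ and $v\neq0$ from the strict energy inequality) is fine.
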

\begin{proof}
It suffices to prove that there exist  $\beta_1,\beta_2\in\mathbb{R}$ and the complex function $h\in H^1$ such that $(\beta_1\beta_2h,\beta_1Q)\in \mathcal{N}_{\omega,c,\alpha-3\gamma}$ and $S_{\omega,c,\alpha-3\gamma}(\beta_1\beta_2h,\beta_1Q)<S_{\omega,c,\alpha-3\gamma}(0,Q)$. In fact, $(\beta_1\beta_2h,\beta_1Q)\in \mathcal{N}_{\omega,c,\alpha-3\gamma}$ if and only if $N_{\omega,c,\alpha-3\gamma}(\beta_1\beta_2h,\beta_1Q)=0$. Since 
\begin{align*}
    N_{\omega,c,\alpha-3\gamma}(\beta_1\beta_2h,\beta_1Q)=2\mathcal{Q}_{\omega,c,\alpha-3\gamma}(\beta_1\beta_2h,\beta_1Q)-4D(\beta_1\beta_2h,\beta_1Q),
\end{align*}
then by taking $\beta_1\in\mathbb{R}$ satisfying 
\begin{align}\label{def:alpha}
    \beta_1^2=\frac{\mathcal{Q}_{\omega,c,\alpha-3\gamma}(\beta_2h,Q)}{2D(\beta_2h,Q)}.
\end{align}
We see that $N_{\omega,c,\alpha-3\gamma}(\beta_1\beta_2h,\beta_1Q)=0$. From this point, we take $\beta_1$ as in \eqref{def:alpha}.

Now, from the identity
\begin{align*}
    \mathcal{Q}_{\omega,c,\alpha-3\gamma}(\beta_1\beta_2h,\beta_1Q)=2D(\beta_1\beta_2h,\beta_1Q)
\end{align*}
and \eqref{def:alpha}, we have 
\begin{align*}
S_{\omega,c,\alpha-3\gamma}(\beta_1\beta_2h,\beta_1Q)=&\mathcal{Q}_{\omega,c,\alpha-3\gamma}(\beta_1\beta_2h,\beta_1Q)-D(\beta_1\beta_2h,\beta_1Q)\\
=&\frac{1}{2}\mathcal{Q}_{\omega,c,\alpha-3\gamma}(\beta_1\beta_2h,\beta_1Q)\\
=&\frac{\beta_1^2}{2}\mathcal{Q}_{\omega,c,\alpha-3\gamma}(\beta_2h,Q)\\
=&\frac{\mathcal{Q}_{\omega,c,\alpha-3\gamma}^2(\beta_2h,Q)}{4D(\beta_2h,Q)}.
\end{align*}
Notice that 
\begin{align*}
    S_{\omega,c,\alpha-3\gamma}(0,Q)=&\frac{1}{2}\int|\nabla Q|^2+\frac{3\gamma\omega+\alpha-3\gamma}{2}\|Q\|_{L^2}^2+\frac{1}{2}c\cdot P(0,Q)-\frac{9}{4}\int|Q|^4\\
    =&\frac{1}{2}\int|\nabla Q|^2+\frac{3\gamma\omega+\alpha-3\gamma}{2}\|Q\|_{L^2}^2-\frac{9}{4}\int|Q|^4\\
    =&{\frac{1}{2}\mathcal{Q}_{\omega,0,\alpha-3\gamma}(0,Q)},
\end{align*}
where we used the fact that $Q$ is a positive, real and radial symmetry function, $P(0,Q)=0$ and the identity
\[\int|\nabla Q|^2+(3\gamma\omega-3\gamma+\alpha)\int|Q|^2=9\int|Q|^4.\]
Thus $S_{\omega,c,\alpha-3\gamma}(\beta_1\beta_2h,\beta_1Q)<S_{\omega,0,\alpha-3\gamma}(0,Q)$ is equivalent to 
\begin{align}\label{relation:energy}
    \mathcal{Q}_{\omega,c,\alpha-3\gamma}^2(\beta_2h,Q)<{2}D(\beta_2h,Q)\mathcal{Q}_{\omega,0,\alpha-3\gamma}(0,Q).
\end{align}
Both sides of \eqref{relation:energy} are polynomials of degree $4$ in $\beta_2$. The leading coefficient of the polynomial in the left-hand side is 
\begin{align*}
    \left(\frac{1}{2}\int|\nabla h|^2+\frac{\omega}{2}|h|^2+\frac{1}{2}c\cdot P(h,0)\right)^2
\end{align*}
whereas the leading coefficient of the polynomial in the right-hand side is 
\begin{align*}
    {\frac{1}{18}}\int|h|^4\left(\frac{1}{2}\int|\nabla Q|^2+\frac{3\gamma\omega+\alpha-3\gamma}{2}|Q|^2\right).
\end{align*}
Therefore, \eqref{relation:energy} holds for $\beta_2$ sufficiently large, provided that 
\begin{align}\label{relation:b1}
    \mathcal{Q}_{\omega,c,\alpha-3\gamma}^2(h,0)<{\frac{1}{18}}\|h\|_{L^4}^4\mathcal{Q}_{\omega,0,\alpha-3\gamma}(0,Q).
\end{align}
So, it will show that \eqref{relation:b1} holds for some $h\in H^1$. For that, assume that $h(x)=Q(\lambda x)$ for some $\lambda\in\mathbb{R}$ to be determined. With this definition, \eqref{relation:b1} is equivalent to 
\begin{align}\label{relation:b2}
    \left(\frac{\lambda^{2}}{2}\int|\nabla Q|^2+\frac{\omega}{2}\int|Q|^2\right)^2<\frac{\lambda^{N}}{18}\mathcal{Q}_{\omega,0,\alpha-3\gamma}(0,Q)\|Q\|_{L^4}^4.
\end{align}
From the Pohozaev identities \eqref{Pohozaev:1} and \eqref{Pohozaev:2}, we have 
\begin{align*}
    \int|\nabla Q|^2=\frac{(3\gamma\omega+\alpha-3\gamma) N}{4-N}\int|Q|^2=\frac{9N}{4}\int|Q|^4.
\end{align*}
Injecting this into \eqref{relation:b2}, we obtain 
\begin{align}\label{relation:b3}
    &\left(\frac{\lambda^2}{2}\frac{(3\gamma\omega+\alpha-3\gamma) N}{4-N}\int|Q|^2+\frac{\omega}{2}\int|Q|^2\right)^2\notag\\
    <&\frac{\lambda^N}{18}\left(\frac{(3\gamma\omega+\alpha-3\gamma) N}{2(4-N)}\int|Q|^2+\frac{(3\gamma\omega+\alpha-3\gamma)}{2}\int|Q|^2\right)\frac{4(3\gamma\omega+\alpha-3\gamma)}{9(4-N)}\int|Q|^2.
\end{align}
That is 
\begin{align*}
\left(\lambda^2(3\gamma\omega+\alpha-3\gamma) N+(4-N)\omega\right)^2<\frac{16\lambda^N}{81}(3\gamma\omega+\alpha-3\gamma)^2.
\end{align*}
Let 
\begin{align*}
    f(\lambda)=\left(\lambda^2(3\gamma\omega+\alpha-3\gamma) N+(4-N)\omega\right)-\frac{4\lambda^{\frac{N}{2}}}{9}(3\gamma\omega+\alpha-3\gamma).
\end{align*}
It is easy to see that $f$ has a global minimum at the point $\lambda_0={9}^{\frac{2}{N-4}}$. In addition, $f(\lambda_0)=(4-N)(\omega-(3\gamma\omega+\alpha-3\gamma)\lambda_0^2)$. By assumption, $f(\lambda_0)<0$, which means that $\omega-(3\gamma\omega+\alpha-3\gamma)\lambda_0^2<0$. If we choose that $3\gamma\omega+\alpha-3\gamma>\omega{9}^{\frac{4}{4-N}}$, then we see that \eqref{relation:b3} holds for $\lambda=\lambda_0$ and the proof of this Lemma is complete.
\end{proof}

\begin{proof}[\bf Proof of Theorem \ref{Thm1}(ii).]
From Lemma \ref{lemma:trivial}, one can obtain the Theorem \ref{Thm1} (ii).
\end{proof}

Next, we give the proof of Theorem \ref{Thm-nonexistence}.
\begin{proof}[\bf Proof of Theorem \ref{Thm-nonexistence}.]
Assume that $(u_\omega,v_\omega)$ be the solution of system \eqref{equ:s}. Then, from the Pohozaev identities \eqref{Pohozaev:1} and \eqref{Pohozaev:2}, we deduce that 
\begin{align}\label{iden:1}
    (4-N)\int(|\nabla u_\omega|^2+|\nabla v_\omega|^2)=&N\omega(\|u_\omega\|_{L^2}^2+3\gamma\|v_\omega\|_{L^2}^2)\notag\\
    &+N(\alpha-3\gamma)\|v_{\omega}\|_{L^2}^2
    +(N-2)c\cdot P(u_\omega,v_\omega).
\end{align} 

{\bf Case 1:} $N=2$. In this case, by the identity \eqref{iden:1}, we have  
$$ \int(|\nabla u_\omega|^2+|\nabla v_\omega|^2)=\omega(\|u_\omega\|_{L^2}^2+3\gamma\|v_\omega\|_{L^2}^2)+(\alpha-3\gamma)\|v_{\omega}\|_{L^2}^2.$$
If $\omega\leq \min\left\{0,\frac{\alpha-3\gamma}{3\gamma}\right\}$, it follows that 
$$ \int(|\nabla u_\omega|^2+|\nabla v_\omega|^2)\leq 0,$$
then it must be $(u_\omega,v_\omega)=(0,0)$. This implies that Theorem \ref{Thm-nonexistence} (i) holds.

{\bf Case 2:}  $N=1$ or $N=3$. Notice that in this case, 
\begin{align*}
|(N-2)c\cdot P(u_\omega,v_\omega)|&\leq |c||P(u_\omega,v_\omega)|\\
&\leq |c|\big(\|u\|_{L^2}\|\nabla u\|_{L^2}+\gamma\|v\|_{L^2}\|\nabla v\|_{L^2}\big)\\
&\leq  |c| \left[\frac{1}{2}\big(\|u\|^2_{L^2}+3\gamma \|v\|^2_{L^2}\big)+\frac{1}{2}\|\nabla u\|^2_{L^2}+\frac{\gamma}{6}\|\nabla v\|^2_{L^2}\right].
\end{align*}
Combining with identity \eqref{iden:1}, then 
\begin{align*}
    &\left(4-N-\frac{|c|}{2}\right)\int |\nabla u_\omega|^2 +\left(4-N-\frac{\gamma |c|}{6}\right) \int |\nabla v_\omega|^2\\
    \leq& \left(wN+\frac{|c|}{2}\right) (\|u_\omega\|_{L^2}^2+3\gamma\|v_\omega\|_{L^2}^2)+N(\alpha-3\gamma)\|v_{\omega}\|_{L^2}^2.
\end{align*}
Thus, if $wN+\frac{|c|}{2}\leq 0$ and $(3\gamma\omega+\alpha-3\gamma) N+\frac{|c|}{2}<0$, $4-N-\frac{|c|}{2}>0$ and $4-N-\frac{\gamma |c|}{6}>0$,  
then it must be $(u_\omega,v_\omega)=(0,0)$. Thus, Theorem \ref{Thm-nonexistence} (ii) holds.
\end{proof}

\section{High frequency limit}\label{section:3}
In this section, we aim to study the high frequency limit of the boosted ground state of \eqref{equ:system} when the frequency is $\omega\to+\infty$.

\begin{lemma}\label{lemma:existence}\cite[Theorem 2.1]{OP2021AMP}
Let $1\leq N\leq3$, $\omega>0$. Then there exists at least one ground state $(u_{0,\omega},v_{0,\omega})$ to system \eqref{equ:c=0}, which is radially symmetric, $v_{0,\omega}$ is positive and $u_{0,\omega}$ is either positive or identically zero.
\end{lemma}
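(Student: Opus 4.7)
The plan is to construct a ground state for the $c=0$ system by minimizing the action on the Nehari manifold in the radial class, then recover positivity from the maximum principle. Since $c=0$ admits real-valued solutions (the conjugate $\bar{\phi}^2$ collapses to $\phi^2$), I would work with the action
\[
S(u,v) = \tfrac{1}{2}\|\nabla u\|_{L^2}^2 + \tfrac{1}{2}\|\nabla v\|_{L^2}^2 + \tfrac{\omega}{2}\|u\|_{L^2}^2 + \tfrac{3\gamma\omega}{2}\|v\|_{L^2}^2 - D(u,v)
\]
on $H^1_{\mathrm{rad}}(\mathbb{R}^N) \times H^1_{\mathrm{rad}}(\mathbb{R}^N)$, introduce the Nehari functional $N(u,v) = 2Q(u,v) - 4D(u,v)$, and set $\mu = \inf\{S(u,v): (u,v)\in \mathcal{N}\}$.

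The execution would mirror Section~\ref{section:2}, but is cleaner because $c = 0$ removes the momentum term and restores radial invariance. First, the Gagliardo--Nirenberg estimate used in Lemma~\ref{lemma:positive} forces $\mu > 0$ and makes any minimizing sequence bounded, via the identity $S = \tfrac{1}{2}Q + \tfrac{1}{4}N$ so that $\mu = \inf\{\tfrac{1}{2}Q: N = 0\}$. Second, I would extract a weak limit $(u,v)$; for $N = 2, 3$ the compact embedding $H^1_{\mathrm{rad}}(\mathbb{R}^N)\hookrightarrow L^4(\mathbb{R}^N)$ lets me pass to the limit directly in $D$, while for $N = 1$ I would appeal to Lions' concentration--compactness combined with translation invariance, exactly as in Lemma~\ref{lemma:strong}. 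Combined with the dilation monotonicity of Lemma~\ref{lemma:upper}, this gives $N(u,v)=0$ and $S(u,v) = \mu$. Third, the Lagrange multiplier argument of Lemma~\ref{lemma:a} yields $S'(u,v) = 0$ because $\langle N'(u,v),(u,v)\rangle = -4Q(u,v) < 0$, so $(u,v)$ solves \eqref{equ:c=0}.

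To pin down the qualitative properties, I would replace $(u,v)$ by the Schwarz symmetrizations $(u^*,v^*)$: the quadratic part $Q$ is preserved, and the diagonal quartic terms as well as $\int |u|^2|v|^2$ can only increase under rearrangement by Riesz. For the trilinear cross term $\tfrac{1}{9}\int u^3 v$, I would first choose signs so that $u,v\geq 0$ on their supports (flipping $v\to -v$ if necessary so that the coupling is nonnegative) and then rearrange; the minimality then forces $u, v$ to be radial and nonnegative. Finally, the strong maximum principle applied to the second equation, whose right-hand side $(9v^2 + 2u^2)v + \tfrac{1}{9}u^3$ is nonnegative for $u,v\geq 0$, yields $v > 0$ (it cannot vanish identically, for if $v\equiv 0$ then the first equation forces $u\equiv 0$); applying it to the first equation yields either $u > 0$ or $u \equiv 0$.

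The main obstacle I anticipate is the interaction between sign choices and Schwarz symmetrization for the trilinear coupling $\int u^3 v$: unlike the other pieces of $D$, this term is not even in $u$, so one cannot simply pass to $|u|$. The remedy above (fix compatible signs first, then symmetrize, and note that the semi-trivial branch $u\equiv 0$ is consistent with the statement) handles it, but makes this the only nontrivial departure from the standard single-equation argument of Berestycki--Lions.
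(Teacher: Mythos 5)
The paper does not actually prove this lemma: it is imported wholesale from \cite[Theorem 2.1]{OP2021AMP}, so there is no internal argument to compare against. Your sketch is, in substance, a correct reconstruction of the standard variational proof used in that reference (Nehari minimization, positivity of the level via Gagliardo--Nirenberg, compactness, Lagrange multipliers with $\langle N'(u,v),(u,v)\rangle=-4Q(u,v)<0$, rearrangement, strong maximum principle), and you correctly identify the cubic cross term $\tfrac19\int u^3v$ as the only delicate point. Two refinements would make it airtight. First, since $\mathcal{G}_{\omega,0}$ is defined over complex-valued pairs, reduce to real nonnegative competitors at the outset via the diamagnetic inequality $\|\nabla|u|\|_{L^2}\le\|\nabla u\|_{L^2}$ together with $\Re(\bar u^3v)\le |u|^3|v|$: these give $Q(|u|,|v|)\le Q(u,v)$ and $D(|u|,|v|)\ge D(u,v)$, so rescaling $(|u|,|v|)$ back onto the Nehari manifold does not increase the action. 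This handles the sign issue in one stroke (your flip $v\mapsto -v$ is not a symmetry of $D$, though it is legitimate as a one-sided improvement showing the cross term is nonnegative at a minimizer) and simultaneously shows that a real nonnegative minimizer is a genuine ground state over the complex space. Second, if you minimize only in the radial class, the Schwarz symmetrization step is not merely cosmetic: it is what identifies the radial infimum with the unrestricted one, hence guarantees that your critical point has least action among \emph{all} nontrivial solutions, which is what ``ground state'' means in the set $\mathcal{G}_{\omega,0}$. Also note that Strauss compactness genuinely fails for $N=1$ (even functions are not enough), so your separate appeal to concentration--compactness there, or a further restriction to even decreasing functions, is indeed necessary. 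With these adjustments the argument is complete and coincides with the cited proof.
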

Now we give the following scaling property.
\begin{lemma}\label{lemma:scaling}
Let $(\omega,c)\in \mathbb{R}^+\times\mathbb{R}^N\backslash\{0\}$. Then 
\begin{align*}
    \mu_{\omega,c,\alpha-3\gamma}=|\sigma|^{4-N}\mu_{\frac{\omega}{\sigma^2},\frac{c}{\sigma},\frac{\alpha-3\gamma}{\sigma^2}},~~\sigma>0.
\end{align*}
\end{lemma}
\begin{proof}
Let $\{(u_n,v_n)\}$ be a minimizing sequence for $\mu_{\omega,c,\alpha-3\gamma}$, that is,
\begin{align*}
    N_{\omega,c,\alpha-3\gamma}(u_n,v_n)=0,~~S_{\omega,c,\alpha-3\gamma}\to\mu_{\omega,c,\alpha-3\gamma},
\end{align*}
where $N_{\omega,c,\alpha-3\gamma}$ and $S_{\omega,c,\alpha-3\gamma}$ are defined in \eqref{def:N} and \eqref{def:functional}, respectively.

Let $(\Tilde{u}_n,\Tilde{v}_n)=\left(\frac{1}{\sigma}u_n\left(\frac{x}{\sigma}\right),\frac{1}{\sigma}v_n\left(\frac{x}{\sigma}\right)\right)$, $\sigma>0$. Then
\begin{align*}
    N_{\omega,c,\alpha-3\gamma}(u_n,v_n)=&2\mathcal{Q}_{\omega,c,\alpha-3\gamma}(u_n,v_n)-4D(u_n,v_n)\\
    =&2\Big(\frac{\sigma^{4-N}}{2}\int|\nabla\Tilde{u}_n|^2+\frac{\sigma^{4-N}}{2}\int|\nabla\Tilde{v}_n|^2+\frac{\omega\sigma^{2-N}}{2}\int|\Tilde{u}_n|^2\\
    &+\frac{(3\gamma\omega+\alpha-3\gamma)\sigma^{2-N}}{2}\int|\Tilde{v}_n|^2+\frac{\sigma^{3-N}}{2}c\cdot P(\Tilde{u}_n,\Tilde{v}_n)\Big)-4\sigma^{4-N}D(\Tilde{u}_n,\Tilde{v}_n)\\
    =&2\sigma^{4-N}\Big(\frac{1}{2}\int|\nabla\Tilde{u}_n|^2+\frac{1}{2}\int|\nabla\Tilde{v}_n|^2+\frac{\omega\sigma^{-2}}{2}\int|\Tilde{u}_n|^2\\
    &+\frac{(3\gamma\omega+\alpha-3\gamma)\sigma^{-2}}{2}\int|\Tilde{v}_n|^2+\frac{\sigma^{-1}}{2}c\cdot P(\Tilde{u}_n,\Tilde{v}_n)\Big)-4\sigma^{4-N}D(\Tilde{u}_n,\Tilde{v}_n)\\
    =&\sigma^{4-N}N_{\frac{\omega}{\sigma^2},\frac{c}{\sigma},\frac{\alpha-3\gamma}{\sigma^2}}(\Tilde{u}_n,\Tilde{v}_n),
\end{align*}
and 
\begin{align*}
    S_{\omega,c,\alpha-3\gamma}(u_n,v_n)=&\frac{\sigma^{4-N}}{2}\int|\nabla\Tilde{u}_n|^2+\frac{\sigma^{4-N}}{2}\int|\nabla\Tilde{v}_n|^2+\frac{\omega\sigma^{2-N}}{2}\int|\Tilde{u}_n|^2\\
    &+\frac{(3\gamma\omega+\alpha-3\gamma)\sigma^{2-N}}{2}\int|\Tilde{v}_n|^2+\frac{\sigma^{3-N}}{2}c\cdot P(\Tilde{u},\Tilde{v})-\sigma^{4-N}D(\Tilde{u}_n,\Tilde{v}_n)\\
    =&\sigma^{4-N}S_{\frac{\omega}{\sigma^2},\frac{c}{\sigma},\frac{\alpha-3\gamma}{\sigma^2}}(\Tilde{u}_n,\Tilde{v}_n).
\end{align*}
Hence, we have
\begin{align*}
    &N_{\frac{\omega}{\sigma^2},\frac{c}{\sigma},\frac{\alpha-3\gamma}{\sigma^2}}(\Tilde{u}_n,\Tilde{v}_n)=|\sigma|^{N-4}N_{\omega,c,\alpha-3\gamma}(u_n,v_n)=0,\\
     &S_{\frac{\omega}{\sigma^2},\frac{c}{\sigma},\frac{\alpha-3\gamma}{\sigma^2}}(\Tilde{u}_n,\Tilde{v}_n)=|\sigma|^{N-4}S_{\omega,c,\alpha-3\gamma}(u_n,v_n)\to|c|^{N-4}\mu_{\omega,c,\alpha-3\gamma}.        
\end{align*}
This means that we can obtain the desired result.
\end{proof}

Let 
\begin{align}\label{def:tilde:u}
    (\Tilde{u}_\omega,\Tilde{v}_\omega)=\frac{1}{\omega^{\frac{1}{2}}}\left(u\left(\frac{x}{\omega^{\frac{1}{2}}}\right),v\left(\frac{x}{\omega^{\frac{1}{2}}}\right)\right),~~\omega>0.
\end{align}
From Lemma \ref{lemma:scaling}, we have
\begin{align*}
    N_{1,\frac{c}{\sqrt{\omega}},\frac{\alpha-3\gamma}{\omega}}(\Tilde{u}_\omega,\Tilde{v}_\omega)
    =\frac{1}{\omega^{\frac{4-N}{2}}}N_{\omega,c,\alpha-3\gamma}(u,v),~~ S_{1,\frac{c}{\sqrt{\omega}},\frac{\alpha-3\gamma}{\omega}}(\Tilde{u}_\omega,\Tilde{v}_\omega)
    =\frac{1}{\omega^{\frac{4-N}{2}}}S_{\omega,c,\alpha-3\gamma}(u,v).
\end{align*}
This means that 
$$\mu_{\omega,c,\alpha-3\gamma}=\omega^{\frac{4-N}{2}} \mu_{1,\frac{c}{\sqrt{\omega}},\frac{\alpha-3\gamma}{\omega}}.$$
So, $(u,v)$ is a minimizer of $\mu_{\omega,c,\alpha-3\gamma}$ is equivalent to the fact that $(\Tilde{u},\Tilde{v})$ is a minimizer of $\mu_{1,\frac{c}{\sqrt{\omega}},\frac{\alpha-3\gamma}{\omega}}$.

Now we give the following uniformly bounded lemma.
\begin{lemma}\label{lemma:u:bound}
Let $1\leq N\leq 3$. Then
$(\Tilde{u}_\omega,\Tilde{v}_\omega)$ is uniformly bounded in $H^1(\mathbb{R}^N)\times H^1(\mathbb{R}^N)$ as $|c|$ is bounded and $\omega\rightarrow +\infty$.
\end{lemma}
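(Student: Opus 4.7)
My plan is to exploit the scaling identity just derived, which asserts that $(\tilde u_\omega,\tilde v_\omega)$ is a minimizer of $\mu_{1,\tilde c}$ where $\tilde c:=c/\sqrt{\omega}$. Since the minimizer lies on the Nehari manifold $\mathcal{N}_{1,\tilde c}$, the relation $N_{1,\tilde c}=2Q_{1,\tilde c}-4D=0$ reduces the energy to $S_{1,\tilde c}=Q_{1,\tilde c}/2$, so
\begin{align*}
\mu_{1,\tilde c}=\tfrac12 Q_{1,\tilde c}(\tilde u_\omega,\tilde v_\omega).
\end{align*}
Thus the problem reduces to (a) bounding $\mu_{1,\tilde c}$ from above uniformly in $\tilde c$, and (b) showing that $Q_{1,\tilde c}$ is coercive on $H^1\times H^1$ uniformly for $|\tilde c|$ small.

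For coercivity, I would estimate the momentum term by Cauchy-Schwarz and Young's inequality:
\begin{align*}
\tfrac12|\tilde c\cdot P(\tilde u,\tilde v)|\le \tfrac{|\tilde c|}{2}\bigl(\|\tilde u\|_{L^2}\|\nabla\tilde u\|_{L^2}+\gamma\|\tilde v\|_{L^2}\|\nabla\tilde v\|_{L^2}\bigr),
\end{align*}
which, for $|\tilde c|=|c|/\sqrt{\omega}$ smaller than a threshold depending only on $\gamma$, is absorbed into at most half of the quadratic terms in $Q_{1,\tilde c}$. Concretely, once $\omega$ is sufficiently large that $|c|/\sqrt{\omega}\le\min\{1,1/\gamma\}/2$, say, one obtains a constant $C>0$, independent of $\tilde c$, such that
\begin{align*}
Q_{1,\tilde c}(\tilde u_\omega,\tilde v_\omega)\ge C\bigl(\|\tilde u_\omega\|_{H^1}^2+\|\tilde v_\omega\|_{H^1}^2\bigr).
\end{align*}

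For the upper bound on $\mu_{1,\tilde c}$, I would pick any fixed nonzero test pair $(\phi_0,\psi_0)\in H^1\times H^1$ with $D(\phi_0,\psi_0)>0$ (e.g.\ the ground state at $\omega=1$, $c=0$ from Lemma \ref{lemma:existence}), and project it onto $\mathcal{N}_{1,\tilde c}$ by choosing $\lambda_{\tilde c}^2=Q_{1,\tilde c}(\phi_0,\psi_0)/(2D(\phi_0,\psi_0))$. Then $(\lambda_{\tilde c}\phi_0,\lambda_{\tilde c}\psi_0)\in\mathcal{N}_{1,\tilde c}$ and
\begin{align*}
\mu_{1,\tilde c}\le S_{1,\tilde c}(\lambda_{\tilde c}\phi_0,\lambda_{\tilde c}\psi_0)=\frac{Q_{1,\tilde c}(\phi_0,\psi_0)^2}{4\,D(\phi_0,\psi_0)}.
\end{align*}
Because $Q_{1,\tilde c}(\phi_0,\psi_0)\to Q_{1,0}(\phi_0,\psi_0)$ continuously as $\tilde c\to 0$, this upper bound stays bounded, say by a constant $M$, for all $|\tilde c|$ small.

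Combining the two bounds gives
\begin{align*}
\|\tilde u_\omega\|_{H^1}^2+\|\tilde v_\omega\|_{H^1}^2\le \frac{2\mu_{1,\tilde c}}{C}\le \frac{2M}{C},
\end{align*}
which is the required uniform $H^1$-bound. The only real subtlety is the momentum term $\tilde c\cdot P(\tilde u_\omega,\tilde v_\omega)$, whose sign is indefinite (as flagged in Comment 2 after Theorem \ref{Thm:limit}); however, since $|\tilde c|=|c|/\sqrt{\omega}\to 0$ under the hypotheses, this indefiniteness is harmless and is absorbed by the positive quadratic part of $Q_{1,\tilde c}$, so no deeper argument is needed here.
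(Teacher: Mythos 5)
Your proof is correct and follows essentially the same strategy as the paper: bound $\mu_{1,c/\sqrt{\omega}}$ uniformly from above by testing the Nehari infimum with the $c=0$ ground state, then combine $\mu_{1,c/\sqrt{\omega}}=\tfrac12 Q_{1,c/\sqrt{\omega}}(\Tilde{u}_\omega,\Tilde{v}_\omega)$ with coercivity of $Q_{1,c/\sqrt{\omega}}$ for $|c|/\sqrt{\omega}$ small. The only cosmetic difference is that the paper exploits the vanishing momentum of the radial minimizer to place it directly on $\mathcal{N}_{1,c/\sqrt{\omega}}$ (giving $\mu_{1,c/\sqrt{\omega}}\leq\mu_{1,0}$ exactly), whereas you rescale a test pair onto the Nehari manifold and invoke continuity in $\tilde c$; you also spell out the absorption of the momentum term, which the paper leaves implicit.
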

\begin{proof}
Let $(u_{0,\omega}, v_{0,\omega})$ be a radial minimizer of $\mu_{1,0,\frac{\alpha-3\gamma}{\omega}}$ given by Lemma \ref{lemma:existence}, then 
$$\frac{c}{\sqrt{\omega}}\cdot P(u_{0,\omega}, v_{0,\omega})=0.$$
From this fact, we deduce that 
$$N_{1,\frac{c}{\sqrt{\omega}},\frac{\alpha-3\gamma}{\omega}}(u_{0,\omega}, v_{0,\omega})=0.$$
This implies that 
\begin{align}\label{relation:1a}
    \mu_{1,\frac{c}{\sqrt{\omega}},\frac{\alpha-3\gamma}{\omega}}\leq \mu_{1,0,\frac{\alpha-3\gamma}{\omega}}.
\end{align}
Next, we will consider the relation between $\mu_{1,0,\frac{\alpha-3\gamma}{\omega}}$ and $\mu_{1,0,0}$.

Let $(u_{0,0},v_{0,0})$ be a minimizer of $\mu_{1,0,0}$, where $\omega=1$ and $\alpha=3\gamma$ (see Lemma \ref{lemma:existence}).  From the  Pohozaev identities \eqref{Pohozaev:1} and \eqref{Pohozaev:2},   $(u_{0,0},v_{0,0})$ satisfies 
\begin{align}\label{inden:1a}
&\frac{N-2}{2}\int(|\nabla u_{0,0}|^2+|\nabla v_{0,0}|^2)+\frac{N}{2}\left(\|u_{0,0}\|_{L^2}^2+3\gamma\| v_{0,0}\|_{L^2}^2\right)
    =ND(u_{0,0},v_{0,0}).
\end{align}
Since $(u_{0,0},v_{0,0})$ is the solution of \eqref{equ:s} with $\omega=1$ and $\alpha=3\gamma$, then, from Lemma \ref{lemma:scaling}, we have
\begin{align}\label{inden:2a}
    N_{1,0,0}(u_{0,0},v_{0,0})=&2\mathcal{Q}_{1,0,0}(u_{0,0},v_{0,0})-4D(u_{0,0},v_{0,0})=0.
\end{align} 
By the definition of $N_{1,0,0}$ (see \eqref{def:N}), \eqref{inden:2a} is equivalent to
\begin{align}\label{inden:3a}
    \int\left(|\nabla u_{0,0}|^2+|\nabla v_{0,0}|^2\right)&+\left(\|u_{0,0}\|_{L^2}^2+3\gamma\|v_{0,0}\|_{L^2}^2\right)-4D(u_{0,0},v_{0,0})=0.
\end{align}
Combining \eqref{inden:1a} and \eqref{inden:3a}, we get
\begin{align}\label{inden:4a}
   &(4-N)\int\left(|\nabla u_{0,0}|^2+|\nabla v_{0,0}|^2\right)
=N\left(\|u_{0,0}\|_{L^2}^2+3\gamma\|v_{0,0}\|_{L^2}^2\right).
\end{align}


Let
\begin{align*}
    \left(u_{0,0}^{\sigma}(x),v_{0,0}^{\sigma}(x)\right)=\left(u_{0,0}\left(\frac{x}{\sigma}\right),v_{0,0}\left(\frac{x}{\sigma}\right)\right),~~\sigma>0.
\end{align*}
Then
\begin{align*}
    N_{1,0,\frac{\alpha-3\gamma}{\omega}}\left(u_{0,0}^{\sigma},v_{0,0}^{\sigma}\right)=&\sigma^{N-2}\int\left(|\nabla u_{0,0}|^2+|\nabla  v_{0,0}|^2\right)+\sigma^N\left(\|u_{0,0}\|_{L^2}^2+3\gamma\|v_{0,0}\|_{L^2}^2\right)\\
    &+\sigma^N\frac{\alpha-3\gamma}{\omega}\|v_{0,0}\|_{L^2}^2-4\sigma^ND(u_{0,0},v_{0,0}),
\end{align*}
where $N_{1,0,\frac{\alpha-3\gamma}{\omega}}$ is defined in \eqref{def:N}.

From \eqref{inden:1a}, \eqref{inden:3a} and \eqref{inden:4a}, we have
\begin{align*}
    & N_{1,0,\frac{\alpha-3\gamma}{\omega}}\left(u_{0,0}^{\sigma},v_{0,0}^{\sigma}\right)\notag\\
    =&\sigma^{N-2}\int\left(|\nabla u_{0,0}|^2+|\nabla v_{0,0}|^2\right)-\frac{N\sigma^N}{4-N}\left(\|u_{0,0}\|_{L^2}^2+3\gamma\| v_{0,0}\|_{L^2}^2\right)-\frac{\sigma^N}{4-N}\frac{N(\alpha-3\gamma)}{\omega}\|v_{0,0}\|_{L^2}^2\notag\\
    =&\frac{\sigma^{N-2}}{4-N}N\left(\|u_{0,0}\|_{L^2}^2+3\gamma\| v_{0,0}\|_{L^2}^2\right)-\frac{N\sigma^N}{4-N}(\left(\|u_{0,0}\|_{L^2}^2+3\gamma\| v_{0,0}\|_{L^2}^2+\frac{\alpha-3\gamma}{\omega}\|v_{0,0}\|_{L^2}^2\right).
\end{align*}
If $\alpha-3\gamma>0$, then there exists $\sigma\in(0,1)$ such that $N_{1,0,\frac{\alpha-3\gamma}{\omega}}\left(u_{0,0}^{\sigma},v_{0,0}^{\sigma}\right)=0$. If $\alpha-3\gamma<0$, then $\frac{\alpha-3\gamma}{\omega}\|v_{0,0}\|_{L^2}^2\to0$ as $\omega\to\infty$. This implies that there exists $\sigma\in(1,2)$ such that $N_{1,0,\frac{\alpha-3\gamma}{\omega}}\left(u_{0,0}^{\sigma},v_{0,0}^{\sigma}\right)=0$ as $\omega\to\infty$. From the above argument, we can easily obtain that $\sigma\to1$ as $\omega\to\infty$. In particular,
if $\alpha-3\gamma=0$, obviously, $\sigma=1$ and $N_{1,0,\frac{\alpha-3\gamma}{\omega}}\left(u_{0,0}^{\sigma},v_{0,0}^{\sigma}\right)=0$.

Now, from the identity \eqref{inden:2a}, we deduce that 
\begin{align*}
    &\sigma^{N-2}S_{1,0,\frac{\alpha-3\gamma}{\omega}}\left(u_{0,\omega}^{\sigma},v_{0,\omega}^{\sigma}\right)\\
    =&\frac{1}{2}\int\left(|\nabla u_{0,0}|^2+|\nabla v_{0,0}|^2\right)+\frac{\sigma^2}{2}\left(\|u_{0,0}\|_{L^2}^2+3\gamma\|u_{0,0}\|_{L^2}^2\right)+\frac{\sigma^2(\alpha-3\gamma)}{\omega}\|v_{0,0}\|_{L^2}^2-\sigma^2D(u_{0,0},v_{0,0})\notag\\
    =&\frac{1}{2}\int\left(|\nabla u_{0,0}|^2+|\nabla v_{0,0}|^2\right)+\sigma^2\frac{1}{4}\left(\|u_{0,0}\|_{L^2}^2+3\gamma\|v_{0,0}\|_{L^2}^2\right)\\
    &-\frac{\sigma^2}{4}\int\left(|\nabla u_{0,0}|^2+|\nabla v_{0,0}|^2\right)+\frac{\sigma^2(\alpha-3\gamma)}{\omega}\|v_{0,0}\|_{L^2}^2\notag\\
    =&\left(\frac{1}{2}-\frac{\sigma^2}{4}\right)\int\left(|\nabla u_{0,0}|^2+|\nabla v_{0,0}|^2\right)+\sigma^2\frac{1}{4}\left(\|u_{0,0}\|_{L^2}^2+3\gamma\|v_{0,0}\|_{L^2}^2\right)+\frac{\sigma^2(\alpha-3\gamma)}{\omega}\|v_{0,0}\|_{L^2}^2\\
    =&\frac{1}{2}\mathcal{Q}_{1,0,0}(u_{0,0},v_{0,0})+\frac{1}{4}(1-\sigma^2)\int\left(|\nabla u_{0,0}|^2+|\nabla v_{0,0}|^2\right)+\frac{\sigma^2(\alpha-3\gamma)}{\omega}\|v_{0,0}\|_{L^2}^2\\
    &+\frac{1}{4}(\sigma^2-1)\left(\|u_{0,0}\|_{L^2}^2+3\gamma\|v_{0,0}\|_{L^2}^2\right)\notag\\
    =&\frac{1}{2}\mathcal{Q}_{1,0,0}(u_{0,0},v_{0,0})+\frac{\sigma^2(\alpha-3\gamma)}{\omega}\|v_{0,0}\|_{L^2}^2+o(1)\\
    =&\mu_{1,0,0}+\frac{\sigma^2(\alpha-3\gamma)}{\omega}\|v_{0,0}\|_{L^2}^2+o(1),
\end{align*}
where we used the fact that  $\mu_{1,0,0}=\frac{1}{2}\mathcal{Q}_{1,0,0}$ and  $\mathcal{Q}_{1,0,0}$ is defined by \eqref{def:Q}. This means that 
\begin{align*}
    \sigma^{N-2}\mu_{1,0,\frac{\alpha-3\gamma}{\omega}}=\mu_{1,0,0}+\frac{\sigma^2(\alpha-3\gamma)}{\omega}\|v_{0,0}\|_{L^2}^2+o(1).
\end{align*}
Then we deduce that 
\begin{align}\label{relation:2a}
    \mu_{1,0,\frac{\alpha-3\gamma}{\omega}}\leq \mu_{1,0,0}+C,~~\text{as}~~\omega\to\infty,
\end{align}
since $\sigma\to1$ as $\omega\to\infty$.
Combining \eqref{relation:1a} and \eqref{relation:2a}, we can obtain that $(\Tilde{u}_{\omega},\Tilde{v}_{\omega})$ is uniformly bounded in $H^1(\mathbb{R}^N)\times H^1(\mathbb{R}^N)$ as $|c|$ is bounded and $\omega\rightarrow +\infty$.

\end{proof}
In the next lemma we will give the relation between $\mu_{1,0,0}$ and $\mu_{1,\frac{c}{\sqrt{\omega}},\frac{\alpha-3\gamma}{\omega}}$.

\begin{lemma}\label{lemma:relation}
Let $1\leq N\leq 3$. Then $\mu_{1,0,0}-\mu_{1,\frac{c}{\sqrt{\omega}},\frac{\alpha-3\gamma}{\omega}}\rightarrow 0$ as $|c|$ is bounded and $\omega\rightarrow +\infty$.
\end{lemma}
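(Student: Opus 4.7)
The plan is to prove the two-sided squeeze $0 \leq \mu_{1,0}-\mu_{1,c/\sqrt\omega}\to 0$, with the left inequality essentially already in hand and the real work on the right. For the left inequality, the radial minimizer $(u_0,v_0)$ of $\mu_{1,0}$ from Lemma \ref{lemma:existence} satisfies $P(u_0,v_0)=0$, so it also lies in $\mathcal{N}_{1,c/\sqrt\omega}$ and achieves the same energy; this is exactly the argument that produced \eqref{relation:1}, and it gives $\mu_{1,c/\sqrt\omega}\leq\mu_{1,0}$.

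For the upper bound, set $c_\omega=c/\sqrt\omega\to 0$. By Proposition \ref{pro}, pick for each large $\omega$ a minimizer $(U_\omega,V_\omega)\in a_{1,c_\omega}$, so that
\begin{align*}
N_{1,c_\omega}(U_\omega,V_\omega)=0, \quad S_{1,c_\omega}(U_\omega,V_\omega)=\mu_{1,c_\omega}=\tfrac{1}{2}Q_{1,c_\omega}(U_\omega,V_\omega)=D(U_\omega,V_\omega).
\end{align*}
Lemma \ref{lemma:u:bound}, applied after the scaling $(\tilde u_\omega,\tilde v_\omega)\leftrightarrow (U_\omega,V_\omega)$, gives that $(U_\omega,V_\omega)$ is uniformly bounded in $H^1\times H^1$, whence $P(U_\omega,V_\omega)=O(1)$ and $c_\omega\cdot P(U_\omega,V_\omega)\to 0$. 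Writing $Q_{1,c_\omega}=Q_{1,0}+\tfrac{1}{2}c_\omega\cdot P$, the Nehari constraint becomes
\begin{align*}
N_{1,0}(U_\omega,V_\omega)=-\,c_\omega\cdot P(U_\omega,V_\omega)\longrightarrow 0.
\end{align*}

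The key step is to rescale $(U_\omega,V_\omega)$ back onto $\mathcal{N}_{1,0}$. Define $\lambda_\omega>0$ by $\lambda_\omega^2=Q_{1,0}(U_\omega,V_\omega)/(2D(U_\omega,V_\omega))$, so $(\lambda_\omega U_\omega,\lambda_\omega V_\omega)\in\mathcal{N}_{1,0}$ provided the fraction is positive. Since $D(U_\omega,V_\omega)=\mu_{1,c_\omega}$ and Lemma \ref{lemma:positive} together with the uniform coercivity of $Q_{1,c_\omega}$ on $H^1\times H^1$ for small $c_\omega$ forces $\mu_{1,c_\omega}\geq \delta>0$ uniformly, $D(U_\omega,V_\omega)$ stays bounded away from zero, and $Q_{1,0}(U_\omega,V_\omega)=2D(U_\omega,V_\omega)+\tfrac{1}{2}N_{1,0}(U_\omega,V_\omega)$ converges to $2D(U_\omega,V_\omega)$; consequently $\lambda_\omega\to 1$. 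Then
\begin{align*}
\mu_{1,0}\leq S_{1,0}(\lambda_\omega U_\omega,\lambda_\omega V_\omega)=\tfrac{1}{2}\lambda_\omega^2 Q_{1,0}(U_\omega,V_\omega)=\lambda_\omega^2\bigl(\mu_{1,c_\omega}-\tfrac{1}{4}c_\omega\cdot P(U_\omega,V_\omega)\bigr),
\end{align*}
and passing to the liminf in $\omega$ yields $\mu_{1,0}\leq \liminf_{\omega\to\infty}\mu_{1,c_\omega}$. Combined with $\mu_{1,c_\omega}\leq\mu_{1,0}$, this finishes the proof.

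The main obstacle is verifying that $\lambda_\omega$ is well defined and converges to $1$; this rests on showing the uniform positivity of $\mu_{1,c_\omega}$ (equivalently $D(U_\omega,V_\omega)$) as $c_\omega\to 0$. This in turn follows from the Gagliardo--Nirenberg estimate used in Lemma \ref{lemma:positive} together with the observation that for $|c_\omega|$ small enough, $Q_{1,c_\omega}$ is comparable to the $H^1\times H^1$ norm with constants independent of $c_\omega$, so the proof of Lemma \ref{lemma:positive} yields a uniform lower bound. Everything else—the $H^1$ boundedness, the vanishing of the momentum term, and the bootstrapped convergence of $Q_{1,0}(U_\omega,V_\omega)\to 2\mu_{1,0}$—is then routine.
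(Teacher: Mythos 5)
Your proposal is correct and follows the same overall squeeze strategy as the paper: the lower inequality $\mu_{1,c/\sqrt\omega}\leq\mu_{1,0}$ comes from testing with the radial minimizer (exactly \eqref{relation:1}), and the upper inequality comes from taking a minimizer of $\mu_{1,c/\sqrt\omega}$, rescaling it onto $\mathcal{N}_{1,0}$ with a parameter $\lambda_\omega\to1$, and using the vanishing of the momentum term $c_\omega\cdot P\to0$. The genuine difference is the choice of rescaling. The paper uses the spatial dilation $(\Tilde{u}_\omega(x/\lambda),\Tilde{v}_\omega(x/\lambda))$, under which $Q$ and $D$ scale inhomogeneously ($\lambda^{N-2}$ versus $\lambda^N$); this forces the authors to invoke the Pohozaev identities \eqref{Pohozaev:1}--\eqref{Pohozaev:2} to rewrite $N_{1,0}(\Tilde{u}_\omega^\lambda,\Tilde{v}_\omega^\lambda)$, to prove the auxiliary lower bounds \eqref{lower:bound}, and to run a two-case analysis on the sign of $\frac{c}{\sqrt\omega}\cdot P$ to locate $\lambda_\omega$. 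You instead use the scalar multiplication $(\lambda U_\omega,\lambda V_\omega)$, for which the Nehari projection is explicit, $\lambda_\omega^2=Q_{1,0}/(2D)=1-c_\omega\cdot P/(4D)$, so $\lambda_\omega\to1$ follows in one line once $D(U_\omega,V_\omega)=\mu_{1,c_\omega}$ is bounded below; this is precisely the mechanism already used in Lemma \ref{lemma:upper}, so it stays entirely within the paper's own toolkit while avoiding the Pohozaev computation and the case split. Your route is shorter and arguably cleaner; the one point you should make fully explicit is the uniform lower bound $\mu_{1,c_\omega}\geq\delta>0$ for small $|c_\omega|$, which, as you indicate, follows from rerunning the proof of Lemma \ref{lemma:positive} with constants uniform in $|c_\omega|\leq1$ (the paper obtains the analogous bound via the Gagliardo--Nirenberg step leading to \eqref{lower:bound}).
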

\begin{proof}
From the  Pohozaev identities \eqref{Pohozaev:1} and \eqref{Pohozaev:2},   $(\Tilde{u}_\omega,\Tilde{v}_\omega)$ satisfies 
\begin{align}\label{inden:1}
&\frac{N-2}{2}\int(|\nabla\Tilde{u}_w|^2+|\nabla\Tilde{v}_w|^2)+\frac{N}{2}\left(\|\Tilde{u}_\omega\|_{L^2}^2+3\gamma\| \Tilde{v}_\omega\|_{L^2}^2\right)+\frac{N}{2}\frac{\alpha-3\gamma}{\omega}\|\Tilde{v}_\omega\|_{L^2}^2\notag\\
&+\frac{N-1}{2}\frac{c}{\sqrt{\omega}}\cdot P(\Tilde{u}_\omega,\Tilde{v}_\omega)
    =ND(\Tilde{u}_\omega,\Tilde{v}_\omega).
\end{align}
Since $(\Tilde{u}_{\omega},\Tilde{v}_{\omega})$ is the solution of \eqref{equ:s}, then we have
\begin{align}\label{inden:2}
    N_{1,\frac{c}{\sqrt{\omega}},\frac{\alpha-3\gamma}{\omega}}(\Tilde{u}_{\omega},\Tilde{v}_{\omega})=2\mathcal{Q}_{1,\frac{c}{\sqrt{\omega}},\frac{\alpha-3\gamma}{\omega}}(\Tilde{u}_{\omega},\Tilde{v}_{\omega})-4D(\Tilde{u}_{\omega},\Tilde{v}_{\omega})=\omega^{\frac{N-4}{2}}N_{\omega,c,\alpha-3\gamma}(u,v)=0.
\end{align}
By the definition of $N_{1,\frac{c}{\sqrt{\omega}},\frac{\alpha-3\gamma}{\omega}}$ (see \eqref{def:N}), \eqref{inden:2} is equivalent to
\begin{align}\label{inden:3}
    \int\left(|\nabla\Tilde{u}_{\omega}|^2+|\nabla\Tilde{v}_{\omega}|^2\right)&+\left(\|\Tilde{u}_{\omega}\|_{L^2}^2+3\gamma\|\Tilde{v}_{\omega}\|_{L^2}^2\right)+\frac{\alpha-3\gamma}{\omega}\|\Tilde{v}_\omega\|_{L^2}^2\notag\\
    &+\frac{c}{\sqrt{\omega}}\cdot P(\Tilde{u}_{\omega},\Tilde{v}_{\omega})-4D(\Tilde{u}_{\omega},\Tilde{v}_{\omega})=0.
\end{align}
Combining \eqref{inden:1} and \eqref{inden:3}, we get
\begin{align}\label{inden:4}
   &(4-N)\int\left(|\nabla \Tilde{u}_{\omega}|^2+|\nabla \Tilde{v}_{\omega}|^2\right)\notag\\
=&N\left(\|\Tilde{u}_{\omega}\|_{L^2}^2+3\gamma\|\Tilde{v}_{\omega}\|_{L^2}^2\right)+\frac{N(\alpha-3\gamma)}{\omega}\|\Tilde{v}_\omega\|_{L^2}^2+(N-2)\frac{c}{\sqrt{\omega}}\cdot P(\Tilde{u}_{\omega},\Tilde{v}_{\omega}).
\end{align}
Notice that $|c|$ is bounded, by H\"older inequality, Young inequality and Lemma \ref{lemma:u:bound}, we have, as $\omega\to\infty$,
\begin{align}\label{small}
    \frac{c}{\sqrt{\omega}}\cdot P(\Tilde{u}_{\omega},\Tilde{v}_\omega)\to0
\end{align}
and
\begin{align*}
    \frac{c}{\sqrt{\omega}}\cdot P(\Tilde{u}_\omega,\Tilde{v}_\omega)\leq\frac{1}{2}\left(\int|\nabla\Tilde{u}_\omega|^2+|\nabla\Tilde{v}_\omega|^2+|\Tilde{u}_\omega|^2+3\gamma|\Tilde{v}_\omega|^2\right).
\end{align*}
Hence, by the Gagliardo-Nirenberg inequality \eqref{GN:1}, we get
\begin{align*}
    &\frac{1}{2}\left(\int|\nabla\Tilde{u}_\omega|^2+|\nabla\Tilde{v}_\omega|^2+|\Tilde{u}_\omega|^2+3\gamma|\Tilde{v}_\omega|^2\right)\\
    <&2Q_{1,\frac{c}{\sqrt{\omega}},\frac{\alpha-3\gamma}{\omega}}(\Tilde{u}_\omega,\Tilde{v}_\omega)=4D(\Tilde{u}_\omega,\Tilde{v}_\omega)\\
    \leq& 4C_{opt}^{(1)}K(\Tilde{u}_\omega,\Tilde{v}_\omega)^{\frac{N}{2}}M_{3\gamma}(\Tilde{u}_\omega,\Tilde{v}_\omega)^{2-\frac{N}{2}}\\
\leq&4C_{opt}^{(1)}\left(\int|\nabla\Tilde{u}_\omega|^2+|\nabla\Tilde{v}_\omega|^2+|\Tilde{u}_\omega|^2+3\gamma|\Tilde{v}_\omega|^2\right)^2.
\end{align*}
This means that
\begin{align*}
    \left(\int|\nabla\Tilde{u}_\omega|^2+|\nabla\Tilde{v}_\omega|^2+|\Tilde{u}_\omega|^2+3\gamma|\Tilde{v}_\omega|^2\right)\gtrsim1,
\end{align*}
for $\omega$ is sufficiently large. Combining \eqref{inden:4} and \eqref{small}, then 
\begin{align}\label{lower:bound}
\left(\int|\nabla\Tilde{u}_\omega|^2+|\nabla\Tilde{v}_\omega|^2\right)\gtrsim1,\ \  \left(\int|\Tilde{u}_\omega|^2+3\gamma|\Tilde{v}_\omega|^2\right)\gtrsim1.
\end{align}
Also, we have
\begin{align}\label{small:2}
    \frac{N(\alpha-3\gamma)}{\omega}\|\Tilde{v}_{\omega}\|_{L^2}^2\to0,~~\text{as}~~\omega\to\infty.
\end{align}
Let
\begin{align}\label{def:ut}
    \left(\Tilde{u}^\lambda_\omega(x),\Tilde{v}^\lambda_\omega(x)\right)=\left(\Tilde{u}_\omega(x/\lambda),\Tilde{v}_\omega(x/\lambda)\right),~~\lambda>0.
\end{align}
Then
\begin{align*}
    N_{1,0,0}\left(\Tilde{u}^\lambda_\omega,\Tilde{v}^\lambda_\omega\right)=&\lambda^{N-2}\int\left(|\nabla \Tilde{u}_\omega|^2+|\nabla \Tilde{v}_\omega|^2\right)+\lambda^N\left(\|\Tilde{u}_\omega\|_{L^2}^2+3\gamma\|\Tilde{v}_\omega\|_{L^2}^2\right)-4\lambda^ND(\Tilde{u}_\omega,\Tilde{v}_\omega),
\end{align*}
where $N_{1,0,0}$ is defined in \eqref{def:N}.

From \eqref{inden:1}, \eqref{inden:3} and \eqref{inden:4}, we have
\begin{align}\label{inden:5}
    &N_{1,0,0}\left(\Tilde{u}^\lambda_\omega,\Tilde{v}^\lambda_\omega\right)\notag\\
    =&\lambda^{N-2}\int\left(|\nabla \Tilde{u}_\omega|^2+|\nabla \Tilde{v}_\omega|^2\right)-\frac{N\lambda^N}{4-N}\left(\|\Tilde{u}_\omega\|_{L^2}^2+3\gamma\| \Tilde{v}_\omega\|_{L^2}^2+\frac{2c}{\sqrt{\omega}}\cdot P(\Tilde{u}_\omega,\Tilde{v}_\omega)\right)\notag\\
    =&\frac{\lambda^{N-2}}{4-N}\left(N\left(\|\Tilde{u}_\omega\|_{L^2}^2+3\gamma\| \Tilde{v}_\omega\|_{L^2}^2\right)+(N-2)\frac{c}{\sqrt{\omega}}\cdot P(\Tilde{u}_\omega,\Tilde{v}_\omega)\right)\notag\\
    &-\frac{\lambda^N}{4-N}\left(N\left(\|\Tilde{u}_\omega\|_{L^2}^2+3\gamma\| \Tilde{v}_\omega\|_{L^2}^2\right)+\frac{2c}{\sqrt{\omega}}\cdot P(\Tilde{u}_\omega,\Tilde{v}_\omega)\right).
\end{align}
Notice that 
\begin{align*}
    N_{1,0,0}(\Tilde{u}_\omega,\Tilde{v}_\omega)=&N_{1,\frac{c}{\sqrt{\omega}},\frac{\alpha-3\gamma}{\omega}}(\Tilde{u}_\omega,\Tilde{v}_\omega)-\frac{\alpha-3\gamma}{\omega}\|\Tilde{v}_\omega\|_{L^2}^2-\frac{c}{\sqrt{\omega}}\cdot P(\Tilde{u}_\omega,\Tilde{v}_\omega)\\
    =&-\frac{\alpha-3\gamma}{\omega}\|\Tilde{v}_\omega\|_{L^2}^2-\frac{c}{\sqrt{\omega}}\cdot P(\Tilde{u}_\omega,\Tilde{v}_\omega).
\end{align*}

{\bf Case 1.} If $\frac{c}{\sqrt{\omega}}\cdot P(\Tilde{u}_\omega,\Tilde{v}_\omega)>0$ and $\alpha-3\gamma>0$, then $N_{1,0,\frac{\alpha-3\gamma}{\omega}}(\Tilde{u}_\omega,\Tilde{v}_\omega)<0$.  Obviously, using \eqref{lower:bound}, if $|c|$ is bounded and $\omega\to+\infty$,
\begin{align*}
    N\left(\|\Tilde{u}_\omega\|_{L^2}^2+3\gamma\| \Tilde{v}_\omega\|_{L^2}^2\right)+(N-2)\frac{c}{\sqrt{\omega}}\cdot P(\Tilde{u}_\omega,\Tilde{v}_\omega)>0,\\
    N\left(\|\Tilde{u}_\omega\|_{L^2}^2+3\gamma\| \Tilde{v}_\omega\|_{L^2}^2\right)+\frac{2c}{\sqrt{\omega}}\cdot P(\Tilde{u}_\omega,\Tilde{v}_\omega)>0,
\end{align*}
and 
\begin{align*}
    (N-2)\frac{c}{\sqrt{\omega}}\cdot P(\Tilde{u}_\omega,\Tilde{v}_\omega)<\frac{2c}{\sqrt{\omega}}\cdot P(\Tilde{u}_\omega,\Tilde{v}_\omega),~~1\leq N\leq3.
\end{align*}
Hence, from \eqref{inden:5}, there exists $\lambda_\omega\in(0,1)$ such that 
\begin{align*}
    N_{1,0,0}\left(\Tilde{u}^{\lambda_\omega}_\omega,\Tilde{v}^{\lambda_\omega}_\omega\right)=0.
\end{align*}

{\bf Case 2.} If $\frac{c}{\sqrt{\omega}}\cdot P(\Tilde{u}_\omega,\Tilde{v}_\omega)<0$ and $\alpha-3\gamma<0$, then $N_{1,0,0}(\Tilde{u}_\omega,\Tilde{v}_\omega)>0$. Similarly,  
\begin{align*}
    N\left(\|\Tilde{u}_\omega\|_{L^2}^2+3\gamma\| \Tilde{v}_\omega\|_{L^2}^2\right)+(N-2)\frac{c}{\sqrt{\omega}}\cdot P(\Tilde{u}_\omega,\Tilde{v}_\omega)>0,\\
    N\left(\|\Tilde{u}_\omega\|_{L^2}^2+3\gamma\| \Tilde{v}_\omega\|_{L^2}^2\right)+\frac{2c}{\sqrt{\omega}}\cdot P(\Tilde{u}_\omega,\Tilde{v}_\omega)>0.
\end{align*}
Since $1\leq N\leq 3$, then 
\begin{align*}
    (N-2)\frac{c}{\sqrt{\omega}}\cdot P(\Tilde{u}_\omega,\Tilde{v}_\omega)>\frac{2c}{\sqrt{\omega}}\cdot P(\Tilde{u}_\omega,\Tilde{v}_\omega).
\end{align*}
Hence, from \eqref{inden:5} there exists $\lambda_\omega\in(1,+\infty)$ such that 
\begin{align*} N_{1,0,0}\left(\Tilde{u}^{\lambda_\omega}_\omega,\Tilde{v}^{\lambda_\omega}_\omega\right)=0.
\end{align*}
We claim that 
\begin{align*}
    \lambda_\omega\in(1,+\infty) ~~\text{uniformly bounded}.
\end{align*}
In fact,  by assumption, $1\leq N\leq3$, $|c|$ is bounded and $\omega\to+\infty$, and the estimates \eqref{small} and \eqref{lower:bound}, we deduce that
\begin{align*}
    &\frac{1}{2}\left(N\left(\|\Tilde{u}_\omega\|_{L^2}^2+3\gamma\| \Tilde{v}_\omega\|_{L^2}^2\right)+(N-2)\frac{c}{\sqrt{\omega}}\cdot P(\Tilde{u}_\omega,\Tilde{v}_\omega)\right)\\<&N\left(\|\Tilde{u}_\omega\|_{L^2}^2+3\gamma\| \Tilde{v}_\omega\|_{L^2}^2\right)+\frac{2c}{\sqrt{\omega}}\cdot P(\Tilde{u}_\omega,\Tilde{v}_\omega).
\end{align*}
Then, from \eqref{inden:5}, we can obtain that $\lambda^2_{\omega}<2$. Hence, the claim is true.

{\bf Case 3.}  Assume that $\frac{c}{\sqrt{\omega}}\cdot P(\Tilde{u}_\omega,\Tilde{v}_\omega)>0$ and $\alpha-3\gamma<0$. If $N_{1,0,0}(\Tilde{u}_\omega,\Tilde{v}_\omega)<0$, then by the similar argument as Case 1, there exists $\lambda_\omega\in (0,1)$  such that $N_{1,0,0}(\Tilde{u}_{\omega}^{\lambda_\omega},\Tilde{v}_{\omega}^{\lambda_\omega})=0$; if  $N_{1,0,0}(\Tilde{u}_\omega,\Tilde{v}_\omega)>0$, then by the similar argument as Case 2, there exists $\lambda_\omega\in (1,\infty)$ is uniformly bounded,  such that $N_{1,0,0}(\Tilde{u}_{\omega}^{\lambda_\omega},\Tilde{v}_{\omega}^{\lambda_\omega})=0$.

{\bf Case 4.}  Assume that $\frac{c}{\sqrt{\omega}}\cdot P(\Tilde{u}_\omega,\Tilde{v}_\omega)>0$ and $\alpha-3\gamma>0$. By the same argument as Case 3, we can obtain the similar result. Here we omit it.

Furthermore, we claim that 
\begin{align}\label{claim:3}
    \lambda_{\omega}\to1~~\text{as}~~\omega\to+\infty.
\end{align}
Indeed,  from \eqref{inden:5}, we have 
\begin{align*}
0=&N_{1,0,0}\left(\Tilde{u}^{\lambda_{\omega}}_\omega,\Tilde{v}^{\lambda_\omega}_\omega\right)\notag\\
    =&\frac{\lambda_{\omega}^{N-2}}{4-N}\Bigg(N(1-\lambda_\omega^2)\left(\|\Tilde{u}_\omega\|_{L^2}^2+3\gamma\| \Tilde{v}_\omega\|_{L^2}^2\right)+[N-2-2\lambda_\omega^2]\frac{c}{\sqrt{\omega}}\cdot P(\Tilde{u}_\omega,\Tilde{v}_\omega)\Bigg).
\end{align*}
By \eqref{small} and \eqref{lower:bound}, we can obtain the claim \eqref{claim:3}.

Now, from the identity \eqref{inden:2}, we deduce that 
\begin{align*}
    &\lambda_{\omega}^{N-2}S_{1,0,0}\left(\Tilde{u}_{\omega}^{\lambda_\omega},\Tilde{v}_{\omega}^{\lambda_\omega}\right)\\
    =&\frac{1}{2}\int\left(|\nabla \Tilde{u}_{\omega}|^2+|\nabla \Tilde{v}_{\omega}|^2\right)+\frac{\lambda_\omega^2}{2}\left(\|\Tilde{u}_{\omega}\|_{L^2}^2+3\gamma\|\Tilde{v}_{\omega}\|_{L^2}^2\right)-\lambda_\omega^2D(\Tilde{u}_{\omega},\Tilde{v}_{\omega})\notag\\
    =&\frac{1}{2}\int\left(|\nabla \Tilde{u}_{\omega}|^2+|\nabla \Tilde{v}_{\omega}|^2\right)+\lambda_{\omega}^2\frac{1}{4}\left(\|\Tilde{u}_{\omega}\|_{L^2}^2+3\gamma\|\Tilde{v}_{\omega}\|_{L^2}^2\right)-\frac{\lambda_\omega^2(\alpha-3\gamma)}{4\omega}\|\Tilde{v}_\omega\|_{L^2}^2\\
    &-\frac{\lambda_{\omega}^2}{4}\int\left(|\nabla \Tilde{u}_{\omega}|^2+|\nabla \Tilde{v}_{\omega}|^2\right)-\lambda_\omega^2\frac{1}{4}\frac{c}{\sqrt{\omega}}\cdot P(\Tilde{u}_{\omega},\Tilde{v}_{\omega})\notag\\
    =&\left(\frac{1}{2}-\frac{\lambda_\omega^2}{4}\right)\int\left(|\nabla \Tilde{u}_{\omega}|^2+|\nabla \Tilde{v}_{\omega}|^2\right)+\lambda_{\omega}^2\frac{1}{4}\left(\|\Tilde{u}_{\omega}\|_{L^2}^2+3\gamma\|\Tilde{v}_{\omega}\|_{L^2}^2\right)\\
    &-\frac{\lambda_\omega^2(\alpha-3\gamma)}{4\omega}\|\Tilde{v}_\omega\|_{L^2}^2-\lambda_\omega^2\frac{1}{4}\frac{c}{\sqrt{\omega}}\cdot P(\Tilde{u}_{\omega},\Tilde{v}_{\omega})\notag\\
    =&\frac{1}{2}\mathcal{Q}_{1,\frac{c}{\sqrt{w}},\frac{\alpha-3\gamma}{\omega}}(\Tilde{u}_{\omega},\Tilde{v}_{\omega})+\frac{1}{4}(1-\lambda_\omega^2)\int\left(|\nabla \Tilde{u}_{\omega}|^2+|\nabla \Tilde{v}_{\omega}|^2\right)+\frac{(1-\lambda_\omega^2)(\alpha-3\gamma)}{4\omega}\|\Tilde{v}_\omega\|_{L^2}^2\\
    &+\frac{1}{4}(\lambda_\omega^2-1)\left(\|\Tilde{u}_{\omega}\|_{L^2}^2+3\gamma\|\Tilde{v}_{\omega}\|_{L^2}^2\right)-\frac{1}{4}(1+\lambda_\omega^2)\frac{c}{\sqrt{\omega}}\cdot P(\Tilde{u}_{\omega},\Tilde{v}_{\omega})\notag\\
    =&\mu_{1,\frac{c}{\sqrt{\omega}},\frac{\alpha-3\gamma}{\omega}}+o(1),~~\text{as}~~\omega\to+\infty.
\end{align*}
where $\mathcal{Q}_{1,\frac{c}{\sqrt{\omega}},\frac{\alpha-3\gamma}{\omega}}$ is defined by \eqref{def:Q} and in the last step,  we also used the Lemma  \ref{lemma:u:bound}, \eqref{claim:3} and \eqref{small:2}. This means that 
\begin{align}\label{relation:2}
    \mu_{1,0,0}= \mu_{1,\frac{c}{\sqrt{\omega}},\frac{\alpha-3\gamma}{\omega}}+o(1),~~\text{as}~~\omega\to+\infty.
\end{align}
Combining \eqref{relation:2a} and \eqref{relation:2}, we can obtain the desired result. Now we have completed the proof of this Lemma.
\end{proof}

\begin{lemma}\label{lemma:con}
Up to a subsequence, $(\Tilde{u}_{\omega}^{\lambda_\omega},\Tilde{u}_{\omega}^{\lambda_\omega})\to(\Tilde{u}_\infty,\Tilde{v}_\infty)$ strongly in $H^1\times H^1$  as $\omega\rightarrow +\infty$. Moreover, $(\Tilde{u}_\infty,\Tilde{v}_\infty)$ is a minimizer sequence of $\mu_{1,0,0}$.
\end{lemma}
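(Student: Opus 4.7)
The plan is to recognize $(\Tilde{u}_\omega^{\lambda_\omega},\Tilde{v}_\omega^{\lambda_\omega})$ as a minimizing sequence for $\Tilde{\mu}_{1,0}$ on the Nehari manifold $\Tilde{\mathcal{N}}_{1,0}$, and then directly invoke Lemma \ref{lemma:strong} with parameters $(\omega,c)=(1,0)$ to extract a subsequence converging strongly in $H^1\times H^1$ to an element of $\Tilde{a}_{1,0}$. All the analytic hard work (Brezis--Lieb type decomposition, profile decomposition, and the energy inequality $\Tilde{Q}_{\omega,c}(u,v)\leq \Tilde{Q}_{\omega,c}$ of the weak limit + remainder) has been packaged into Lemma \ref{lemma:strong}, so the task reduces to verifying its two hypotheses.

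The first hypothesis, $\Tilde{N}_{1,0}(\Tilde{u}_\omega^{\lambda_\omega},\Tilde{v}_\omega^{\lambda_\omega})\to 0$, is automatic: by the very definition of $\lambda_\omega$ in the proof of Lemma \ref{lemma:relation}, this quantity is identically zero. For the second hypothesis, $\Tilde{S}_{1,0}(\Tilde{u}_\omega^{\lambda_\omega},\Tilde{v}_\omega^{\lambda_\omega})\to \Tilde{\mu}_{1,0}$, I will use the final display of the proof of Lemma \ref{lemma:relation}, which reads
\[
\lambda_\omega^{N-2}S_{1,0}(\Tilde{u}_\omega^{\lambda_\omega},\Tilde{v}_\omega^{\lambda_\omega})\leq \mu_{1,\frac{c}{\sqrt{\omega}}}+o(1),\qquad \omega\to+\infty.
\]
Combining with the convergences $\lambda_\omega\to 1$ (shown in \eqref{claim:3}), the uniform boundedness in Lemma \ref{lemma:u:bound}, and $\mu_{1,c/\sqrt{\omega}}\to \mu_{1,0}$ from Lemma \ref{lemma:relation}, this gives the upper bound $\limsup_\omega S_{1,0}(\Tilde{u}_\omega^{\lambda_\omega},\Tilde{v}_\omega^{\lambda_\omega})\leq \mu_{1,0}$. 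The matching lower bound is immediate from $N_{1,0}(\Tilde{u}_\omega^{\lambda_\omega},\Tilde{v}_\omega^{\lambda_\omega})=0$ together with the defining variational characterization of $\mu_{1,0}$.

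With both hypotheses verified, Lemma \ref{lemma:strong} applied at $(\omega,c)=(1,0)$ yields translations $\{y_\omega\}\subset\mathbb{R}^N$ and a pair $(\Tilde{u}_\infty,\Tilde{v}_\infty)\in \Tilde{X}_{1,0}\setminus\{(0,0)\}$ such that, along a subsequence, $\Tilde{\tau}_{y_\omega}(\Tilde{u}_\omega^{\lambda_\omega},\Tilde{v}_\omega^{\lambda_\omega})\to (\Tilde{u}_\infty,\Tilde{v}_\infty)$ strongly in $H^1\times H^1$, with $(\Tilde{u}_\infty,\Tilde{v}_\infty)\in \Tilde{a}_{1,0}$. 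When $c=0$ the map $\Tilde{\tau}_{y_\omega}$ is a pure spatial translation, which leaves every norm, Nehari, and energy functional invariant; hence after absorbing $y_\omega$ into the limit one obtains the strong $H^1$ convergence claimed in the lemma, and the limit is a minimizer of $\mu_{1,0}$. The main obstacle is precisely the translation ambiguity: the sequence $\{y_\omega\}$ could a priori diverge, but because the $c=0$ problem is translation invariant this is harmless, and moreover is automatically consistent with the statement of Theorem \ref{Thm:limit} where the limit is only identified up to translation (and phase).
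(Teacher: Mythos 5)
Your proposal is correct and follows essentially the same route as the paper: both verify that $N_{1,0}(\Tilde{u}_{\omega}^{\lambda_\omega},\Tilde{v}_{\omega}^{\lambda_\omega})=0$ and $S_{1,0}(\Tilde{u}_{\omega}^{\lambda_\omega},\Tilde{v}_{\omega}^{\lambda_\omega})\to\mu_{1,0}$ via Lemma \ref{lemma:relation} and then run the concentration-compactness extraction, the only difference being that you invoke Lemma \ref{lemma:strong} directly at $(\omega,c)=(1,0)$ while the paper repeats its argument ``by similar argument as Lemma \ref{lemma:positive} and \ref{lemma:w}.'' Your explicit remark that the translations $y_\omega$ are only removable up to the translation invariance of the $c=0$ problem is, if anything, more careful than the paper, which silently drops $\tau_{y_\omega}$ in its final line.
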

\begin{proof}
From Lemma \ref{lemma:relation}, it is known that
\begin{align*}
    N_{1,0,0}(\Tilde{u}_{\omega}^{\lambda_\omega},\Tilde{u}_{\omega}^{\lambda_\omega})=0
\end{align*} 
and  
\begin{align*}
    S_{1,0,0}(\Tilde{u}_{\omega}^{\lambda_\omega},\Tilde{u}_{\omega}^{\lambda_\omega})\to \mu_{1,0,0}~~\text{as}~~\omega\to\infty.
\end{align*}
This means that $\left(\Tilde{u}_\omega^{\lambda_\omega},\Tilde{v}_\omega^{\lambda_\omega}\right)$ is the minimizing sequence of \eqref{min:a} with $c=0$ and $\alpha=3\gamma$.
By the definition of $(\Tilde{u}_{\omega}^{\lambda_\omega},\Tilde{u}_{\omega}^{\lambda_\omega})$ (see \eqref{def:ut}) and Lemma \ref{lemma:u:bound}, we get $(\Tilde{u}_{\omega}^{\lambda_\omega},\Tilde{u}_{\omega}^{\lambda_\omega})$ is also uniformly bounded in $H^1(\mathbb{R}^N)\times H^1(\mathbb{R}^N)$.

By similar argument as Lemma \ref{lemma:positive} and \ref{lemma:w}, there exist $\{y_\omega\}$, $(\Tilde{u}_\infty,\Tilde{v}_\infty)\in H^1(\mathbb{R}^N)\times H^1(\mathbb{R}^N)\backslash\{(0,0)\}$, and a subsequence $\tau_{y_\omega}\left(\Tilde{u}_\omega^{\lambda_\omega},\Tilde{v}_\omega^{\lambda_\omega}\right)$ (still denoted by $\tau_{y_\omega}\left(\Tilde{u}_\omega^{\lambda_\omega},\Tilde{v}_{\omega}^{\lambda_\omega}\right)$ such that 
\begin{align*}
    \tau_{y_\omega}(\Tilde{u}_\omega^{\lambda_\omega},\Tilde{v}_\omega^{\lambda_\omega})\rightharpoonup (u_\infty,v_\infty)~~\text{weakly in}~~H^1(\mathbb{R}^N)\times H^1(\mathbb{R}^N).
\end{align*}
By weakly convergence of $\tau_{y_\omega}(\Tilde{u}_\omega^{\lambda_\omega},\Tilde{v}_{\omega}^{\lambda_\omega})$ and Lemma \ref{lemma:nonlinear}, we have
\begin{align*}
\mathcal{Q}_{1,0,0}\tau_{y_\omega}(\Tilde{u}_\omega^{\lambda_\omega},\Tilde{v}_{\omega}^{\lambda_\omega})-\mathcal{Q}_{1,0,0}\left(\tau_{y_\omega}(\Tilde{u}_\omega^{\lambda_\omega},\Tilde{v}_\omega^{\lambda_\omega})-(\Tilde{u}_\infty,\Tilde{v}_\infty)\right)\to \mathcal{Q}_{1,0,0}(\Tilde{u}_\infty,\Tilde{v}_\infty).
\end{align*}
Since $\mathcal{Q}_{1,0,0}(\Tilde{u}_\infty,\Tilde{v}_{\infty})>0$, up to a subsequence, we have
\begin{align*}
    \frac{1}{2}\lim_{\omega\to\infty}\mathcal{Q}_{1,0,0}\left(\tau_{y_\omega}(\Tilde{u}_\omega^{\lambda_\omega},\Tilde{v}_\omega^{\lambda_\omega})-(\Tilde{u}_\infty,\Tilde{v}_\infty)\right)<\frac{1}{2}\lim_{\omega\to\infty}\mathcal{Q}_{1,0,0}(\tau_{\omega}(\Tilde{u}_{\omega}^{\lambda_\omega},\Tilde{v}_\omega^{\lambda_\omega}))=\mu_{1,0,0}.
\end{align*}
By a similar argument as Lemma \ref{lemma:upper}, we can obtain that 
\begin{align*}
    \mu_{1,0,0}\leq\frac{1}{2}\mathcal{Q}_{1,0,0}.
\end{align*}
Then, using the above and the weakly lower semi-continuity of norms, we obtain
\begin{align*}
    \mu_{1,0,0}\leq\frac{1}{2}\mathcal{Q}_{1,0,0}(\Tilde{u}_\infty,\Tilde{v}_\infty)\leq\frac{1}{2}\lim_{\omega\to\infty}\mathcal{Q}_{1,0,0}(\tau_{\omega}(\Tilde{u}_{\omega}^{\lambda_\omega},\Tilde{v}_\omega^{\lambda_\omega}))=\mu_{1,0,0}.
\end{align*}
Hence, we have
\begin{align*}
    \mathcal{Q}_{1,0,0}\left(\tau_{y_\omega}(\Tilde{u}_\omega^{\lambda_\omega},\Tilde{v}_\omega^{\lambda_\omega})-(\Tilde{u}_\infty,\Tilde{v}_\infty)\right)\to0,
\end{align*}
which implies that 
\begin{align*}
    (\Tilde{u}_\omega^{t_\omega},\Tilde{u}_\omega^{t_\omega})\to(\Tilde{u}_\infty,\Tilde{v}_\infty)~~\text{strongly in}~~H^1(\mathbb{R}^N)\times H^1(\mathbb{R}^N).
\end{align*}
Now we complete the proof of Lemma \ref{lemma:con}.
\end{proof}

\begin{proof}[\bf Proof of Theorem \ref{Thm:limit}.]
By the definitions of \eqref{def:tilde:u} and \eqref{def:ut} and Lemma \ref{lemma:con}, one can easily obtain 
\begin{align*}
\frac{1}{\omega^{\frac{1}{2}}}\left(u_{\omega}\left(\frac{x}{\lambda_\omega\omega^{\frac{1}{2}}}\right),v_{\omega}\left(\frac{x}{\lambda_\omega\omega^{\frac{1}{2}}}\right)\right)\to (u_\infty,v_\infty)~~\text{in}~~H^1(\mathbb{R}^N),
\end{align*}
where $\lambda_\omega\to1$ (see \eqref{claim:3}).

On the other hand, by the weak convergence and the norm convergence, we have
    \begin{align*}
\frac{1}{\omega^{\frac{1}{2}}}\left(u_{\omega}\left(\frac{x}{\lambda_\omega\omega^{\frac{1}{2}}}\right),v_{\omega}\left(\frac{x}{\lambda_\omega\omega^{\frac{1}{2}}}\right)\right)\to\frac{1}{\omega^{\frac{1}{2}}}\left(u_{\omega}\left(\frac{x}{\omega^{\frac{1}{2}}}\right),v_{\omega}\left(\frac{x}{\omega^{\frac{1}{2}}}\right)\right)~~\text{in}~~H^1(\mathbb{R}^N),
\end{align*}
as $\lambda_\omega\to1$. Hence, combining the above two relations, we conclude the proof of Theorem \ref{Thm:limit}.
\end{proof}

\section{Global existence}\label{section:4}
In this section, we aim to obtain the global existence.  First, we introduce a subset of the energy space 
\begin{align}\label{def:A}
    A^{+}_{\omega,c,\alpha-3\gamma}=\{(u,v)\in H^1(\mathbb{R}^N)\times H^1(\mathbb{R}^N)~:~S_{\omega,c,\alpha-3\gamma}(u,v)\leq \mu_{\omega,c,\alpha-3\gamma},~N_{\omega,c,\alpha-3\gamma}(u,v)\geq0\}.
\end{align}
Obviously, $A^+_{\omega,c,\alpha-3\gamma}$ is nonempty.

Now we show that $A^{+}_{\omega,c,\alpha-3\gamma}$ is an invariant set under the flow.
\begin{lemma}\label{lemma:invariant}
Assume \eqref{condition:dim}. Then the set $A^{+}_{\omega,c,\alpha-3\gamma}$ is invariant under the flow of \eqref{equ:system}.
\end{lemma}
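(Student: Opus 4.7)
The plan is to combine conservation of $S_{\omega,c}$ along the flow with a continuity argument on the sign of $N_{\omega,c}$. First I would observe that
$$S_{\omega,c}(u,v)=E(u,v)+\tfrac{\omega}{2}M_{3\gamma}(u,v)+\tfrac{1}{2}c\cdot P(u,v)$$
is a linear combination of the three conserved quantities \eqref{mass}, \eqref{energy}, \eqref{momentum}, so $S_{\omega,c}(u(t),v(t))=S_{\omega,c}(u_0,v_0)\leq\mu_{\omega,c}$ on the whole lifespan of the $H^1$ solution. Hence the first condition defining $A^+_{\omega,c}$ is automatically preserved and everything reduces to propagating $N_{\omega,c}\geq0$.

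For the Nehari side I would argue by contradiction. The trivial case $(u_0,v_0)=(0,0)$ is immediate, so assume $(u_0,v_0)\neq 0$; mass conservation then forces $(u(t),v(t))\neq 0$ on the whole lifespan. If $N_{\omega,c}(u(t^*),v(t^*))<0$ at some $t^*$, then by continuity of $t\mapsto N_{\omega,c}(u(t),v(t))$ and the initial condition $N_{\omega,c}(u_0,v_0)\geq0$ there exists $t_0$ strictly between $0$ and $t^*$ with $N_{\omega,c}(u(t_0),v(t_0))=0$, so $(u(t_0),v(t_0))\in\mathcal{N}_{\omega,c}$. The definition of $\mu_{\omega,c}$ together with conservation then gives
$$\mu_{\omega,c}\leq S_{\omega,c}(u(t_0),v(t_0))=S_{\omega,c}(u_0,v_0)\leq\mu_{\omega,c},$$
so $(u(t_0),v(t_0))$ is a minimizer of \eqref{min:a}, hence a boosted ground state $(\phi,\psi)\in\mathcal{G}_{\omega,c}=a_{\omega,c}$ solving \eqref{equ:s} by Proposition \ref{pro}.

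From here the plan is to exploit the rigidity of the flow at a boosted ground state. Since $(\phi,\psi)$ satisfies \eqref{equ:s}, the function
$$(\tilde u,\tilde v)(t,x)=\bigl(e^{i\omega(t-t_0)}\phi(x-c(t-t_0)),\,e^{3i\omega(t-t_0)}\psi(x-c(t-t_0))\bigr)$$
is a traveling-wave solution of \eqref{equ:system}. By uniqueness of the $H^1$-Cauchy problem, $(u,v)\equiv(\tilde u,\tilde v)$ for $t\geq t_0$. A direct computation — translation invariance of the integrals together with the phase invariance of $|u|^2$, $|v|^2$, $|\nabla u|^2$, $|\nabla v|^2$, of $\Re(i\bar u\nabla u)$ and $\Re(i\bar v\nabla v)$, and the exact cancellation of the phases in $\bar u^3 v=\bar\phi^3\psi$ — gives $N_{\omega,c}(\tilde u(t),\tilde v(t))=N_{\omega,c}(\phi,\psi)=0$ for every $t\geq t_0$, contradicting $N_{\omega,c}(u(t^*),v(t^*))<0$. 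The argument for $t<0$ is identical.

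The main obstacle is the boundary case $S_{\omega,c}(u_0,v_0)=\mu_{\omega,c}$: with strict inequality the transit condition $(u(t_0),v(t_0))\in\mathcal{N}_{\omega,c}$ would already be incompatible with $S\leq\mu$, and preservation of $\mathrm{sgn}(N_{\omega,c})$ would be automatic; when equality is allowed one has to identify the transit point as a boosted ground state and invoke Cauchy uniqueness to propagate $N_{\omega,c}\equiv 0$, which is the only nontrivial point of the proof.
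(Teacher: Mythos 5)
Your argument is correct. The paper itself omits the proof of this lemma, deferring to the analogous statements in \cite{FHI2022MA,L2023}, and what you write is precisely the standard continuity-plus-Nehari argument those references use: conservation of $S_{\omega,c}$ (a linear combination of the conserved mass, energy and momentum) propagates the first constraint, and an intermediate-value argument on $t\mapsto N_{\omega,c}(u(t),v(t))$ propagates the second. The one genuinely delicate point is the one you isolate: because $A^{+}_{\omega,c}$ is defined with the non-strict inequality $S_{\omega,c}\leq\mu_{\omega,c}$, the transit time $t_0$ with $N_{\omega,c}=0$ is not immediately contradictory, and you correctly resolve it by identifying $(u(t_0),v(t_0))$ as a minimizer, hence (via Lemma \ref{lemma:a} and Proposition \ref{pro}) a boosted ground state, and then invoking uniqueness of the $H^1$ Cauchy problem together with the exact phase cancellation in $\bar{u}^3v$ to conclude $N_{\omega,c}\equiv 0$ along the resulting traveling wave, contradicting $N_{\omega,c}(u(t^*),v(t^*))<0$. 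This is a complete proof of the statement as written, and in fact supplies more detail than the paper does.
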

\begin{proof}
Let $(u_0,v_0)\in A^+_{\omega,c,\alpha-3\gamma}$. It is obvious that $S_{\omega,c,\alpha-3\gamma}(u(t),v(t))\leq \mu_{\omega,c,\alpha-3\gamma}$ for all $t\in I$, where $I$ is the maximal existence interval of $H^1$-solution, since the corresponding  mass, energy and momentum are conserved.

Now we show that $N_{\omega,c,\alpha-3\gamma}(u(t),v(t))\geq0$ for all $t\in I$. If not, there exist $t_1,t_2\in I$ such that $N_{\omega,c,\alpha-3\gamma}(u(t_1),v(t_1))<0$ and $N_{\omega,c,\alpha-3\gamma}(u(t_2),v(t_2))=0$. By the uniqueness of Cauchy problem for \eqref{equ:system}, we have $(u(t_2),v(t_2))\neq(0,0)$. Moreover, from $S_{\omega,c,\alpha-3\gamma}(u(t),v(t))\leq \mu_{\omega,c,\alpha-3\gamma}$, we obtain $(u(t_2),v(t_2))\in a_{\omega,c,\alpha-3\gamma}\subset\mathcal{G}_{\omega,c,\alpha-3\gamma}$. This yields that 
\begin{align*}
    (u(t),v(t))
    =\left(e^{i\omega (t-t_2)}u(t_2,x-c(t-t_2)),e^{3i\omega (t-t_2)}v(t_2,x-c(t-t_2))\right)
\end{align*}
for all $t\in\mathbb{R}$. In particular, $N_{\omega,c,\alpha-3\gamma}(u(t),v(t))=0$ for all $t\in \mathbb{R}$, which contradicts $N_{\omega,c,\alpha-3\gamma}(u(t_1),v(t_1))<0$. 

By similar argument as above, we can prove the case $A^-_{\omega,c,\alpha-3\gamma}$. Now we complete the proof of this lemma.
\end{proof}

\begin{lemma}\label{lemma:global}
Assume the condition \eqref{condition:dim} holds. If the initial data $(u_0,v_0)\in A^+_{\omega,c,\alpha-3\gamma}$, then the $H^1$ solution $(u(t),v(t))$ of \eqref{equ:system} exists globally in time and 
\begin{align*}
    \sup_{t\in\mathbb{R}}\|(u(t),v(t))\|_{H^1\times H^1}\leq C(\|(u_0,v_0)\|_{H^1\times H^1})<\infty.
\end{align*}
\end{lemma}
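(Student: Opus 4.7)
The plan is to combine the invariance of $A^+_{\omega,c}$ established in Lemma \ref{lemma:invariant} with the coercivity of the quadratic part of $S_{\omega,c}$ under the standing hypothesis \eqref{condition:dim}, to produce a uniform $H^1$ bound along the flow; then global existence follows from the blow-up alternative in the local well-posedness theory (see \cite{Book2003NLS}).

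First, I would note that $S_{\omega,c}$ is a linear combination of the three conserved quantities $E$, $M_{3\gamma}$, and $P$ (see \eqref{def:functional}), hence is itself conserved along the $H^1$-flow of \eqref{equ:system}. By Lemma \ref{lemma:invariant}, $(u(t),v(t))\in A^+_{\omega,c}$ throughout the maximal interval of existence, so for every $t$ in this interval
\begin{align*}
S_{\omega,c}(u(t),v(t)) = S_{\omega,c}(u_0,v_0)\leq\mu_{\omega,c},\qquad N_{\omega,c}(u(t),v(t))\geq 0.
\end{align*}

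Second, I would exploit the elementary identity $S_{\omega,c}=\tfrac12 Q_{\omega,c}+\tfrac14 N_{\omega,c}$ (already used in the proof of Lemma \ref{lemma:positive}); together with $N_{\omega,c}\geq 0$ it gives
\begin{align*}
\tfrac12 Q_{\omega,c}(u(t),v(t))\leq S_{\omega,c}(u(t),v(t))=S_{\omega,c}(u_0,v_0)\leq \mu_{\omega,c}.
\end{align*}
Next, I would use the rewriting of $S_{\omega,c}$ displayed after \eqref{def:functional}, which shows that
\begin{align*}
Q_{\omega,c}(u,v)=\tfrac12\bigl\|\nabla(e^{-\tfrac{i}{2}c\cdot x}u)\bigr\|_{L^2}^2+\tfrac12\bigl(\omega-\tfrac{|c|^2}{4}\bigr)\|u\|_{L^2}^2+\tfrac12\bigl\|\nabla(e^{-\tfrac{i\gamma}{2}c\cdot x}v)\bigr\|_{L^2}^2+\tfrac12\bigl(3\gamma\omega-\tfrac{\gamma^2|c|^2}{4}\bigr)\|v\|_{L^2}^2.
\end{align*}
The condition \eqref{condition:dim} guarantees that both $\omega-|c|^2/4$ and $3\gamma\omega-\gamma^2|c|^2/4$ are strictly positive, so $Q_{\omega,c}$ is a positive-definite quadratic form in the modulated unknowns. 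Then the pointwise identities $\nabla u = (ic/2)u+e^{ic\cdot x/2}\nabla(e^{-ic\cdot x/2}u)$ and its analogue for $v$ yield
\begin{align*}
\|\nabla u\|_{L^2}^2\leq 2\bigl\|\nabla(e^{-\tfrac{i}{2}c\cdot x}u)\bigr\|_{L^2}^2+\tfrac{|c|^2}{2}\|u\|_{L^2}^2,
\end{align*}
and similarly for $v$. Combining these controls one obtains a constant $C=C(\omega,c,\gamma)>0$ such that $\|(u,v)\|_{H^1\times H^1}^2\leq C\, Q_{\omega,c}(u,v)$.

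Putting the two bounds together gives $\sup_t\|(u(t),v(t))\|_{H^1\times H^1}^2\leq 2C\mu_{\omega,c}$, which is finite and independent of $t$. By the $H^1$ local well-posedness and the standard blow-up alternative for \eqref{equ:system}, the solution cannot blow up in finite time, hence exists globally. I do not expect a serious obstacle: the only technical point is the coercivity of $Q_{\omega,c}$, and this is exactly what the hypothesis \eqref{condition:dim} is designed to provide.
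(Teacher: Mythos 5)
Your proposal is correct and follows essentially the same route as the paper: invariance of $A^+_{\omega,c}$, the identity $S_{\omega,c}=\tfrac12 Q_{\omega,c}+\tfrac14 N_{\omega,c}$ to get $\tfrac12 Q_{\omega,c}\leq\mu_{\omega,c}$, and the modulated-gradient rewriting of $Q_{\omega,c}$ to extract an a priori $H^1$ bound. The only cosmetic difference is that you control the $L^2$ norms via the positive mass coefficients in $Q_{\omega,c}$ under \eqref{condition:dim}, while the paper instead invokes conservation of mass; both are valid.
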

\begin{proof}
Combining Lemma \ref{lemma:invariant} and the definitions of $S_{\omega,c,\alpha-3\gamma}$, $\mathcal{Q}_{\omega,c,\alpha-3\gamma}$ and $N_{\omega,c,\alpha-3\gamma}$, we have 
\begin{align*}
    \mu_{\omega,c,\alpha-3\gamma}
    \geq&S_{\omega,c,\alpha-3\gamma}(u(t),v(t))\\
    =&\frac{1}{2}\mathcal{Q}_{\omega,c,\alpha-3\gamma}(u(t),v(t))+\frac{1}{4}N_{\omega,c,\alpha-3\gamma}(u(t),v(t))\\
    \geq& \frac{1}{2}\mathcal{Q}_{\omega,c,\alpha-3\gamma}(u(t),v(t))\\
    \geq&\frac{1}{4}\left\|\nabla u(t)-\frac{i}{2}cu(t)\right\|_{L^2}^2+\frac{1}{4}\left\|\nabla v(t)-\frac{i\gamma}{2}cv(t)\right\|_{L^2}^2.
\end{align*}
By the conservation of  mass, we deduce 
\begin{align*}
    &\|\nabla u(t)\|_{L^2}^2+\|\nabla v(t)\|_{L^2}^2\\
    \leq&C\left(\left\|\nabla u(t)-\frac{i}{2}cu(t)\right\|_{L^2}^2+\left\|\nabla v(t)-\frac{i\gamma}{2}cv(t)\right\|_{L^2}^2\right)+C\left(\|u(t)\|_{L^2}^2+\|v(t)\|_{L^2}^2\right)\\
    \leq&C\mu_{\omega,c,\alpha-3\gamma}+CM_{3\gamma}(u_0,v_0).
\end{align*}
This means that the $H^1$ solution is globally in time.
Now we complete the proof of this lemma.
\end{proof}

{\begin{remark}\label{remark:invariant}
We note that if $\mu_{\omega,c,\alpha-3\gamma}$ is independent on the angle of $c$. Indeed, let $c_1,c_2\in\mathbb{R}^N$ satisfy $|c_1|=|c_2|$. Then there exists an orthogonal matrix $R$ such that $Rc_2=c_1$. Let $(u_n,v_n)$ be a minimizing sequence for $\mu_{\omega,c_1,\alpha-3\gamma}$, i.e., $(u_n,v_n)\in N_{\omega,c_1,\alpha-3\gamma}$ and $S_{\omega,c_1,\alpha-3\gamma}(u_n,v_n)\to \mu_{\omega,c_1,\alpha-3\gamma}$. Let $(w_n(x),z_n(x))=(u_n(Rx),v_n(Rx))$, then 
\begin{align*}
        N_{\omega,c_2,\alpha-3\gamma}(w_n,z_n)=N_{\omega,c_1,\alpha-3\gamma}(u_n,v_n)=0,\\
        S_{\omega,c_2,\alpha-3\gamma}(w_n,z_n)=S_{\omega,c_1,\alpha-3\gamma}(u_n,v_n)\to\mu_{\omega,c_1,\alpha-3\gamma}.
\end{align*}
This implies that $\mu_{\omega,c_2,\alpha-3\gamma}\leq \mu_{\omega,c_1,\alpha-3\gamma}$. Similarly, the inverse inequality holds. 
In particular, when $\alpha=3\gamma$, $\mu_{\omega,\frac{c}{|c|},0}$ is constant independent of $|c|$.
\end{remark}}

\begin{proof}[{\bf Proof of Theorem \ref{Thm2}}]
To prove (i).
Let $A_0$ be chosen later. We claim  that if $(u_0,v_0)\in H^1(\mathbb{R}^2)$ satisfies $\|v_0\|_{L^2}^2<A_0$ and $0<\epsilon\leq\frac{1}{|c|^2}$ , then we claim that
\begin{align}\label{global:1}
    &S_{\frac{|\sqrt{1+\epsilon}c|^2}{4},c,0}\left(e^{\frac{i}{2}c\cdot x}u_0,e^{\frac{i\gamma}{2}c\cdot x}v_0\right)<\mu_{\frac{|\sqrt{1+\epsilon}c|^2}{4},c,0},\\\label{global:2}
    &N_{\frac{|\sqrt{1+\epsilon}c|^2}{4},c,0}\left(e^{\frac{i}{2}c\cdot x}u_0,e^{\frac{i\gamma}{2}c\cdot x}v_0\right)\geq0.
\end{align}
for large $|c|$.  If this claim is true, then we can obtain 
\begin{align*}
    \left(e^{\frac{i}{2}c\cdot x}u_0,e^{\frac{i\gamma}{2}c\cdot x}v_0\right)\in A^+_{\frac{|\sqrt{1+\epsilon}c|^2}{4},c,0}
\end{align*}
for $|c|$ large enough. Hence, by Lemma \ref{lemma:global}, we can obtain the desired result.

Now we prove the claim. Indeed, by the definition of $\mathcal{Q}_{\omega,c,0}$, we have 
\begin{align*}
    &\mathcal{Q}_{\frac{|\sqrt{1+\epsilon}c|^2}{4},c,0}(u,v)\\
    =&\frac{1}{2}\int|\nabla u|^2+\frac{1}{2}\int|\nabla v|^2+\frac{(1+\epsilon)|c|^2}{8}\|u\|_{L^2}^2+\frac{3\gamma(1+\epsilon)|c|^2}{8}\|v\|_{L^2}^2+\frac{1}{2}c\cdot P(u,c)\\
    =&\frac{1}{2}\left\|\nabla \left(e^{-\frac{i}{2}c\cdot x}u\right)\right\|_{L^2}^2+\frac{1}{2}\left\|\nabla \left(e^{-\frac{i\gamma}{2}c\cdot x}v\right)\right\|_{L^2}^2\\
    &+\frac{\epsilon|c|^2}{8}\|u\|_{L^2}^2+\frac{3(1+\epsilon)\gamma|c|^2-\gamma^2|c|^2}{8}\|v\|_{L^2}^2.
\end{align*}
From Lemma \ref{lemma:scaling} and above, \eqref{global:1} is equivalent to 
\begin{align*}
    &S_{\frac{|\sqrt{1+\epsilon}c|^2}{4},c,0}\left(e^{\frac{i}{2}c\cdot x}u_0,e^{\frac{i\gamma}{2}c\cdot x}v_0\right)\notag\\
    =&\mathcal{Q}_{\frac{|\sqrt{1+\epsilon}c|^2}{4},c,0}\left(e^{\frac{i}{2}c\cdot x}u_0,e^{\frac{i\gamma}{2}c\cdot x}v_0\right)-D\left(e^{\frac{i}{2}c\cdot x}u_0,e^{\frac{i\gamma}{2}c\cdot x}v_0\right)\notag\\
    =&\frac{1}{2}\left\|\nabla u_0\right\|_{L^2}^2+\frac{1}{2}\left\|\nabla v_0\right\|_{L^2}^2\\
    &+\frac{\epsilon|c|^2}{8}\|u_0\|_{L^2}^2+\frac{3(1+\epsilon)\gamma|c|^2-\gamma^2|c|^2}{8}\|v_0\|_{L^2}^2\notag\\
    &-\int \frac{1}{36}|u_0|^4+\frac{9}{4}|v_0|^4+|u_0|^2|v_0|^2+\frac{1}{9}\Re\left(e^{\frac{i(\gamma-3)}{2}c\cdot x}\bar{u}_0^3(x)v_0(x)\right)\notag\\
    \leq&|c|^2\mu_{\frac{1+\epsilon}{4},\frac{c}{|c|},0}.
\end{align*}
That is,
\begin{align}\label{global:3}
    &\frac{1}{2}\left\|\nabla u_0\right\|_{L^2}^2+\frac{1}{2}\left\|\nabla v_0\right\|_{L^2}^2+\frac{\epsilon|c|^2}{8}\|u_0\|_{L^2}^2\notag\\
    &-\int \frac{1}{36}|u_0|^4+\frac{9}{4}|v_0|^4+|u_0|^2|v_0|^2+\frac{1}{9}\Re\left(e^{\frac{i(\gamma-3)}{2}c\cdot x}\bar{u}_0^3(x)v_0(x)\right)\notag\\
    \leq&|c|^2\left(\mu_{\frac{1+\epsilon}{4},\frac{c}{|c|},0}-\left(\frac{\gamma(3(1+\epsilon)-\gamma)}{8}\right)\|v_0\|_{L^2}^2\right).
\end{align}
By the Gagliardo-Nirenberg inequality, we have
\begin{align*}
    &\frac{1}{2}\left\|\nabla u_0\right\|_{L^2}^2+\frac{1}{2}\left\|\nabla v_0\right\|_{L^2}^2-\int \frac{1}{36}|u_0|^4+\frac{9}{4}|v_0|^4+|u_0|^2|v_0|^2\notag\\
    \geq& \frac{1}{2}\left\|\nabla u_0\right\|_{L^2}^2+\frac{1}{2}\left\|\nabla v_0\right\|_{L^2}^2-\frac{1}{2}\frac{M_{3\gamma}(u_0,v_0)}{M_{3\gamma}(Q^*,P^*)}(\left\|\nabla u_0\right\|_{L^2}^2+\left\|\nabla v_0\right\|_{L^2}^2)\notag\\
    =&\frac{1}{2}\left(1-\frac{M_{3\gamma}(u_0,v_0)}{M_{3\gamma}(Q^*,P^*)}\right)(\left\|\nabla u_0\right\|_{L^2}^2+\left\|\nabla v_0\right\|_{L^2}^2)>0,
\end{align*}
where in the last step we used the assumption $M_{3\gamma}(u_0,v_0)<M_{3\gamma}(Q^*,P^*)$, (see \eqref{con:initial}).

Let 
\begin{align*}
    A_0=\frac{8}{\gamma(3(1+\epsilon)-\gamma)}\mu_{\frac{1+\epsilon}{4},\frac{c}{|c|},0}.
\end{align*}
From Lemma \ref{lemma:scaling} and Remark \ref{remark:invariant}, $\mu_{\frac{1+\epsilon}{4},\frac{c}{|c|},0}$ is independent on $c$.
Then, by $\gamma<3$ and  Lemma \ref{lemma:b}, we get $A_0>0$. Let $\|v_0\|_{L^2}^2<A_0$, then 
\begin{align*}
    &|c|^2\left(\mu_{\frac{1+\epsilon}{4},\frac{c}{|c|},0}-\left(\frac{\gamma(3(1+\epsilon)-\gamma)}{8}\right)\|v_0\|_{L^2}^2\right)\to0,~~\text{as}~~|c|\to\infty.
\end{align*}

On the other hand, the Riemann-Lebesgue theorem implies that 
\begin{align}\label{global:4}
   \frac{1}{9}\int\Re\left(e^{\frac{i(\gamma-3)}{2}c\cdot x}\bar{u}_0^3(x)v_0(x)\right)\to 0~~\text{as}~~|c|\to\infty.
\end{align}
From \eqref{global:4}, we deduce that the left-hand side of \eqref{global:3} is bounded as $|c|$ is sufficiently large, since $0<\epsilon\leq \frac{1}{|c|^2}$. Therefore, \eqref{global:3} holds for $|c|$ large enough, and so does \eqref{global:1}.

Again, by \eqref{global:4}, we obtain
\begin{align*}
    &N_{\frac{|\sqrt{1+\epsilon}c|^2}{4},c,0}\left(e^{\frac{i}{2}c\cdot x}u_0,e^{\frac{i\gamma}{2}c\cdot x}v_0\right)\\
    =&2\mathcal{Q}_{\frac{|\sqrt{1+\epsilon}c|^2}{4},c,0}\left(e^{\frac{i}{2}c\cdot x}u_0,e^{\frac{i\gamma}{2}c\cdot x}v_0\right)-4D\left(e^{\frac{i}{2}c\cdot x}u_0,e^{\frac{i\gamma}{2}c\cdot x}v_0\right)\\
    =&\left\|\nabla u_0\right\|_{L^2}^2+\left\|\nabla v_0\right\|_{L^2}^2+\frac{\epsilon|c|^2}{4}\|u_0\|_{L^2}^2+\frac{3(1+\epsilon)\gamma|c|^2-\gamma^2|c|^2}{4}\|v_0\|_{L^2}^2\\
    &-4\left(\int \frac{1}{36}|u_0|^4+\frac{9}{4}|v_0|^4+|u_0|^2|v_0|^2+\frac{1}{9}\Re\left(e^{\frac{i(\gamma-3)}{2}c\cdot x}\bar{u}_0^3(x)v_0(x)\right)\right).
\end{align*}
Since $u_0,v_0\in H^1$, then there exists a constant $C_0>0$ such that $\|u_0\|_{L^4}, \|v_0\|_{L^4}\leq C_0 $. Hence, for large $|c|$ and $0<\gamma<3$,
\begin{align*}
   \frac{\gamma|c|^2(3(1+\epsilon)-\gamma)}{4}\|v_0\|_{L^2}^2-4\int \frac{1}{36}|u_0|^4+\frac{9}{4}|v_0|^4+|u_0|^2|v_0|^2\geq0
\end{align*}
holds.
Combining the above estimates, we can obtain that
\begin{align*}
    N_{\frac{|\sqrt{1+\epsilon}c|^2}{4},c,0}\left(e^{\frac{i}{2}c\cdot x}u_0,e^{\frac{i\gamma}{2}c\cdot x}v_0\right)\geq0
\end{align*}
for $|c|$ large enough. Then, \eqref{global:2} holds. Hence, (i) holds.

To prove (ii). 
Let $0<\epsilon\leq \frac{1}{|c|^2}$ and
\begin{align*}
   B_0=\frac{24}{(1+\epsilon)\gamma-3}\mu_{\frac{(1+\epsilon)\gamma}{12},\frac{c}{|c|},0}.
\end{align*}
If the initial data $(u_0,v_0)\in H^1(\mathbb{R}^2)\times H^1(\mathbb{R}^2)\backslash\{(0,0)\}$ satisfies $\|u_0\|_{L^2}^2<B_0$, then we can obtain, for $|c|$ large enough,
\begin{align*}
     &S_{\frac{(1+\epsilon)\gamma|c|^2}{12},c,0}\left(e^{\frac{i}{2}c\cdot x}u_0,e^{\frac{i\gamma}{2}c\cdot x}v_0\right)<\mu_{\frac{(1+\epsilon)\gamma|c|^2}{12},c,0},\\
    &N_{\frac{(1+\epsilon)\gamma|c|^2}{12},c,0}\left(e^{\frac{i}{2}c\cdot x}u_0,e^{\frac{i\gamma}{2}c\cdot x}v_0\right)\geq0.
\end{align*}
This means that, for $|c|$ large enough,
\begin{align*}
   \left(e^{\frac{i}{2}c\cdot x}u_0,e^{\frac{i\gamma}{2}c\cdot x}v_0\right)\in A^+_{\frac{(1+\epsilon)\gamma|c|^2}{12},c,0}.
\end{align*}
Hence, (ii) holds.

Now we complete the proof of Theorem \ref{Thm2}.
\end{proof}

\appendix
\section{Appendix }\label{appendix:a}
\begin{proof}[\bf Proof of Lemma \ref{Lemma:Pohozaev}.]
Multiplying the first equation in \eqref{equ:s} by $\bar{\phi}$, the second one by $\bar{\psi}$, integrating
over $\mathbb{R}^N$ and using integration by parts, we have
\begin{align*}
    \int|\nabla \phi|^2+\omega\int|\phi|^2+(ic\cdot\nabla \phi,\phi)-\frac{1}{9}\int|\phi|^2-2\int|\psi|^2|\phi|^2-\frac{1}{3}\int\bar{\phi}^3\psi=0,\\
    \int|\nabla \psi|^2+(3\gamma\omega-3\gamma+\alpha)\int|\psi|^2+(i\gamma c\cdot\nabla \psi,\psi)-9\int|\psi|^2-2\int|\psi|^2|\phi|^2-\frac{1}{9}\int\phi^3\bar{\psi}=0.
\end{align*}
Summing the above identities, we can obtain
\begin{align*}
    &\int|\nabla \phi|^2+\omega\int|\phi|^2+(ic\cdot\nabla \phi,\phi)+\int|\nabla \psi|^2+(3\gamma\omega-3\gamma+\alpha)\int|\psi|^2+(i\gamma c\cdot\nabla \psi,\psi)\notag\\
    =&\int \left(\frac{1}{9}|\phi|^4+9|\psi|^2+4\int|\psi|^2|\phi|^2+\frac{4}{9}\int\phi^3\bar{\psi}\right).
\end{align*}
Since
\begin{align*}
    (x\cdot\nabla u,ic\cdot\nabla u)=-\frac{N-1}{2}(u,ic\cdot\nabla u),~~(x\cdot\nabla u,-\Delta u)=-\frac{N-2}{2}\int|\nabla u|^2,\\
    (x\cdot\nabla u,u)=-\frac{N}{2}\int|u|^2,~~(x\cdot\nabla u,|u|^{p-1}u)=-\frac{N}{p+1}\int|u|^{p+1}.
\end{align*}
Multiplying the first equation in \eqref{equ:s} by $x\cdot\nabla\bar{\phi}$, the second one by $x\cdot\nabla\bar{\psi}$, integrating
over $\mathbb{R}^N$, using integration by parts and taking the real part, we deduce that 
\begin{align*}
    &-\frac{N-2}{2}\int|\nabla \phi|^2-\omega\frac{N}{2}\int|\phi|^2-\frac{N-1}{2}(\phi,ic\cdot\nabla \phi)\\
    &=-\frac{N}{36}\int|\phi|^4+2\Re\int|\psi|^2\phi x\cdot\nabla\bar{\phi}+\frac{1}{3}\Re\int\bar{\phi}^2\psi x\cdot\nabla\bar{\phi},
\end{align*}
and
\begin{align*}
    &-\frac{N-2}{2}\int|\nabla \psi|^2-(3\gamma\omega-3\gamma+\alpha)\frac{N}{2}\int|\psi|^2-\gamma\frac{N-1}{2}(\psi,ic\cdot\nabla \psi)\\
    =&-\frac{9N}{4}\int|\psi|^4+2\Re\int|\phi|^2\psi x\cdot\nabla\bar{\psi}+\frac{1}{9}\Re\int{\phi}^3x\cdot\nabla\bar{\psi}\\
    =&-\frac{9N}{4}\int|\psi|^4-2\Re\int|\psi|^2\phi x\cdot\nabla\bar{\phi}-N\int|\psi|^2|\phi|^2-\frac{1}{3}\Re\int\phi^2\bar{\psi}x\cdot\nabla\phi-\frac{N}{9}\Re\int\phi^3\bar{\psi}.
\end{align*}
From above two identities, we obtain
\begin{align*}
    &-\frac{N-2}{2}\int\left(|\nabla \phi|^2+|\nabla \psi|^2\right)-\frac{N}{2}\int\left(\omega|\phi|^2+3\gamma\omega|\psi|^2\right)-\frac{N}{2}(\alpha-3\gamma)\int|\psi|^2\\
    &-\frac{N-1}{2}\left[(\phi,ic\cdot\nabla \phi)+\gamma(\psi,ic\cdot\nabla \psi)\right]\\
    =&-\frac{N}{36}\int|\phi|^4-\frac{9N}{4}\int|\psi|^4-N\int|\psi|^2|\phi|^2-\frac{N}{9}\Re\int\phi^3\bar{\psi}.
\end{align*}
This completes the proof of Lemma \ref{Lemma:Pohozaev}.
\end{proof}

\section{Appendix}\label{section:appendix}

In this section, we aim to prove the sharp Gagliardo-Nirenberg-type inequality \eqref{GN:2}. Now, we give the following lemma.
\begin{lemma}
Let $1\leq N\leq3$, system \eqref{equ:elliptic2} has a ground state solution $(Q^*,P^*)$. Moreover, the following identities hold. 
\begin{align*}
    &\int|\nabla Q^*|^2+\int|Q^*|^2=\frac{1}{9}\int|Q^*|^4+2\int|Q^*|^2|P^*|^2,\\
    &\int|\nabla P^*|^2+3\gamma\int |P^*|^2=9\int|P^*|^4+2\int|Q^*|^2|P^*|^2,\\
    &\frac{N-2}{2}\int(|\nabla Q^*|^2+|\nabla P^*|^2)+\frac{N}{2}\int|Q^*|^2+\frac{3\gamma N}{2}\int |P^*|^2\\
    &-\frac{N}{4}\int\left(\frac{1}{9}|Q^*|^4+9|P^*|^4+4|Q^*|^2|P^*|^2\right)=0.
\end{align*}
\end{lemma}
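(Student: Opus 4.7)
The plan is to prove the lemma in three stages: first establish existence of a ground state $(Q^*, P^*)$ by Nehari-manifold minimization in the real-valued setting, then obtain the first two identities by testing the PDEs against $Q^*$ and $P^*$, and finally derive the Pohozaev identity by testing against $x\cdot\nabla Q^*$ and $x\cdot\nabla P^*$ exactly as in Appendix A for Lemma \ref{Lemma:Pohozaev}.

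For the existence step, introduce the real-valued functional
\[
\mathcal{S}^*(u,v) = \frac{1}{2}\int\left(|\nabla u|^2+|\nabla v|^2\right) + \frac{1}{2}\int u^2 + \frac{3\gamma}{2}\int v^2 - \int\left(\frac{1}{36}u^4+\frac{9}{4}v^4+u^2 v^2\right)
\]
on $H^1(\mathbb{R}^N)\times H^1(\mathbb{R}^N)$, together with its Nehari functional
\[
\mathcal{N}^*(u,v) = \int\left(|\nabla u|^2+|\nabla v|^2\right) + \int u^2 + 3\gamma\int v^2 - \int\left(\frac{1}{9}u^4+9v^4+4u^2 v^2\right),
\]
and minimize $\mathcal{S}^*$ over the Nehari set $\{(u,v)\neq(0,0):\mathcal{N}^*(u,v)=0\}$. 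Because the oscillating $\bar{u}^3 v$ coupling of \eqref{sys:complex} is absent from \eqref{equ:elliptic2}, the functional is real-valued and translation-invariant with no phase twisting; the chain of Lemmas \ref{lemma:a}--\ref{lemma:strong} transfers verbatim (in fact in simplified form), and Lions' concentration-compactness as in Lemma \ref{lemma:w} recovers a non-trivial weak limit along a sequence of translates. Sub-criticality of the cubic nonlinearity for $1\leq N\leq 3$ ensures the Sobolev control of the nonlinear terms. Passing to $(|Q^*|,|P^*|)$ and invoking the strong maximum principle, one may take the ground state to be non-negative (and positive on each non-trivial component).

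For the first two identities, multiply the first equation of \eqref{equ:elliptic2} by $Q^*$ and integrate; integration by parts on the Laplacian yields the first identity, and the analogous computation with the second equation tested against $P^*$ produces the second. For the Pohozaev identity, multiply the first equation by $x\cdot\nabla Q^*$ and the second by $x\cdot\nabla P^*$, integrate, and invoke the standard relations
\[
\int (x\cdot\nabla u)(-\Delta u) = -\frac{N-2}{2}\int|\nabla u|^2,\quad \int (x\cdot\nabla u)u = -\frac{N}{2}\int u^2,\quad \int (x\cdot\nabla u) u^3 = -\frac{N}{4}\int u^4,
\]
together with the coupling identity
\[
\int v^2 u\,(x\cdot\nabla u) + \int u^2 v\,(x\cdot\nabla v) = \frac{1}{2}\int x\cdot\nabla(u^2 v^2) = -\frac{N}{2}\int u^2 v^2,
\]
obtained by integration by parts exactly as in the proof of Lemma \ref{Lemma:Pohozaev}. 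Summing the two resulting identities and regrouping yields the third identity as stated.

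The main obstacle lies in the compactness step of the existence proof: translation invariance allows minimizing sequences to escape to infinity, so one must extract a non-trivial weak limit via the profile-decomposition argument of Lemma \ref{lemma:w} and then upgrade weak to strong convergence by ruling out loss of mass on the Nehari constraint, exactly as in Lemma \ref{lemma:strong}. Once the ground state is in hand, the three identities are routine integration-by-parts computations.
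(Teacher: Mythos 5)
Your proposal is correct, but it is worth noting that the paper does not actually prove this lemma: its entire ``proof'' is a one-line deferral to \cite{OP2021AMP} (``by the similar argument\ldots we omit it''). What you have written is a legitimate filling-in of that omission, and it follows the same standard variational route that the paper itself develops in Section \ref{section:2} for the harder complex system: Nehari-manifold minimization, Lions' concentration-compactness to recover a nontrivial weak limit modulo translations (as in Lemmas \ref{lemma:w} and \ref{lemma:strong}), and the symmetrization/maximum-principle step to obtain nonnegativity. Your derivation of the three identities checks out exactly: testing against $Q^*$ and $P^*$ gives the first two, and the Pohozaev computation with $x\cdot\nabla Q^*$, $x\cdot\nabla P^*$ together with the coupling identity $\int (P^*)^2 Q^*\,(x\cdot\nabla Q^*)+\int (Q^*)^2 P^*\,(x\cdot\nabla P^*)=-\frac{N}{2}\int (Q^*)^2(P^*)^2$ reproduces the third identity after summing, in parallel with the Appendix \ref{appendix:a} proof of Lemma \ref{Lemma:Pohozaev}. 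Two minor remarks. First, since the system \eqref{equ:elliptic2} has no sign-indefinite $\bar u^3 v$ coupling, you could bypass concentration-compactness entirely by working in the radial subspace after Schwarz symmetrization (all terms in $\mathcal{S}^*$ are monotone under rearrangement), which is closer to what \cite{OP2021AMP} does; your route works but is heavier than necessary. Second, the purpose of this lemma in the paper is to identify the sharp constant $C_{opt}^{(2)}=\frac{1}{2M_{3\gamma}(Q^*,P^*)}$, for which the paper minimizes the Weinstein quotient $J$ directly rather than $\mathcal{S}^*$ on the Nehari set; the two minimizations are equivalent up to scaling, so this is a difference of presentation, not of substance.
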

\begin{proof}
By the similar argument as \cite{OP2021AMP}, we can obtain this result. Here we omit it.
\end{proof}
\begin{proof}[\bf Proof of inequality \eqref{GN:2}.]
By the Gagliardo-Nirenberg inequality
\[\|f\|_{L^4}^4\leq C\|\nabla f\|_{L^2}^2\|f\|_{L^2}^2,\]
we have 
\begin{align}\notag
    \int\left(\frac{1}{36}|u|^4+\frac{9}{4}|v|^4+u^2v^2\right)\leq CK(u,v)M_{3\gamma}(u,v).
\end{align}
Now we define 
\begin{align}\notag
    J(u,v)=\frac{K(u,v)M_{3\gamma}(u,v)}{\int\left(\frac{1}{36}|u|^4+\frac{9}{4}|v|^4+u^2v^2\right)}.
\end{align}
The infimum of $J(u,v)$ is attained at a pair of real function $(Q^*,P^*)$, that is 
\[\inf J(u,v)=J(Q^*,P^*),\]
if and only if, up to scaling, $(Q^*,P^*)$ is a positive ground state solution of \eqref{equ:elliptic2}. In fact, this is true. A similar argument can be found in \cite{OP2021AMP}.

Hence, we can obtain that 
\begin{align*}
    \frac{1}{C_{opt}^{(2)}}=J(Q^*,P^*)=2M_{3\gamma}(Q^*,P^*).
\end{align*}
\end{proof}




\renewcommand{\proofname}{\bf Proof.}

\noindent
{\bf Acknowledgments.}

 This work was 
 supported by  the National Natural Science
Foundation of China (12301090) and Fundamental Research Program of Shanxi Province NO.202203021222126.
 Part of this work was done by Y.L. during his postdoctoral stay at Central China Normal University, and he would like to thank the professor Shuangjie Peng for fruitful discussions and constant encouragement.

\vspace{0.2cm}
\noindent
{\bf Data Availability}

We do not analyse or generate any datasets, because our work proceeds within a theoretical
and mathematical approach.

\vspace{0.2cm}
\noindent
{\bf Conflict of interest}

The authors declare no potential conflict of interest.


\vspace*{.5cm}



\bigskip


\begin{flushleft}
Yuan Li,\\
School of Mathematics and Statistics, Lanzhou University,
Lanzhou, 730000, Gansu, Peoples Republic of China,\\
School of Mathematics and Statistics, Central China Normal University, Wuhan, PR China\\
E-mail: liyuan2014@lzu.edu.cn
\end{flushleft}

\begin{flushleft}
Kai Wang,\\
College of Mathematics, Taiyuan University of Technology, Taiyuan,
PR China\\
E-mail: wangkai03@tyut.edu.cn
\end{flushleft}

\begin{flushleft}
Qingxuan Wang,\\
School of Mathematical Sciences,
Zhejiang Normal University,
Jinhua, 321004, Zhejiang,
PR China\\
E-mail: wangqx@zjnu.edu.cn
\end{flushleft}
\bigskip

\medskip

\end{document}